\documentclass[10pt]{amsart}
\usepackage{epsf,latexsym,amsmath,amssymb,amscd,datetime}
\usepackage{amsmath,amsthm,amssymb,enumerate,eucal,url,calligra,mathrsfs,bbm}
\usepackage{rotating} 

\usepackage{blkarray} 

\usepackage{tikz,scalerel}
\usetikzlibrary{arrows,automata,calc,math}
\usepackage{upgreek}

\usepackage{imakeidx}     

\usepackage[outdir=./]{epstopdf}

\usepackage{graphicx}
\newenvironment{jfnote}{ \bgroup \color{blue} }{\egroup}

\usepackage[T1]{fontenc}

\usepackage{relsize}
\usepackage{tikz}
\usetikzlibrary{matrix,arrows,decorations.pathmorphing}
\usepackage{tikz-cd}
\usetikzlibrary{cd}

\usepackage[pdftex,colorlinks,linkcolor=blue,citecolor=brown]{hyperref}

\newcommand{\ignore}[1]{}

\DeclareMathOperator{\WalksMult}{\WalksMult}
\DeclareSymbolFont{Greekletters}{OML}{iwona}{m}{it}
\DeclareSymbolFont{greekletters}{OML}{iwona}{m}{n} 
\DeclareMathSymbol{\sfigamma}{\mathord}{Greekletters}{"0D}
\DeclareMathSymbol{\sfngamma}{\mathord}{greekletters}{"0D}
\DeclareMathSymbol{\sfgamma}{\mathord}{Greekletters}{"0D}
\newcommand{\naturals}{{\mathbb N}}

\newcommand{\mec}[1]{{\bf {#1}}}	

\makeatletter
 \newcommand{\linkdest}[1]{\Hy@raisedlink{\hypertarget{#1}{}}}
\makeatother
\def\centerarc[#1](#2)(#3:#4:#5)
    { \draw[#1] ($(#2)+({#5*cos(#3)},{#5*sin(#3)})$) arc (#3:#4:#5); }

\newcommand{\oldStuff}[1]{}

\DeclareMathOperator{\SHom}{\mathscr{H}\text{\kern -3pt {\calligra\large om}}\,}

\usepackage{mathrsfs}
\usepackage{amssymb}
\usepackage{dsfont}
\usepackage{verbatim}
\usepackage{url}

\theoremstyle{plain}
\ifundef\chapter
  {\newtheorem{theorem}{Theorem}[section]}
  {\newtheorem{theorem}{Theorem}[section]}
\ifundef\chapter
  { }
  {}

\ifundef\chapter
  { }
  {}

\ifundef\chapter
  { }
  {\numberwithin{equation}{section}
   
  }

\ifundef\chapter
  { }
  {\numberwithin{figure}{chapter}
}

\newtheorem{lemma}[theorem]{Lemma}
\newtheorem{proposition}[theorem]{Proposition}

\newtheorem*{theorem*}{Theorem}  
\newtheorem*{lemma*}{Lemma}
\newtheorem*{corollary*}{Corollary}
\newtheorem*{proposition*}{Proposition}

\theoremstyle{definition}
\newtheorem{definition}[theorem]{Definition}

\newtheorem{xca}{Exercise}[section]

\newtheorem{example}[theorem]{Example}

\newtheorem{remark}[theorem]{Remark}

\newcommand{\reals}{{\mathbb R}}

\newcommand{\integers}{{\mathbb Z}}

\DeclareMathAlphabet{\mathcal}{OMS}{cmsy}{m}{n}

\newcommand\cD{\mathcal{D}}

\newcommand\cG{\mathcal{G}}

\newcommand\cN{\mathcal{N}}

\newcommand\cR{\mathcal{R}}

\newcommand\cU{\mathcal{U}}

\def\from{\colon}

\def\eqdef{\overset{\text{def}}{=}}

\def\implies{\Rightarrow}

\DeclareRobustCommand
  \rddots{\mathinner{\mkern1mu\raise\p@
    \vbox{\kern7\p@\hbox{.}}\mkern2mu
    \raise4\p@\hbox{.}\mkern2mu\raise7\p@\hbox{.}\mkern1mu}}

\newcommand\xhookrightarrow[2][]{\ext@arrow 0062{\hookrightarrowfill@}{#1}{#2}}
\def\hookrightarrowfill@{\arrowfill@\lhook\relbar\rightarrow}

\begin{document}

\title[Example] 
{Urschel Nodal Domains via Perturbation Theory}


\author{Joel Friedman}
\address{Department of Computer Science, 
        University of British Columbia, Vancouver, BC\ \ V6T 1Z4, CANADA.}
\curraddr{}
\email{{\tt jf@cs.ubc.ca}}
\thanks{Research of the fist two authors
supported in part by an NSERC grant.}

\author{Tong Ling}
\address{Department of Computer Science,
        University of British Columbia, Vancouver, BC\ \ V6T 1Z4, CANADA.}
\email{{\tt toling@student.ubc.ca}}

\author{Soumyajit Saha}
\address{Institut de Recherche Math\'{e}matique Avanc\'{e}e (IRMA), 7 rue Ren\'{e} Descartes, 67084 Strasbourg, France}
\email{{\tt soumyajit.saha@unistra.fr}}
\thanks{The work is partially funded by the European Union. The third named author would like to thank Institut de Recherche Math\'{e}matique Avanc\'{e}e (IRMA), France, and the ERC (grant Acronym = InSpeGMos, Number = 101096550) for supporting the work.}


\subjclass[2010]{Primary 05C50}

\keywords{}

\begin{abstract}
We prove several types of Courant nodal domain theorems for generalized
Laplacians on graphs, 
based on an invariant introduced by Urschel, which we call the
{\em Urschel number}, denoted ${\rm UN}(\mec f)$, of an eigenvector
$\mec f$.
We refine Urschel's invariant, and use perturbation techniques to
obtain some new results.
First, 
we show the existence of
mutually orthogonal eigenvectors, such that if the $k$-th eigenvalue
has multiplicity $m$, then for $0\le j\le m-1$,
${\rm UN}(\mec f_{k+j})\le k+\min(j,(m-1)-j)$.
Second, for a simple $k$-th eigenvalue, we classify the
zeroes of $\mec f_k$ as either {\em shallow} or {\em deep}; we obtain
a number of results that say, roughly speaking, the more
shallow vertices $\mec f_k$ has, the more control we have over our
new invariants based on Urschel's.

Our new invariants of an eigenvector, $\mec f_k$,
are a sequence of integers whose minimum
value is ${\rm UN}(\mec f_k)$ and whose maximum, denoted
${\rm UN}_{\max{}}(\mec f_k)$, is the maximum number of nodal domains of
any possible positive/negative signing or ``charge'' of
the zeroes of $\mec f_k$.
An example of our second type of result is that if
$\mec f_k$ has no deep vertices, then ${\rm UN}_{\max{}}(\mec f_k)\le k$.

We provide a number of examples to illustrate our main results, and
how they differ from the situation in analysis.  We also describe
a minor improvement of the Gladwell-Zhu theorem for an orthonormal
eigenbasis in the presence of eigenvalues of sufficient multiplicity.
\end{abstract}

\maketitle
\setcounter{tocdepth}{3}
\tableofcontents


\newcommand{\myred}{}

\section{Introduction}
\label{se_intro}

\newcommand{\MUN}{{\rm UN}_{\rm max}}

The main point of this article is to use perturbation theory
as a way of obtaining results of the type that Urschel
\cite{urschel} obtained; these concern nodal decompositions of
an eigenvector of a {\em generalized Laplacian} 
(also known as {\em Schr\"odinger operators})
on a graph.
Our results show that one can derive part (but not all) of Urschel's results
using perturbation theory;
we give an improvement of Urschel's results 
in the presence of multiple eigenvalues.
We also introduce a family of invariants based on Urschel's invariant,
and we obtain some new results on these invariants.
The proofs of our main theorems use only
very general facts about perturbation theory
and some simple
graph theory;
however, our proofs use algorithms
that involve matrix computations including pseudoinverses,
and hence may not be as simple as Urschel's
linear algebraic algorithm.

In any case, our perturbation theory sheds new light on
Urschel's work \cite{urschel}.

Let us recall the rough context.

If $f\from X\to\reals$ is a continuous map from a topological space
to the reals, then $f^{-1}(0)\subset X$ is called the
{\em nodal set of $f$}, and $X\setminus f^{-1}(0)$ --- or equivalently
$f^{-1}(\reals\setminus\{0\})$ --- decomposes into its connected components,
each of which is called a {\em nodal domain of $f$}.

Courant's nodal region theorem
says that if
$\cD\subset \reals^n$ is a domain with sufficiently smooth boundary,
and the eigenvalues of its Laplacian subject to Dirichlet conditions
are ordered 
$\lambda_1\le\lambda_2\le\cdots$,
then for any $k$, an eigenfunction, $f_k$, of eigenvalue $\lambda_k$,
satisfies
$$
{\rm ND}(f_k)\le k,
$$
where ${\rm ND}(f)$ denotes the number of nodal regions of $f$.

There is a lot of 
work in graph theoretic analogs of Courant's theorem
\cite{powers1988,
colinDeVerdiere1993,
friedman_geometric_aspects,
biyikoglu_et_al_textbook,
duval_reiner,
gladwell_zhu,
davies_et_al2001,
mckenzie_urschel,
urschel}.
The interest is in part due to
the useful heuristic to use $f_2$ (also called the Fiedler vector)
to produce a good graph separator;
see also references in the textbook
\cite{biyikoglu_et_al_textbook}
and in \cite{powers1988,biyikoglu_et_al_laa_2004},
which go back to Fiedler's work in the 1970's.

If $G=(V_G,E_G)$ is a graph, and
$\mec f\from V_G\to\reals$,\footnote{We use
  bold face for functions $\mec f\from V_G\to\reals$ and often refer to
  them as {\em vectors}.
  }
one can analogously
define the nodal sets, nodal domains,
and ${\rm ND}(\mec f)$; see Section~\ref{se_main}.
For reasons to become clear, we will use the terminology
{\em strong nodal domains} and use ${\rm SND}(\mec f)$ 
instead of ${\rm ND}(\mec f)$.
This notion was done ``geometrically'' in 
\cite{friedman_geometric_aspects}\footnote{
  The definitions in \cite{friedman_geometric_aspects} replace each
  edge with a unit length interval, and there nodal sets make sense
  and nodal regions are defined topologically.
  },
however most authors prefer to work discretely, i.e., where nodal
domains can be defined purely with graph theory.\footnote{
  However, when working purely with graph theory, then nodal sets don't
  make much sense, and one often has to appear to more involved
  machinery, including interlacing theorems and the Lemma~5 of
  Duval-Reiner \cite{duval_reiner}.
  }
A graph $G$ has a positive definite {\em graph Laplacian}, $\Delta_G$,
but one can define
a much broader class of {\em generalized Laplacians}.

The analog of Courant's theorem does not hold for graphs: indeed,
if $G$ is a star graph, then a ``typical'' eigenvector $\mec f_2$ of the
$\lambda_2$ graph Laplacian eigenvalue satisfies
$$
{\rm SND}(\mec f_2)=n-1.
$$
However, Friedman \cite{friedman_geometric_aspects} proved that
\begin{equation}\label{eq_friedmans_nodal_domain_theorem}
{\rm SND}(\mec f_k)\le k
\quad
\mbox{if $\mec f_k$ is nowhere zero,}
\end{equation} 
i.e., $f(v)\ne 0$ for all $v\in V_G$.
This is therefore a version of Courant's theorem for graphs.

To improve upon \eqref{eq_friedmans_nodal_domain_theorem} and get a
result that remains close to Courant's theorem,
one can define the notion of a {\em weak nodal domain}
of a function $\mec f\from V_G\to\reals$, and the number of weak nodal domains
of $\mec f$ which we denote ${\rm WND}(\mec f)$; see Section~\ref{se_main}.
This notion has the property that
$$
{\rm WND}(\mec f)\le {\rm SND}(\mec f),
$$
with equality whenever $\mec f$ doesn't vanish on any vertex.

Davies et al.\ \cite{davies_et_al2001} proved that
\begin{equation}\label{eq_weak_nodal_domain_courant_theorem}
{\rm WND}(\mec f_k)\le k
\end{equation} 
for any $k$-th eigenfunction, $\mec f_k$.
Moreover, this also holds for eigenvectors and eigenvalues of any
{\em generalized Laplacian} (see 
Definition~\ref{de_generalized_laplacian}).\footnote{
  Friedman worked only with Laplacians
  \cite{friedman_geometric_aspects}, but these results and
  proofs 
  hold for generalized Laplacians as well.
  }
This improves Friedman's result \eqref{eq_friedmans_nodal_domain_theorem},
since if $\mec f_k$ is nowhere zero, then
${\rm WND}(\mec f_k)={\rm SND}(\mec f_k)$.

Hence \eqref{eq_weak_nodal_domain_courant_theorem} is a close analog of
Courant's theorem.
Davies et al.\ \cite{davies_et_al2001} also proved that if $\lambda_k$
is an eigenvalue of multiplicity $m$, so that
\begin{equation}\label{eq_kth_eigenvalue_has_multiplicity_m_intro}
\lambda_{k-1}<\lambda_k=\lambda_{k+1}=\ldots=\lambda_{k+m-1}
<\lambda_{k+m},
\end{equation} 
then
\begin{equation}\label{eq_strong_nodal_domain_courant_theorem}
{\rm SND}(\mec f_k)\le k+m-1 ,
\end{equation} 
again for generalized Laplacians.

Hence Courant's theorem, ${\rm SND}(\mec f_k)\le k$,
holds if $\mec f_k$ is nowhere zero or if $\lambda_k$ is of multiplicity one
(but not on a star graph with $n\ge 4$ vertices).

We also remark that ${\rm WND}(\mec f)$ and ${\rm SND}(\mec f)$ depend only on
the subset of vertices where $\mec f$'s values are positive, negative, and
zero.

In \cite{urschel}, Urschel improves on
\eqref{eq_weak_nodal_domain_courant_theorem} as follows.
For a function, $\mec f\from V_G\to\reals$, a {\em signing of $\mec f$}
is a function $\tilde{\mec f}$ that (1) is nowhere zero, and
(2) agrees with the sign of $\mec f$ --- positive or negative 
--- wherever $\mec f$ is nonzero.
Urschel defined an invariant,
which we call the
{\em Urschel number} of a function $\mec f\from V_G\to\reals$, which is
\begin{equation}\label{eq_define_Urschel_number}
{\rm UN}(\mec f) = \min\bigl\{ {\rm SND}(\tilde{\mec f})\ \bigm| 
\ \mbox{$\tilde{\mec f}$ is a signing of $\mec f$} \bigr\}.
\end{equation} 
We easily see that
\begin{equation}\label{eq_Urschel_number_between_nodal_counts}
{\rm WND}(\mec f) \le {\rm UN}(\mec f) \le {\rm SND}(\mec f)
\end{equation} 
provided that $\mec f$ does not vanish on any connected component of $G$.
Urschel \cite{urschel}
proved that for the $k$-th eigenvalue of a generalized Laplacian,
there exist $k$-th eigenfunctions, $\mec f_k$, such that
\begin{equation}\label{eq_Urschels_theorem}
{\rm UN}(\mec f_k)\le k;
\end{equation} 
moreover, if the corresponding eigenvalue, $\lambda_k$, is of
multiplicity $m$, i.e., 
\eqref{eq_kth_eigenvalue_has_multiplicity_m_intro} holds,
then Urschel proved that 
the set of such $\mec f_k$ with Urschel number at most $k$ is of positive
measure in the $m$-dimensional space of eigenvectors with eigenvalue 
$\lambda_k$.
In view of \eqref{eq_Urschel_number_between_nodal_counts},
\eqref{eq_Urschels_theorem} improves 
\eqref{eq_weak_nodal_domain_courant_theorem}.

We remark that if we define the {\em maximum Urschel number}
of a function as
\begin{equation}\label{eq_define_maximum_Urschel_number}
\MUN(\mec f) \eqdef  \max\{ {\rm SND}(\tilde{\mec f}) \ | \mbox{$\tilde{\mec f}$ is a 
signing of $\mec f$} \},
\end{equation} 
then
for the star on $n\ge 3$ vertex,
we have that for {\em any} eigenvector, $\mec f_2$,
of the (classical) Laplacian,
$$
\MUN(\mec f)
\ge (n+1)/2 .
$$
Hence Urschel's result does not hold if we replace ${\rm UN}$ with
$\MUN{}$ in \eqref{eq_Urschels_theorem}.

We also remark that if $G$ is a star on $n\ge 5$ vertices, and
$\Delta_G$ is its Laplacian, then when an eigenfunction, $\mec f$, 
of $\lambda_2$
has $\lfloor (n-1)/2\rfloor$ 
positive values at its $n-1$ leaves and the rest negative,
$$
{\rm UN}(\mec f) \ge n/2.
$$
Hence Urschel's result does not hold for $\mec f$ in some codimension $0$
subset of $\Delta_G$ eigenfunctions of eigenvalue $\lambda_2$.

The above two paragraphs give senses in which one cannot improve
on Urschel's result.
One way to improve Urschel's result is to notice that for a graph, $G$,
and a function
$\mec f\from V_G\to\reals$, if $\mec f$ vanishes on 
$\ell$ vertices, then there are
essentially $2^\ell$ possible signings of $\mec f$,
$$
\tilde{\mec f}_1,\ldots,\tilde{\mec f}_{2^\ell}
$$
(where we identify two signings with the same positive and negative
value).
Arranging the $\tilde{\mec f}_i$ in increasing number of strong nodal domains,
$$
{\rm SND}(\tilde{\mec f}_1)\le {\rm SND}(\tilde{\mec f}_2)\le \cdots \le
{\rm SND}(\tilde{\mec f}_{2^\ell}),
$$
we can define
$$
{\rm UN}_i(\mec f) = {\rm SND}(\tilde{\mec f}_i);
$$
hence
$$
{\rm UN}(\mec f) = {\rm UN}_1(\mec f)
\quad
\MUN(\mec f) = {\rm UN}_{2^\ell}(\mec f).
$$

When $\lambda_k$ is a simple eigenvalue of a generalized Laplacian,
$M$, our results apply to ${\rm UN}_2(\mec f)$ and --- under a certain
condition --- to $\MUN(\mec f)$.
When $\lambda_k$ is a multiple eigenvalues of a generalized Laplacian,
$M$, we get improved Urschel number bounds on an orthonormal eigenbasis
of the eigenspace of $\lambda_k$ (this was left open by Urschel).

In this article we take a different approach to Urschel's theorem,
using perturbation theory; we have three results
concerning a generalized Laplacian, $M$, on a graph, $G$.
Namely, we consider $M(\epsilon)=M+\epsilon M_1$ for appropriately
chosen $M_1$ such that $M(\epsilon)$ is a generalized Laplacian for
$|\epsilon|$ sufficiently small.
Our perturbation theory does two things.

First, if $\lambda_k$ is a simple eigenvalue of $M$, then we can
always find an $M_1$ such that
perturbed eigenvector, $\mec f_k(\epsilon)$, of $M(\epsilon)$, will
be nonzero at some places where $\mec f_k$ is zero.
In fact, by applying this to $\epsilon>0$ and $\epsilon<0$,
$\mec f_k(\epsilon)$ yields two non-equivalent partial signings of $\mec f_k$
such that the Urschel set of $\mec f_k(\epsilon)$ is a proper subset of
that of $\mec f_k$.
Applying this repeatedly gives
two different signings of $\mec f_k$, say $\tilde{\mec f}_k,\hat{\mec f}_k$, 
such that
$$
{\rm SND}(\tilde{\mec f}_k)\le k
\quad\mbox{and}\quad
{\rm SND}(\hat{\mec f}_k)\le k.
$$
Hence
$$
{\rm UN}_2(\mec f_k)\le k.
$$
Hence perturbation theory ``chooses'' a signing of $\mec f_k$.
Also, perturbation theory applied with $\epsilon$ both positive and
negative gives us a ``double result,'' in this case producing
two signings with nodal domain number at most $k$.
[Urschel's method may also yield two such signings, but we are not
sure of this.]\footnote{
  At some point Urschel's algorithm gives all vertices in his
  $V_{\hat i_1}$ a positive charge (third paragraph, page~70 of
  \cite{urschel}); perhaps a negative charge would
  yield a different signing(?).
  }
  
Second, if $\lambda_k$ has multiplicity $m\ge 2$ in $M$, then we prove
that an appropriate $M_1$ will separate the $m$ eigenvalues 
($\lambda_k=\cdots=\lambda_{k+m-1}$) into
individual eigenvalues.  We use this to conclude that there
are orthogonal eigenvectors $\mec f_k,\ldots,\mec f_{k+m-1}$ of $M$
such that
\begin{equation}\label{eq_f_j_un_le_j_multiple_eigenvalue}
{\rm UN}(\mec f_j)\le j
\end{equation} 
for $k\le j\le m-1$.
However, because perturbation allows us to consider the eigenvalues/vectors
of $M(\epsilon)=M+\epsilon M_1$ for both positive and negative 
values of $\epsilon$, we will also conclude that
$$
{\rm UN}(\mec f_{k+m-1})\le k, 
\ \ {\rm UN}(\mec f_{k+m-2})\le k+1, 
\ \ldots 
$$
Hence
\begin{equation}\label{eq_new_result_for_multiple_eigenvalue}
\mbox{for $0\le i\le m-1$},\quad
{\rm UN}(\mec f_{k+i}) \le k + \min(i,m-1-i) .
\end{equation} 
Hence again we get a ``double result,'' which this time means that
we get an improved bound on the Urschel numbers in
\eqref{eq_new_result_for_multiple_eigenvalue} once $i\ge m/2$.
Hence in both perturbation theory results above,
taking 
$\epsilon>0$ and $\epsilon<0$ give a ``double result.''

We note that Gladwell and Zhu \cite{gladwell_zhu} proved that there exist 
orthonormal eigenfunctions
$\mec f_1,\mec f_2,\ldots$ of $M$ such that ${\rm SND}(\mec f_j)\le j$.
Hence \eqref{eq_new_result_for_multiple_eigenvalue} gives
a new result, albeit not a stronger result,
since we are using Urschel numbers rather than strong nodal domain
numbers.
However, our result \eqref{eq_new_result_for_multiple_eigenvalue} has
lead us to give a mild improvement of the Gladwell-Zhu result
in some special cases of eigenvalues of sufficient multiplicity.

Another perturbation 
result in the case of a simple eigenvalue $\lambda_k$ is that we 
can prove that
$$
{\rm UN}_{2^s}(\mec f_k)\le k
$$
provided that $\mec f_k$ has $s$ {\em shallow Urschel vertices}:
that is, we call a $v\in V_G$ an {\em Urschel vertex} of $\mec f_k$
if $\mec f_k(v)=0$, and we subdivide Urschel vertices into
{\em shallow Urschel vertices} and {\em deep Urschel vertices}.
In particular, if all Urschel vertices are shallow, then
$$
\MUN(\mec f_k)\le k .
$$
We shall define this notion formally in
Section~\ref{se_main} and give a number of examples
of graphs and eigenvectors 
[However, this does not hold for a star on $n\ge 4$ vertices, for then
$\lambda_2$ is a multiple eigenvalue.]

The rest of this paper is organized as follows.
In Section~\ref{se_main} we give formal definitions and state
our main theorems.
In Sections~\ref{se_urschel_examples},
\ref{se_ladder_again}, and~\ref{se_shallow_deep} 
we will give some examples of
graphs where Urschel's theorems and ours are tight.
These three sections are not
necessary to the proofs of our main theorems; however, they provide
good intuition regarding our main theorems.
In Section~\ref{se_perturbation} we will review all the
perturbation theory we need for the proofs of our main theorems.
In Sections~\ref{se_gen_Laplacian_perturb}
and~\ref{se_gen_Laplacian_perturb_multiple} we prove our main theorems.
In Section~\ref{se_concluding}
we give some concluding remarks, including some minor
improvements to the Gladwell-Zhu result above when an eigenvalue
has sufficient multiplicity (depending on its position).
Finally, in Appendix~\ref{ap_details_of_ladders_and_split_paths} we prove
that ``ladders'' and ``paths with a double left end''
have the eigenvalues we claim they have
in Section~\ref{se_urschel_examples}.

We wish to thank Nalini Anantharaman for conversions regarding
Urschel's work and our methods.

\section{Main Results}
\label{se_main}

In this section we formalize some notation and summarize our main results.  

\subsection{Basic Notation}

We use $\reals$ to denote the reals.
If $A$ is a set, then $|A|$ denotes the cardinality of $A$,
and $\reals^A$ denotes the $\reals$-vector space of maps $A\to\reals$.
For $a\in A$ we use $\mec e_a\in\reals^A$ to denote the standard basis
vector at $a$, i.e., $\mec e_a(a')$ is $1$ if $a'=a$, and $0$ if
$a'\ne a$.
For $n\in\naturals=\{1,2,\ldots\}$, $[n]$ denotes $\{1,2,\ldots,n\}$,
and $\reals^n$ denotes $\reals^{[n]}$.

\newcommand{\realBA}[2]{\reals^{#1\subset#2}}
\newcommand{\realBVG}[1]{\reals^{#1\subset V_G}}

If $A$ is a set, we generally use bold face for vectors in $\reals^A$,
e.g.,
$\mec u\in\reals^A$.

If $B\subset A$, we use the notation
\begin{equation}\label{eq_reals_B_subset_A}
\realBA{B}{A} = {\rm Span}\{\mec e_b \ | \ b\in B\} \subset \reals^A ,
\end{equation} 
i.e.,
$$
\realBA{B}{A} = \{ \mec u\in\reals^A \ | \ 
\mbox{$\mec u(b')=0$ for $b'\in A\setminus B$} \}.
$$

If $A,B$ are sets, we use $\reals^{A\times B}$ to denote the set
of $A\times B$ real matrices, which can be viewed as functions
$A\times B\to\reals$, and for $a\in A$, $b\in B$,
we use $M(a,b)$ to denote the $(a,b)$-entry of $M$.
$M\in\reals^{A\times A}$ is {\em symmetric} if for all $a,a'\in A$,
$M(a,a')=M(a',a)$, i.e., $M^{\rm T}=M$ where $M^{\rm T}$ denotes the
transpose of $M$.

\subsection{Conventions on Graphs}

In this paper, unless specified otherwise, all graphs will be
finite, simple graphs.
Hence a graph is a pair
$G=(V_G,E_G)$, where 
$V_G$ is a finite set,
and $E_G$ is a set of unordered pairs $\{u,v\}$ with $u,v\in V_G$.
Hence we have $|E_G|\le\binom{|V_G|}{2}$.

Let $G=(V_G,E_G)$ be a simple graph.  We write $v\sim v'$ and say
{\em $v$ and $v'$ are adjacent} to mean
that $v,v'\in V_G$ and $\{v,v'\}\in E_G$.
A {\em walk in $G$}
is a nonempty sequence of vertices $(v_1,v_2,\ldots,v_m)$ such that
$v_i\sim v_{i+1}$ for all $i\in[m-1]$;
we refer to $v_1,v_m$ as the {\em endpoints} of the walk.
A {\em path} is a walk with distinct vertices.
We will use other common graph theory terminology, including:
induced subgraph, connected components, and adjacency matrix, $A_G$, of $G$;
we refer the reader to textbooks such as 
\cite{cioaba_murty,biyikoglu_et_al_textbook}.

\subsection{Matricies Supported on a Graph, Generalized Laplacians, and
Eigenvalues}

\begin{definition}\label{de_generalized_laplacian}
Let $G=(V_G,E_G)$ be a graph.  A matrix $M\in\reals^{V_G\times V_G}$
is {\em supported on $G$} if $M(u,v)\ne 0$ implies that either
$u=v$ or $\{u,v\}\in E_G$.
A {\em generalized Laplacian on $G$} refers to any symmetric matrix 
$M\in\reals^{V_G\times V_G}$ 
that is supported on $G$ and such that
for all $\{u,v\}\in E_G$ we have $M(u,v)<0$.
If $M$ is a generalized Laplacian on $G$, 
then since $M$ is symmetric it has $n$ real eigenvalues, which
we will arrange 
in non-decreasing order
\begin{equation}
\label{eq_eigenvalues_increasing_order}
\lambda_1=\lambda_1(M)\le 
\lambda_2=\lambda_2(M) \le \cdots \le 
\lambda_n=\lambda_n(M) .
\end{equation}
\end{definition}
If $\lambda$ is an eigenvalue of $M$, we use
$E(M,\lambda)$ to denote $\ker(M-I\lambda)$, i.e.,
the space of eigenvectors of $M$ with eigenvalue
$\lambda$.

At times we use $E$ to denote an arbitrary subspace of $\reals^{V_G}$
or $\reals^n$;
we do this when our intended application is a subspace of the form 
$E(M,\lambda)$, but the discussion holds for a general subspace $E$.


\begin{example}
The classical Laplacian, $\Delta_G$, is an important example
of a generalized Laplacian, given by
$\Delta_G=D_G-A_G$, where $A_G$ is the adjacency matrix of $G$,
and $D_G$ is the diagonal degree counting matrix, i.e.,
whose $(v,v)$ entry is $\deg_G(v)$, i.e., the degree of $v$ in $G$, 
i.e., the number of vertices in $G$ adjacent to $v$.
\end{example}

\subsection{(Discrete) Nodal Domains}

Let $G=(V_G,E_G)$ be a simple graph.  If $\mec f\in \reals^{V_G}$, then
we use $\mec f^{-1}_{>0}$ to denote
$$
\mec f^{-1}_{>0} = \{ v\in V_G \ | \ \mec f(v)>0 \},
$$
and similarly with $>0$ replaced with $\ge 0$, $<0$, $\le 0$, $=0$, $\ne 0$.
A {\em strong nodal domain} is a connected component of the
the subgraph of $G$ induced on the vertex set $\mec f^{-1}_{>0}$ or on
$\mec f^{-1}_{<0}$; we use ${\rm SND}(\mec f)$ --- the {\em strong nodal
domain number of $\mec f$} --- to denote the number of such
connected components; this is therefore the number of connected components
of the subgraph of $G$ obtained by discarding
(1) all vertices $u$ where $\mec f(u)=0$, and
(2) all edges $\{u,v\}$
where $\mec f(u),\mec f(v)$ are not both positive or both negative.
See Figure~\ref{fi_nodal_domains_examples}.

\begin{figure}[ht]
$$
\begin{array}{cc}
\begin{tikzpicture}[scale=0.25]
\filldraw (0,4) circle (5pt);
\node at (0,5.5) {$0$};
\filldraw (-6,0) circle (5pt);
\draw(0,4) to (-6,0);
\node at (-6,-1.5) {$+$};
\filldraw (-2,0) circle (5pt);
\draw(0,4) to (-2,0);
\node at (-2,-1.5) {$+$};
\filldraw (2,0) circle (5pt);
\draw(0,4) to (2,0);
\node at (2,-1.5) {$-$};
\filldraw (6,0) circle (5pt);
\draw(0,4) to (6,0);
\draw(-6,0) to (2,0);
\node at (6,-1.5) {$-$};
\node at (0,-3) {\Small A graph with the sign};
\node at (0,-4.2) {\Small pattern of a function};
\end{tikzpicture}
\quad & \quad
\begin{tikzpicture}[scale=0.25]
\filldraw (-6,0) circle (5pt);
\node at (-6,-1.5) {$+$};
\filldraw (-2,0) circle (5pt);
\node at (-2,-1.5) {$+$};
\filldraw (2,0) circle (5pt);
\node at (2,-1.5) {$-$};
\filldraw (6,0) circle (5pt);
\node at (6,-1.5) {$-$};
\draw (-6,0) to (-2,0);
\node at (0,-3) {\Small Strong nodal domains};
\end{tikzpicture}
\\
\begin{tikzpicture}[scale=0.25]
\filldraw (0,4) circle (8pt);
\node at (0,5.5) {$0$};
\filldraw (-6,0) circle (8pt);
\draw(0,4)[very thick] to (-6,0);
\draw (-6,0)[very thick] to (-2,0);
\node at (-6,-1.5) {$+$};
\filldraw (-2,0) circle (8pt);
\draw(0,4)[very thick] to (-2,0);
\node at (-2,-1.5) {$+$};
\filldraw (2,0) circle (5pt);
\draw(0,4) to (2,0);
\node at (2,-1.5) {$-$};
\filldraw (6,0) circle (5pt);
\draw(0,4) to (6,0);
\node at (6,-1.5) {$-$};
\node at (0,-3) {\Small One weak nodal domain};
\end{tikzpicture}
\quad & \quad
\begin{tikzpicture}[scale=0.25]
\filldraw (0,4) circle (8pt);
\node at (0,5.5) {$0$};
\filldraw (-6,0) circle (5pt);
\draw(0,4) to (-6,0);
\node at (-6,-1.5) {$+$};
\filldraw (-2,0) circle (5pt);
\draw(0,4) to (-2,0);
\node at (-2,-1.5) {$+$};
\filldraw (2,0) circle (8pt);
\draw(0,4)[very thick] to (2,0);
\node at (2,-1.5) {$-$};
\filldraw (6,0) circle (8pt);
\draw(0,4)[very thick] to (6,0);
\node at (6,-1.5) {$-$};
\node at (0,-3) {\Small Another weak nodal domain};
\end{tikzpicture}
\end{array}
$$
\caption{Depicted is a function on a graph; nodal domains care only
about the sign of a function, so we indicate this with $+,-,0$.
A {\em strong nodal domain} is a connected component of the graph obtained
by delete all vertices of sign $0$, and
all edges except those where the endpoints have the same sign.
We therefore get $3$ strong nodal domains.
A {\em weak nodal domain} is a connected component of the induced subgraph
on the vertices with $+,0$ function values, or one with values $-,0$.
Hence a weak nodal domain can combine strong nodal domains through vertices
of value $0$.
We depict the two weak nodal domains using larger vertices and edges.
Any function, $\mec f$, with the indicated sign pattern therefore has
${\rm SND}(\mec f)=3$, and ${\rm WND}(\mec f)=2$.
This is the notion of nodal domains used by most authors; 
Friedman \cite{friedman_geometric_aspects} keeps all edges, extends functions
linearly along the edges, and uses the notion of nodal domains akin to
that in analysis.
Notice that if $\mec f$ is nowhere zero, then there is no difference between
a weak and strong nodal domain of $\mec f$, since it is only vertices where
$\mec f$ is zero that allows strong nodal domains to coalesce into weak
nodal domains.
}
\label{fi_nodal_domains_examples}
\end{figure}

Notice that strong nodal domains care only about the sign, $+,-,0$
(i.e., positive, negative, and zero), of the function, not its particular
values.

Similarly, a
{\em weak nodal domain} is a connected component of the
the subgraph of $G$ induced on the vertex set $\mec f^{-1}_{\ge 0}$ or on
$\mec f^{-1}_{\le 0}$ 
that contains at least one vertex, $v$, at which
$\mec f(v)\ne 0$;
we use ${\rm WND}(\mec f)$ --- the {\em weak nodal
domain number of $\mec f$} --- to denote the number of such
connected components;
see Figure~\ref{fi_nodal_domains_examples} and see
Definitions~1 and~2 in \cite{davies_et_al2001}.\footnote{
  One has to be careful with this definition.
  }

Note that if a weak nodal domain contains one vertex of a strong
nodal domain, then it contains the entire strong nodal domain.
Hence each weak nodal domain is the union of one or more strong nodal domains,
plus any paths whose
interior vertices, $u$, satisfy
$\mec f(u)=0$;
therefore
$$
{\rm WND}(\mec f) \le {\rm SND}(\mec f) ,
$$
and equality always holds when $\mec f$ is everywhere nonzero
(i.e., $\mec f(v)\ne 0$ for all $v\in V_G$), since then 
$\mec f^{-1}_{>0}=\mec f^{-1}_{\ge 0}$, and 
$\mec f^{-1}_{<0}=\mec f^{-1}_{\le 0}$;
see Figure~\ref{fi_nodal_domains_examples}.

Notice that weak nodal domains --- like strong nodal domains ---
care only about the sign, $+,-,0$
(i.e., positive, negative, and zero), of the function at the vertex
set, not its particular
values.

If $\mec f_k$ is a $k$-th Laplacian eigenfunction of a graph, $G$,
then there are a number of
papers regarding upper bounds on ${\rm WND}(\mec f)$ and ${\rm SND}(\mec f)$;
in particular \cite{davies_et_al2001} results
\eqref{eq_weak_nodal_domain_courant_theorem}
and
\eqref{eq_strong_nodal_domain_courant_theorem}.

In this article we will study a third invariant, the {\em Urschel
number of $\mec f$} of a function $\mec f\from V_G\to\reals$.

\begin{definition}
Let $G$ be a simple graph, and $\mec f\from V_G\to \reals$ a function.
A {\em partial signing of $\mec f$} is any function 
$\tilde{\mec f}\from V_G\to\reals$
such that 
\begin{align*}
\forall v\in V_G, \quad 
& \mec f(v)>0 \ \implies\ \tilde{ \mec f}(v)>0 \\
\mbox{and}\quad & \mec f(v)<0 \ \implies\ \tilde{\mec f}(v)<0 ;
\end{align*}
we say that two partial signings are {\em equivalent}
if at each vertex their values are either both positive, both negative,
or both zero.
A {\em signing of $\mec f$} is a partial signing $\tilde{ \mec f}$ that is nonzero
at all vertices.
We use $\mbox{Signings}(\mec f)$ to denote the set of all signings of $\mec f$,
up to equivalence; hence, if $\mec f$ is zero at $\ell$ vertices, then
$\mbox{Signings}(\mec f)$ is a set of size $2^\ell$.
We define the {\em Urschel number of $\mec f$} to be
\begin{equation}\label{eq_Urschel_nodal_domain_number}
\mbox{UN}(\mec f) 
\eqdef \min \{ {\rm WND}(\tilde{\mec f}) | \tilde{\mec f}\in{\rm Signings}(\mec f) \}
= \min
\{ {\rm SND}(\tilde{\mec f}) | \tilde{\mec f}\in{\rm Signings}(\mec f) \} ,
\end{equation} 
which makes sense since WND and SND depend only the equivalence class
of a signing
(the equality in \eqref{eq_Urschel_nodal_domain_number} follows since 
${\rm WND}(\tilde{\mec f})={\rm SND}(\tilde{\mec f})$
for any nowhere zero function $\tilde{\mec f}\from V_G\to\reals_{\ne 0}$).
\end{definition}
See Figure~\ref{fi_Urschel_number} for an illustration.

\begin{figure}[ht]
$$
\begin{array}{cc}
\begin{tikzpicture}[scale=0.25]
\filldraw (0,4) circle (5pt);
\node at (0,5.5) {$0$};
\filldraw (-6,0) circle (5pt);
\draw(0,4) to (-6,0);
\node at (-6,-1.5) {$+$};
\filldraw (-2,0) circle (5pt);
\draw(0,4) to (-2,0);
\node at (-2,-1.5) {$+$};
\filldraw (2,0) circle (5pt);
\draw(0,4) to (2,0);
\node at (2,-1.5) {$-$};
\filldraw (6,0) circle (5pt);
\draw(0,4) to (6,0);
\node at (6,-1.5) {$-$};
\node at (0,-3) {\Small A star with the sign};
\node at (0,-4.2) {\Small pattern of a function};
\end{tikzpicture}
\quad & \quad
\begin{tikzpicture}[scale=0.25]
\filldraw (0,4) circle (5pt);
\node at (0,5.5) {$+$};
\filldraw (-6,0) circle (5pt);
\draw(0,4) to (-6,0);
\node at (-6,-1.5) {$+$};
\filldraw (-2,0) circle (5pt);
\draw(0,4) to (-2,0);
\node at (-2,-1.5) {$+$};
\filldraw (2,0) circle (5pt);
\draw(0,4) to (2,0);
\node at (2,-1.5) {$-$};
\filldraw (6,0) circle (5pt);
\draw(0,4) to (6,0);
\node at (6,-1.5) {$-$};
\node at (0,-4.2) {\Small One signing of the function};
\end{tikzpicture}
\\
\begin{tikzpicture}[scale=0.25]
\filldraw (0,4) circle (5pt);
\node at (0,5.5) {$+$};
\filldraw (-6,0) circle (5pt);
\draw(0,4) to (-6,0);
\node at (-6,-1.5) {$+$};
\filldraw (-2,0) circle (5pt);
\draw(0,4) to (-2,0);
\node at (-2,-1.5) {$+$};
\filldraw (2,0) circle (5pt);
\node at (2,-1.5) {$-$};
\filldraw (6,0) circle (5pt);
\node at (6,-1.5) {$-$};
\node at (0,-3) {\Small The strong (and weak) nodal};
\node at (0,-4.2) {\Small domains of the first signing};
\end{tikzpicture}
\quad & \quad
\begin{tikzpicture}[scale=0.25]
\filldraw (0,4) circle (5pt);
\node at (0,5.5) {$-$};
\filldraw (-6,0) circle (5pt);
\node at (-6,-1.5) {$+$};
\filldraw (-2,0) circle (5pt);
\node at (-2,-1.5) {$+$};
\filldraw (2,0) circle (5pt);
\draw(0,4) to (2,0);
\node at (2,-1.5) {$-$};
\filldraw (6,0) circle (5pt);
\draw(0,4) to (6,0);
\node at (6,-1.5) {$-$};
\node at (0,-3) {\Small A second signing and};
\node at (0,-4.2) {\Small its nodal domains};
\end{tikzpicture}
\end{array}
$$
\caption{Depicted is a function on a star graph.  There are two signings
of this function, each with 3 nodal domains, and hence the Urschel number
of this function is 3.
Notice that for any signing, or any nowhere 0 function, the number of
strong and weak nodal domains are the same.
}
\label{fi_Urschel_number}
\end{figure}

It is not hard to see\footnote{
  Urschel didn't formally prove this, so we do so here:
  the proof is an easy argument on induction on the number of vertices,
  $k$, at which $\mec f$ is zero.  In the base case $k=0$, clearly 
  \eqref{eq_weak_less_than_Urschel_less_than_strong} 
  holds with equality everywhere.
  For the inductive claim, we take any $u$ with $\mec f(u)=0$; by assumption
  there is a path from $u$ to a vertex $v$ with $\mec f(v)\ne 0$; hence there
  are adjacent $u',v'$ along this path where $\mec f(u')=0$ and $\mec f(v')\ne 0$;
  we let $\tilde{\mec f}$ be the partial signing that agrees everywhere with
  $\mec f$ except at $u'$ where we set $\tilde{\mec f}(u')=\mec f(v)$.  
  It is an easy exercise to check that ${\rm WND}(\mec f)\le{\rm WND}(\tilde{\mec f})$ 
  and
  ${\rm SND}(\tilde{\mec f})\le{\rm SND}(\mec f)$.
  Now we use induction. 
  }
\begin{equation}\label{eq_weak_less_than_Urschel_less_than_strong}
{\rm WND}(\mec f)\le {\rm UN}(\mec f) \le {\rm SND}(\mec f)
\end{equation} 
provided that $\mec f$ is nowhere zero on each 
connected component of $G$,\footnote{
  If $\mec f$ is the zero function, then ${\rm SND}(\mec f)=0$ and
  ${\rm UN}(\mec f)$ is the number of connected components of $G$.
  Hence it can happen that ${\rm UN}(\mec f)>{\rm SND}(\mec f)$ without assumptions
  on $\mec f$.
  }
and, moreover, equality holds
(in both inequalities in \eqref{eq_weak_less_than_Urschel_less_than_strong})
when $\mec f$ is nonzero at all vertices.\footnote{
  since then ${\rm WND}(\mec f)={\rm SND}(\mec f)$.
  }

\subsection{Refined Urschel Numbers}

Here we recall the definition of ${\rm UN}_i(\mec f)$ given in 
Section~\ref{se_intro}.
Say that 
$\mec f\from V_G\to\reals$ and that $\mec f$ vanishes on
$\ell$ vertices.  Then ${\rm Signings}(\mec f)$ consists of
$2^\ell$ equivalence classes of signings of $\mec f$.
Choose a representative of each:
$$
\tilde{\mec f}_1,\ldots,\tilde{\mec f}_{2^\ell}
$$
which we
arrange in increasing number of strong nodal domains,
i.e.,
$$
{\rm SND}(\tilde{\mec f}_1)\le {\rm SND}(\tilde{\mec f}_2)\le \cdots \le
{\rm SND}(\tilde{\mec f}_{2^\ell}).
$$
We define the {\em $i$-th Urschel number of $\mec f$} to be
$$
{\rm UN}_i(\mec f) = {\rm SND}(\tilde{\mec f}_i);
$$
hence
$$
{\rm UN}(\mec f) = {\rm UN}_1(\mec f).
$$
We also define the {\em maximum Urschel number of $\mec f$} to be
$$
\MUN(\mec f) = {\rm UN}_{2^\ell}(\mec f),
$$
or, in other words,
$$
\MUN(\mec f)
= 
\max \{ {\rm WND}(\tilde{\mec f}) | \tilde{\mec f}\in{\rm Signings}(\mec f) \}
= \max
\{ {\rm SND}(\tilde{\mec f}) | \tilde{\mec f}\in{\rm Signings}(\mec f) \} .
$$
In Figure~\ref{fi_i_th_Urschel_number_example},
we illustrate the $i$-th Urschel numbers of a function
$\mec f$ with two zeroes.

\begin{figure}[ht]
$$
\begin{array}{cc}
\begin{tikzpicture}[scale=0.25]
\filldraw (-2,0) circle (5pt);
\node at (-2,-1.5) {$+$};
\filldraw (2,0) circle (5pt);
\node at (2,-1.5) {$0$};
\filldraw (6,0) circle (5pt);
\node at (6,-1.5) {$0$};
\draw(-2,0) to (6,0);
\node at (2,-3.2) {\Small A function $\mec f$ on a path graph.};
\node at (2,-4.4) {\Small We indicate the sign pattern of $\mec f$.};
\end{tikzpicture}
& 
\begin{tikzpicture}[scale=0.25]
\filldraw (-2,0) circle (5pt);
\node at (-2,-1.5) {$+$};
\filldraw (2,0) circle (5pt);
\node at (2,-1.5) {$+$};
\filldraw (6,0) circle (5pt);
\node at (6,-1.5) {$+$};
\draw(-2,0) to (6,0);
\node at (2,-3.2) {\Small ${\rm UN}(\mec f)={\rm UN}_1(\mec f)=1$.};
\end{tikzpicture}
\end{array}
$$
$$
\begin{array}{ccc}
\begin{tikzpicture}[scale=0.25]
\filldraw (-2,0) circle (5pt);
\node at (-2,-1.5) {$+$};
\filldraw (2,0) circle (5pt);
\node at (2,-1.5) {$+$};
\filldraw (6,0) circle (5pt);
\node at (6,-1.5) {$-$};
\draw(-2,0) to (6,0);
\node at (2,-3.2) {\Small ${\rm UN}_2(\mec f)=2$.};
\end{tikzpicture}
&
\begin{tikzpicture}[scale=0.25]
\filldraw (-2,0) circle (5pt);
\node at (-2,-1.5) {$+$};
\filldraw (2,0) circle (5pt);
\node at (2,-1.5) {$-$};
\filldraw (6,0) circle (5pt);
\node at (6,-1.5) {$-$};
\draw(-2,0) to (6,0);
\node at (2,-3.2) {\Small ${\rm UN}_3(\mec f)=2$.};
\end{tikzpicture}
&
\begin{tikzpicture}[scale=0.25]
\filldraw (-2,0) circle (5pt);
\node at (-2,-1.5) {$+$};
\filldraw (2,0) circle (5pt);
\node at (2,-1.5) {$-$};
\filldraw (6,0) circle (5pt);
\node at (6,-1.5) {$+$};
\draw(-2,0) to (6,0);
\node at (2,-3.2) {\Small $\MUN(\mec f)={\rm UN}_4(\mec f)=3$.};
\end{tikzpicture}
\end{array}
$$
\caption{Depicted is a function, $\mec f$ on a ``path graph'' with two
zeros.  $\mec f$ therefore has four signings, and we have
${\rm UN}_1(\mec f)=1$
${\rm UN}_2(\mec f)={\rm UN}_3(\mec f)=2$, and 
${\rm UN}_4(\mec f)=3$.
}
\label{fi_i_th_Urschel_number_example}
\end{figure}

\subsection{Nodal Domain Numbers of Eigenvectors of Laplacians and
Generalized Laplacians}

There is a large literature regarding the number of nodal domains
of Laplacian and generalized Laplacian eigenvectors, and in
particular analog's of Courant's theorem, that states that if
$D\subset\reals^n$ is a domain, with Dirichlet Laplacian eigenvalues
$\lambda_1\le\lambda_2\le \cdots$ and corresponding
mutually orthogonal eigenfunctions $f_1,f_2,\ldots$,
then $f_k$ has at most $k$ nodal domains;
see 
\cite{friedman_geometric_aspects,
colinDeVerdiere1993,duval_reiner,davies_et_al2001,gladwell_zhu},
and many earlier references in
\cite{biyikoglu_et_al_laa_2004,biyikoglu_et_al_textbook},
beginning with the pioneering work of Fiedler in the early 1970's.

In this article, 
we will use the following version of Courant's
theorem.

\begin{theorem}\label{th_friedmans_nodal_region_theorem}
Let $M$ be a generalized Laplacian of a graph, $G$, and
let
$\mec f_1,\mec f_2,\ldots,\mec f_n$ be an orthonormal set of 
eigenvectors corresponding
to eigenvalues $\lambda_1\le \lambda_2\le \cdots\le \lambda_n$
of $M$.
Then if $\mec f_k$ is nowhere zero, then $\mec f_k$ has at most $k$ nodal
regions.
\end{theorem}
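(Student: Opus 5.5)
Suppose, for contradiction, that $\mec f=\mec f_k$ is nowhere zero and has $r\ge k+1$ strong nodal domains $D_1,\ldots,D_r$; since $\mec f$ is nowhere zero, these are also its weak nodal domains. The plan is to build a test function that is a combination of restrictions of $\mec f$ to these domains, bound its Rayleigh quotient by $\lambda_k$, and then derive a contradiction from the fact that $M(u,v)<0$ on the edges of $G$.

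\textbf{Step 1: the restricted functions.} For each $i$, let $\mec g_i$ agree with $\mec f$ on $D_i$ and vanish elsewhere. Each vertex of $D_i$ has $\mec f$ of the same sign, and neighbours of $D_i$ outside $D_i$ must have the opposite sign, since otherwise they would belong to the same domain. Hence any edge joining two different domains has endpoints of opposite sign.

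\textbf{Step 2: the quadratic form.} I will compute $\langle \mec g_i, M\mec g_j\rangle$. For $v\in D_i$, $(M\mec f)(v)=\lambda_k \mec f(v)$, so
$$
(M\mec g_i)(v)=\lambda_k\mec f(v)-\sum_{u\notin D_i}M(v,u)\mec f(u).
$$
It follows that for $\mec g=\sum_i c_i\mec g_i$,
$$
\langle \mec g,(M-\lambda_k I)\mec g\rangle
=\sum_{\{u,v\}\in E_G,\ u\in D_i,\ v\in D_j,\ i\ne j} \bigl(-M(u,v)\bigr)\,\mec f(u)\mec f(v)\,(c_i-c_j)^2 .
$$
This is the standard identity, checked by expanding: the diagonal terms $c_i^2$ produce $-\sum M(u,v)\mec f(u)\mec f(v)$ over boundary edges, and the cross terms produce $2c_ic_jM(u,v)\mec f(u)\mec f(v)$. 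Each summand satisfies $-M(u,v)>0$ and $\mec f(u)\mec f(v)<0$, so the form is $\le 0$, with equality iff $c_i=c_j$ for every pair of domains joined by an edge.

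\textbf{Step 3: the contradiction.} Choose $\mec g$ in the span of $\mec g_1,\ldots,\mec g_{k+1}$, nonzero and orthogonal to $\mec f_1,\ldots,\mec f_{k-1}$; this is possible because we have $k+1$ coefficients and only $k-1$ constraints. I would additionally impose orthogonality to $\mec f_k$, using up $k$ constraints. By Step 2, $\langle\mec g,M\mec g\rangle\le\lambda_k\|\mec g\|^2$. Since $\mec g\perp\mec f_1,\ldots,\mec f_k$, Courant--Fischer gives $\langle\mec g,M\mec g\rangle\ge\lambda_{k+1}\|\mec g\|^2\ge\lambda_k\|\mec g\|^2$. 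Equality is therefore forced, so $\mec g$ lies in $E(M,\lambda_k)$ (and in fact $\lambda_{k+1}=\lambda_k$).

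From $(M-\lambda_k I)\mec g=0$ I will get the contradiction as follows. Some $c_i\ne0$, and $\mec g$ vanishes on the domains $D_{k+2},\ldots,D_r$, which is nonempty since $r\ge k+2$ or via the setup. More robustly, $\mec g$ is zero outside $D_1\cup\cdots\cup D_{k+1}$, or on some $D_j$ with $c_j=0$.

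\textbf{Step 4: the main obstacle.} The hard part is showing that this support structure is incompatible with being an eigenvector. I would argue as follows. Take a vertex $v$ with $\mec g(v)=0$ that is adjacent to a vertex $u$ with $\mec g(u)\ne0$. Such a pair exists if $G$ is connected, which I may assume by working componentwise. Suppose $v\in D_j$ and $u\in D_i$. Then
$$
(M\mec g)(v)=\sum_{w\sim v} M(v,w)\,\mec g(w)=\sum_{w\sim v} M(v,w)\,c_{i(w)}\,\mec f(w),
$$
which must vanish. This is delicate, since terms of different signs could cancel. To avoid the issue, I would instead use the equality case of Step 2: $c_i=c_j$ whenever $D_i$ and $D_j$ are adjacent. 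On a connected graph this makes all $c_i$ equal among the domains $D_1,\ldots,D_{k+1}$ together with any domain adjacent to them. If $r>k+1$, some domain $D_j$ with $c_j=0$ is adjacent to this set, forcing every coefficient to be $0$ and contradicting $\mec g\ne0$. If $r=k+1$, all $c_i$ are equal, so $\mec g=c\,\mec f$; but $\mec g\perp\mec f_k=\mec f$ with $\mec g\ne0$, again a contradiction.

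If $G$ is disconnected, I apply the argument componentwise. The eigen-equation holds on each component, and the Courant--Fischer count still goes through, because the equality condition forces constancy of the coefficients on each component, and orthogonality to $\mec f$ then rules this out.
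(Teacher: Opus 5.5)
For connected $G$ your argument is correct. The Step~2 identity checks out; it is the Duval--Reiner identity behind the Davies et al.\ proof, which the paper cites in place of giving its own proof. (The other source it cites, Friedman, argues on the geometric realization instead.) The Courant--Fischer squeeze in Step~3 forces $\langle \mec g,(M-\lambda_k I)\mec g\rangle=0$. Form the graph whose vertices are the $r$ domains, joined when an edge of $G$ links them. Because that graph is connected, all $r$ coefficients, including $c_j=0$ for $j\ge k+2$, are equal. So either they all vanish, or $\mec g=c\,\mec f\perp\mec f$ forces $c=0$. The muddled middle of Step~3 and the abandoned boundary-vertex computation in Step~4 can be deleted.

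The genuine gap is the claim that you ``may assume $G$ connected by working componentwise,'' together with your final paragraph on disconnected $G$. This is false, and no argument can repair it, because the statement itself fails for disconnected graphs. Let $G$ have vertices $a_1,a_2,b_1,b_2$ and edges $\{a_1,a_2\}$, $\{b_1,b_2\}$, and take $M=\Delta_G$. Its eigenvalues are $0,0,2,2$, with orthonormal eigenbasis $\mec f_1=\frac12(1,1,1,1)$, $\mec f_2=\frac12(1,1,-1,-1)$, $\mec f_3=\frac12(1,-1,1,-1)$, $\mec f_4=\frac12(1,-1,-1,1)$. Then $\mec f_1$ is nowhere zero with ${\rm SND}(\mec f_1)=2>1$, and $\mec f_3$ is nowhere zero with ${\rm SND}(\mec f_3)=4>3$.

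In your argument, vanishing of the form only forces the $c_i$ to be constant on each component. That leaves one free constant per component, and a single orthogonality condition against $\mec f$ cannot kill them all. Concretely, for $k=1$, $\mec g=\mec g_1-\mec g_2=\mec f_2$ is a genuine $\lambda_2=\lambda_1$ eigenvector orthogonal to $\mec f_1$, and nothing is contradicted. Passing to components does not help either: the global index $k$ need not match the eigenvalue's index within a component, while nodal domain counts add across components. The fix is to add the hypothesis that $G$ is connected, as the results of Friedman and Davies et al.\ assume; with it, your proof is complete.
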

This was proven by Friedman \cite{friedman_geometric_aspects} for the
classical Laplacian $\Delta_G$; a minor error there is easy to fix;
it is easy to see that the methods and results of 
\cite{friedman_geometric_aspects} hold for generalized Laplacians
as well as the classical one.
As mentioned in the introduction, the above theorem is also
implied by \eqref{eq_weak_nodal_domain_courant_theorem}, proven in
\cite{davies_et_al2001}.

For bibliographical reference, we mention that there are two approaches
to the above theorem and to nodal domain theorems in general.
There is a ``geometric'' version
in \cite{friedman_geometric_aspects}, which is simple and closely
follows the proof in analysis.
However, most other authors 
\cite{
colinDeVerdiere1993,duval_reiner,davies_et_al2001,gladwell_zhu}
(see also those in \cite{biyikoglu_et_al_laa_2004,biyikoglu_et_al_textbook})
work ``discretely,'' using graph theory, and proofs of nodal domain
theorems use the powerful
Lemma~5
of Duval-Reiner \cite{duval_reiner}\footnote{This article also had a
  minor error in its main theorem.
  }, quoted as
Lemma~1 in \cite{davies_et_al2001},
Lemma~2 in \cite{gladwell_zhu}.

\subsection{Urschel's Theorem}

Urschel's theorem \cite{urschel} proves the following theorem.

\begin{theorem}
Let $M$ be a generalized Laplacian, whose eigenvalues are arranged as 
$\lambda_1\le \cdots\le \lambda_n$.
Say that $\lambda_{k-1}<\lambda_k=\cdots=\lambda_{k+m-1}<\lambda_{k+m}$,
and let $E(M,\lambda_k)$ be the subspace of vectors $\mec f$ satisfying
$M\mec f=\lambda_k \mec f$.  Then
\begin{enumerate}
\item 
there exists $\mec f_k \in E(M,\lambda_k)$ such that
${\rm UN}(\mec f_k)\le k$; and
\item 
the set of such $\mec f_k$ is a set of positive measure 
(i.e., of codimension $0$) in $E(M,\lambda_k)$.
\end{enumerate}
\end{theorem}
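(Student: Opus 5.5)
We will prove this with perturbation theory, using Theorem~\ref{th_friedmans_nodal_region_theorem} as the only nodal-domain input. The key observation is that if $\mec g$ is a nowhere-zero eigenvector of a generalized Laplacian $M'$ with eigenvalue $\lambda_j(M')$, and $\mec g$ is close to some $\mec f\in E(M,\lambda_k)$, then $\mec g$ is (up to equivalence) a signing of $\mec f$. Then ${\rm UN}(\mec f)\le {\rm SND}(\mec g)\le j$. So we look for a generic perturbation $M(\epsilon)=M+\epsilon M_1$. Here $M_1$ is supported on $G$, for instance diagonal, so $M(\epsilon)$ is still a generalized Laplacian for small $|\epsilon|$. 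We want $M(\epsilon)$ to split $\lambda_k$ into $m$ simple eigenvalues whose eigenvectors are nowhere zero and converge, as $\epsilon\to 0^+$, to an orthonormal basis $\mec f_k,\ldots,\mec f_{k+m-1}$ of $E(M,\lambda_k)$.

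\textbf{Step 1 (analytic perturbation).} By Rellich--Kato for analytic symmetric families, the eigenvalues and eigenvectors of $M(\epsilon)$ can be chosen analytic in $\epsilon$ near $0$. The $m$ branches emanating from $\lambda_k$ have derivatives equal to the eigenvalues of the compression $P M_1 P$, where $P$ is the orthogonal projection onto $E(M,\lambda_k)$. Their limit eigenvectors are the eigenvectors of $P M_1 P$ on $E(M,\lambda_k)$. For small $\epsilon>0$ the ordering of these branches matches the ordering of the eigenvalues of $PM_1P$. Hence the limit of the branch with the $(i{+}1)$-st smallest compressed eigenvalue is the limit of eigenvectors of $\lambda_{k+i}(M(\epsilon))$.

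\textbf{Step 2 (nowhere zero along the branch).} Each branch $\mec g(\epsilon)$ is analytic, so each coordinate $\mec g(\epsilon)(v)$ is either identically zero or nonzero for all small $\epsilon\ne 0$. If no coordinate vanishes identically, then for small $\epsilon>0$ the vector $\mec g(\epsilon)$ is nowhere zero. Its signs agree with those of $\mec f=\lim \mec g(\epsilon)$ wherever $\mec f\neq 0$. Theorem~\ref{th_friedmans_nodal_region_theorem} then gives ${\rm UN}(\mec f)\le k+i\le k+m-1$. To get ${\rm UN}(\mec f)\le k$ itself, we use the lowest branch $i=0$: every $\mec f$ arising as the bottom eigenvector of $PM_1P$ satisfies ${\rm UN}(\mec f)\le k$.

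\textbf{Step 3 (the obstacle: avoiding identically vanishing coordinates).} The main difficulty is showing that for generic $M_1$ no coordinate of the branch vanishes identically. For fixed $v$, the condition that the bottom branch satisfy $\mec g(\epsilon)(v)\equiv 0$ is algebraic in the entries of $M_1$. So it suffices to exhibit one admissible $M_1$ for which it fails, and then the bad set is a proper algebraic subset. We would try this first through the Taylor expansion $\mec g(\epsilon)=\mec f+\epsilon\mec g_1+\cdots$, where $\mec g_1=-(M-\lambda_k)^{+}(M_1-\mu)\mec f$ modulo $E$, with $(M-\lambda_k)^{+}$ the pseudoinverse. The aim is to show that at a zero $v$ of $\mec f$ some coefficient can be made nonzero. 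This uses connectivity: $v$ has a path to a vertex where $\mec f\ne0$, and the off-diagonal entries of $M_1$ along that path can be varied.

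\textbf{Step 4 (positive measure).} As $M_1$ ranges over an open set of diagonal (or supported) perturbations, the bottom eigenvector of $PM_1P$ ranges over an open subset of the unit sphere of $E(M,\lambda_k)$. Indeed, any unit $\mec f\in E$ is the bottom eigenvector of $P M_1 P$ for suitable $M_1$, since compressions of diagonal-plus-supported matrices span enough symmetric operators on $E$. Removing the proper algebraic bad set from Step~3 leaves a set of positive measure, and the cone over it gives part~(2). Part~(1) follows from part~(2).
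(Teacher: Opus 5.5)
\textbf{Step 4 fails, and with it part (2).} You claim that every unit vector of $E(M,\lambda_k)$ is the bottom eigenvector of $PM_1P$ for some $M_1$ supported on $G$. That is false. For $\mec g,\mec h\in E(M,\lambda_k)$, the entry $(\mec g,M_1\mec h)$ involves only those $M_1(u,w)$ with $u$ in the support of $\mec g$, $w$ in the support of $\mec h$, and $u=w$ or $u\sim w$. So the sparsity of $G$ can force the compression to be diagonal in a fixed basis.

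The ladder of Subsection~\ref{su_simple_ladder_with_multiple_middle_rungs} is a counterexample. Let $\mec f_1$ be the eigenvector with $a=1,b=0$, supported on $\{v_1,v_3\}$, and $\mec f_2$ the one with $a=0,b=1$, supported on $\{v_4,v_6\}$. No vertex of one support equals or is adjacent to a vertex of the other, so $(M_1\mec f_1,\mec f_2)=0$ for every supported $M_1$. Hence whenever $PM_1P$ has distinct eigenvalues on $E$, its eigenvectors are multiples of $\mec f_1$ and $\mec f_2$. When the two eigenvalues coincide, first-order theory does not identify the limits at all, so your Step 1 identification breaks down there too. Either way, the limits your construction produces form a measure-zero subset of $E$. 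This happens even though, in this example, every $\mec f$ with $a,b\ne 0$ does satisfy ${\rm UN}(\mec f)=3=k$.

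This is exactly the point made in Section~\ref{se_ladder_again} and in the example after Lemma~\ref{le_perturb_subspace_E_at_a_diagonal_matrix}. The paper does not prove this theorem; it quotes it from Urschel. It recovers only part (1) by perturbation and states that its perturbation method cannot yield part (2), which comes from Urschel's linear-algebraic algorithm instead. Consequently your remark ``part (1) follows from part (2)'' must be replaced by a direct argument.

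\textbf{Part (1): Steps 1--2 are sound, but Step 3 is only a plan.} Steps 1--2 match the paper's strategy in spirit: split $\lambda_k$, apply Theorem~\ref{th_friedmans_nodal_region_theorem} to a nowhere-zero perturbed eigenvector, and pass to the limit using ${\rm UN}(\mec f)\le{\rm UN}(\mec g)$ whenever $\mec g$ is a partial signing of $\mec f$. In Step 3, however, first-order coefficients need not be controllable at deep Urschel vertices; Lemma~\ref{le_perturbation_at_simple_eigenvalue_main} only guarantees freedom on at least $s$ Urschel vertices. So you need a genuine argument that no coordinate of the branch vanishes to all orders.

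Your algebraic route can be completed as follows. The resultant in $\lambda$ of the characteristic polynomials of $M+\epsilon M_1$ and of its principal submatrix deleting $v$ is a polynomial in $\epsilon$. Its identical vanishing is a Zariski-closed condition on $M_1$. If it is not identically zero, then for small $\epsilon\ne 0$ all eigenvalues are simple and no eigenvector vanishes at $v$. However, showing that this closed condition is proper requires a generalized Laplacian on $G$ with simple spectrum and nowhere-vanishing eigenvectors. That is a genericity fact you would have to prove or cite.

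The paper avoids this entirely. For a simple eigenvalue it iterates: at each stage it perturbs the new generalized Laplacian $M(\epsilon_i)$ so that at least one further zero becomes signed (proof of Theorem~\ref{th_Urschel_reproved_with_two_signings}). For multiplicity $m$, it first splits $\lambda_k$ with a diagonal $M_1$ from Lemma~\ref{le_perturb_subspace_E_at_a_diagonal_matrix}. It then applies the simple-eigenvalue result to $M(\epsilon)$ at a fixed small $\epsilon>0$ (proof of Theorem~\ref{th_perturbation_at_multiple_eigenvalue_main}).
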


\subsection{The Urschel Nodal Domain Number of Eigenvalues of the Star}

The following example is important to keep in mind.

\begin{example}\label{ex_star_Urschel_example}
Let $G$ be the star on $n$ vertices, therefore having
a {\em centre vertex} of degree $n-1$, and $n-1$ leaves (each with an
edge to the centre).  It is easy to see that its Laplacian eigenvalues 
(i.e., of $M=\Delta_G$, the classical graph Laplacian) are
$$
0=\lambda_1,
\ 1 = \lambda_2=\cdots=\lambda_{n-1},
\ n = \lambda_n,
$$
and the eigenspace for $\lambda=1$ is the set of functions, $\mec f$,
such that $\mec f$ vanishes at the centre, and the sum of $\mec f$'s values at the
leaves is $0$.
It follows that if $\mec f$ is nowhere zero except at the centre, and is
hence positive at $m_+$ leaves and negative at $m_-$ leaves
(so $m_+ + m_- = n-1$), then we easily see that
$$
{\rm UN}(\mec f) = 1 + \min(m_+,m_-).
$$
Hence the $\mec f\in E(\Delta_G,1)$ that are nowhere zero at the leaves
and satisfy ${\rm UN}(\mec f)=2$ are those
with exactly one positive value at a leaf and all others negative,
or vice versa.
\end{example}

It is not hard to see\footnote{
  Indeed,
  the probability that a $\cN(0,1)$ random variable is larger than
  $m$ is bounded above and below by a constant times $e^{-m^2/2}/m$.
  If each of $x_1,\ldots,x_{n-1}$ is an independent Gaussian, then their
  mean is distributed like an $\cN(0,1/\sqrt{n-1})$, and is therefore
  at most $n^{-1/3}$ in absolute value with probability tending to $1$.
  But the probability that a given $x_i$ is greater than $n^{-1/3}$
  or less than $-n^{-1/3}$ tends to $1/2$.
  Hence the probability that exactly one of $n$ of the random variables,
  normalized by the mean, tends to $0$.
  }
that if $x_1,\ldots,x_{n-1}$ are Gaussian
random variables, then the probability that one of $x_i$ is
above the mean and $n-1$ are below the mean, or vice versa,
tends to $0$ as $n\to\infty$.
In this sense a ``typical'' eigenvector in
$E(\Delta_G,1)$, built by choosing $x_1,\ldots,x_{n-1}$ as Gaussian
and subtracting the mean from each $x_i$, has a small probability
of having its Urschel number equal to $2$ (as $n\to\infty$).

\subsection{Urschel Vertices}

Our perturbation theory around simple eigenvalues are tied to
what we call {\em Urschel vertices}, which we now define.
[We define them in the context of an arbitrary subspace,
$E\subset\reals^{V_G}$, although in this article we only care
about the case where $E$ is an eigenspace of an eigenvalue of a
generalized Laplacian.]

\begin{definition}
Let $G$ be a graph, and
$E\subset\reals^{V_G}$ be an arbitrary subspace of positive dimension.
We say that $v\in V_G$ is an
{\em Urschel vertex for $E$} if
for all $\mec f\in E$ we have $\mec f(v)=0$; otherwise we say that $v$ is a
{\em non-Urschel vertex}.  We use
$$
{\rm Urschel}(E), \ {\rm NonUrschel}(E)
$$
to denote the sets of Urschel and non-Urschel vertices of $E$.
We say that a vertex $v\in{\rm Urschel}(E)$ is a {\em shallow Urschel
vertex of $E$} if $v$ is adjacent to some non-Urschel vertex of $E$;
otherwise we say that $v$ is a {\em deep Urschel vertex of $E$};
we use
$$
{\rm Shallow}(E),\ {\rm Deep}(E)
$$
to denote the sets of shallow and deep Urschel vertices.
\end{definition}
Hence if $\dim(E)=1$, a vertex, $v$, is an Urschel vertex if $\mec f(v)=0$
for any nonzero $\mec f\in E$.
If $\dim(E)\ge 2$, then for all $v\in V_G$ there exists an $\mec f\in E$
such that $\mec f(v)=0$; hence $v$ is a non-Urschel vertex if the $\mec f\in E$
with $\mec f(v)=0$ are a subspace of dimension $\dim(E)-1$.

In Figure~\ref{fi_shallow_deep_urschel_illustration} we depict a 
two-dimensional space, $E$, and its non-Urschel and Urschel vertices,
both shallow and deep.
\begin{figure}[ht]
$$
\begin{tikzpicture}[scale=0.20]
\tikzmath{ 
  int \j ;
}
\foreach \i in {1,...,3} {
  \filldraw (0,6 - \i * 3) circle (5pt);
  \filldraw (10,6 - \i * 3) circle (5pt);
  \tikzmath{
    \j = \i + 3;
  }
}
\node[anchor=east]  at (-1,3) {\Small $\mec f(v_1)=a$};
\node[anchor=east] at (-1,0) {\Small $\mec f(v_2)=0$};
\node[anchor=east] at (-1,-3) {\Small $\mec f(v_3)=-a$};
\node[anchor=west] at (11,3) {\Small $\mec f(v_4)=b$};
\node[anchor=west] at (11,0) {\Small $\mec f(v_5)=0$};
\node[anchor=west] at (11,-3) {\Small $\mec f(v_6)=-b$};
\draw (0,3) to (0,-3);
\draw (10,3) to (10,-3);
\filldraw (5,5) circle (5pt);
\node at (5,6.5) {\Small $\mec f(v_7)=0$};
\filldraw (5,0) circle (5pt);
\node at (5,1.5) {\Small $\mec f(v_8)=0$};
\filldraw (5,-5) circle (5pt);
\node at (5,-6.5) {\Small $\mec f(v_9)=0$};
\draw (0,0) to (5,5);
\draw (10,0) to (5,5);
\draw (0,0) to (5,0);
\draw (10,0) to (5,0);
\draw (0,0) to (5,-5);
\draw (10,0) to (5,-5);
\end{tikzpicture}
\qquad
\begin{tikzpicture}[scale=0.20]
\tikzmath{ 
  int \j ;
}
\foreach \i in {1,...,3} {
  \filldraw (0,6 - \i * 3) circle (5pt);
  \filldraw (10,6 - \i * 3) circle (5pt);
  \tikzmath{
    \j = \i + 3;
  }
}
\node[anchor=east]  at (-1,3) {\Small non-Urschel};
\node[anchor=east] at (-1,0) {\Small shallow};    
\node[anchor=east] at (-1,-3) {\Small non-Urschel};
\node[anchor=west] at (11,3) {\Small non-Urschel};
\node[anchor=west] at (11,0) {\Small shallow};    
\node[anchor=west] at (11,-3) {\Small non-Urschel};
\draw (0,3) to (0,-3);
\draw (10,3) to (10,-3);
\filldraw (5,5) circle (5pt);
\node at (5,6.5) {\Small deep};      
\filldraw (5,0) circle (5pt);
\node at (5,1.5) {\Small deep};      
\filldraw (5,-5) circle (5pt);
\node at (5,-6.5) {\Small deep};      
\draw (0,0) to (5,5);
\draw (10,0) to (5,5);
\draw (0,0) to (5,0);
\draw (10,0) to (5,0);
\draw (0,0) to (5,-5);
\draw (10,0) to (5,-5);
\end{tikzpicture}
$$
\caption{
On the left we show a two dimensional subspace of functions, $E$, on the
vertices of a graph, where $a,b$ vary over $\reals$.
On the right, we show which vertices are non-Urschel, and of the
Urschel vertices we show which are shallow Urschel vertices, and which
are deep.
The shallow Urschel vertices are those Urschel vertices adjacent 
(i.e., distance 1) to at
least one
non-Urschel vertex, the deep Urschel vertices are only adjacent to
Urschel vertices (i.e., distance at least 2 to the set of Urschel vertices).
}
\label{fi_shallow_deep_urschel_illustration}
\end{figure}

\begin{definition}
Let $\mec f\from V_G\to\reals$ be any nonzero 
function, and let $E$ be the span of $\mec f$
(therefore $\dim(E)=1$).  An {\em Urschel vertex of $\mec f$} is an
Urschel vertex of $E$, and similarly for {\em non-Urschel vertices},
{\em shallow Urschel vertices} 
and {\em deep Urschel vertices}.  We similarly write
$$
{\rm Urschel}(\mec f),\ {\rm NonUrschel}(\mec f),
\ {\rm Shallow}(\mec f),\ {\rm Deep}(\mec f).
$$
\end{definition}

See Section~\ref{se_urschel_examples} for some interesting examples of
Urschel vertices in the eigenspaces of certain Laplacian eigenvalues
of certain graphs (the reader may wish to look at these examples before
reading the rest of this section).

\subsection{Our Main Theorems}

We now state our main theorems, whose proofs are given in 
Sections~\ref{se_gen_Laplacian_perturb}
and~\ref{se_gen_Laplacian_perturb_multiple}.

The first theorem deals with a simple eigenvalue (multiplicity $1$)
whose eigenfunction has no deep Urschel vertices.

\begin{theorem}\label{th_max_urschel_number_simple_eig_all_shallow}
Let $M$ be a generalized Laplacian on a connected simple graph, $G$, and
say that $\lambda_k$ is a simple eigenvalue of $M$.
Say that $\mec f_k$ is an eigenvector of $\lambda_k$ such that
$$
{\rm Urschel}(\mec f_k) = \{ v\in V_G \ | \ \mec f_k(v)=0 \}
$$
is nonempty, but $\mec f_k$ has only shallow Urschel points.
Then for any
$$
\mec f_{k,1}\in\reals^{{\rm Urschel}(\mec f_k)}
$$
there is an $M_1$ supported in $G$ such that 
\begin{equation}\label{eq_find_any_specified_f_k_one_only_shallow_U_vert}
\mec f_{k,1}=(M-\lambda_k I)^+ M_1 \mec f_k .
\end{equation} 
In particular, we can find an $\mec f_{k,1}$ for any sign pattern specified
on the Urschel vertices.
Hence
\begin{equation}\label{eq_UN_max_f_k_at_most_k_no_deep_Urschel}
\MUN(\mec f_k)\le k.
\end{equation}
\end{theorem}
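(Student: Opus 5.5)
The plan is to use first-order perturbation theory for a simple eigenvalue and to reduce the problem to a linear-algebra construction of $M_1$. Write $U={\rm Urschel}(\mec f_k)$ and identify $\reals^{{\rm Urschel}(\mec f_k)}$ with $\realBVG{U}$. For a symmetric $M_1$ supported on $G$, set $M(\epsilon)=M+\epsilon M_1$. Because $\lambda_k$ is simple, standard perturbation theory gives the following for $|\epsilon|$ small: $M(\epsilon)$ has a simple $k$-th eigenvalue $\lambda_k(\epsilon)$, with an analytic eigenvector
$$
\mec f_k(\epsilon)=\mec f_k - \epsilon (M-\lambda_k I)^+ M_1\mec f_k + O(\epsilon^2).
$$
The sign convention on the first-order term does not matter, since we may replace $M_1$ by $-M_1$ or $\epsilon$ by $-\epsilon$. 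I will take this formula from the perturbation review in Section~\ref{se_perturbation}.

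\textbf{Step 1 (solving \eqref{eq_find_any_specified_f_k_one_only_shallow_U_vert}).} Let $\mec g=\mec f_{k,1}\in\realBVG{U}$. Since $\mec f_k$ vanishes on $U$, we have $\mec g\perp\mec f_k$. The map $(M-\lambda_k I)^+$ is the inverse of $M-\lambda_k I$ on $\mec f_k^\perp$ and kills $\mec f_k$. Hence it suffices to find $M_1$ with
$$
M_1\mec f_k=\mec h, \qquad \mec h\eqdef (M-\lambda_k I)\mec g,
$$
because $\mec h\in\mec f_k^\perp$ and so $(M-\lambda_k I)^+\mec h=\mec g$. I build $M_1$ explicitly as follows.
\begin{enumerate}
\item For each non-Urschel vertex $w$, set $M_1(w,w)=\mec h(w)/\mec f_k(w)$.
\item For each $u\in U$, use shallowness to choose a neighbour $w_u\sim u$ with $\mec f_k(w_u)\ne 0$, and set $M_1(u,w_u)=M_1(w_u,u)=\mec h(u)/\mec f_k(w_u)$.
\item Set all other entries of $M_1$ to zero.
\end{enumerate}
Then $M_1$ is symmetric and supported on $G$.

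Next I check that $M_1\mec f_k=\mec h$.
\begin{enumerate}
\item At $u\in U$, the only nonzero entry in row $u$ that meets a nonzero value of $\mec f_k$ is $M_1(u,w_u)$. This gives $(M_1\mec f_k)(u)=\mec h(u)$.
\item At a non-Urschel vertex $w$, the off-diagonal entries $M_1(w,u)$ multiply $\mec f_k(u)=0$. So only the diagonal term survives, and $(M_1\mec f_k)(w)=\mec h(w)$.
\end{enumerate}
This step is exactly where the hypothesis that there are no deep vertices is used.

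\textbf{Step 2 (generalized Laplacian).} $M_1$ has nonzero off-diagonal entries only on edges of $G$. Hence for $|\epsilon|$ small, every edge entry of $M+\epsilon M_1$ stays negative, and no new support appears. So $M(\epsilon)$ is a generalized Laplacian on $G$.

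\textbf{Step 3 ($\MUN$ bound).} Fix any signing $\sigma\in\{\pm1\}^U$. Choose $\mec g$ with $\mec g(u)=\sigma(u)$ for $u\in U$ and build $M_1$ as in Step~1. Choose the sign of $\epsilon$ so that the first-order term equals $+|\epsilon|\mec g$, and take $|\epsilon|$ small. Then the eigenvector $\mec f_k(\epsilon)$ behaves as follows.
\begin{enumerate}
\item On $V_G\setminus U$ it has the sign of $\mec f_k$, because $\mec f_k$ is nonzero there and dominates.
\item On $U$ it has sign $\sigma$, because $\mec f_k$ vanishes there and the first-order term dominates.
\end{enumerate}
So $\mec f_k(\epsilon)$ is nowhere zero, and its sign pattern is the signing $\tilde{\mec f}$ of $\mec f_k$ determined by $\sigma$. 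Since $\lambda_k(\epsilon)$ is still the $k$-th eigenvalue of $M(\epsilon)$, Theorem~\ref{th_friedmans_nodal_region_theorem} gives ${\rm SND}(\tilde{\mec f})={\rm SND}(\mec f_k(\epsilon))\le k$. As $\sigma$ was arbitrary, this yields \eqref{eq_UN_max_f_k_at_most_k_no_deep_Urschel}.

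The main point needing care is the perturbation input: analyticity of the simple eigenpair, and the exact form and sign of the first-order correction via the pseudoinverse. For this I would rely on the general facts recalled in Section~\ref{se_perturbation}. The combinatorial construction of $M_1$ is then straightforward.
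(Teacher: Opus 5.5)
Your proof is correct and follows essentially the paper's route. There, Lemma~\ref{le_perturbation_at_simple_eigenvalue_main} uses exactly your elementary matrices (a diagonal entry at each non-Urschel vertex, and an edge entry joining each shallow vertex to a non-Urschel neighbour) to show that $\{M_1\mec f_k\}$ is all of $\reals^{V_G}$ when there are no deep vertices, then uses that $(M-\lambda_k I)^+$ is a bijection on $\mec f_k^\perp$, and finishes with the same sign-plus-Theorem~\ref{th_friedmans_nodal_region_theorem} argument; your explicit choice $M_1\mec f_k=(M-\lambda_k I)\mec g$ just makes that spanning-plus-bijection step concrete.
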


The next theorem deals with a simple eigenvalue where we allow
an arbitrary number of deep Urschel vertices.

\begin{theorem}\label{th_Urschel_reproved_with_two_signings}
Let $M$ be a generalized Laplacian on a connected simple graph, $G$, and
say that $\lambda_k$ is a simple eigenvalue of $M$.
Say that $\mec f_k$ is an eigenvector of $\lambda_k$ such that
$$
{\rm Urschel}(\mec f_k) = \{ v\in V_G \ | \ \mec f_k(v)=0 \}
$$
is nonempty.
Then there are generalized Laplacians $M'$ and $M''$ on $M$ such that:
the $k$-th eigenvalues of $M',M''$ are simple, and
there are corresponding eigenvectors $\mec f_k',\mec f_k''$ respectively such that:
\begin{enumerate}
\item 
$\mec f_k',\mec f_k''$ are each a signing of $\mec f_k$, and they are different signings
of $\mec f_k$;
\item 
$\mec f_k',\mec f_k''$ are nowhere zero, and both have $\le k$ nodal domains.
\end{enumerate}
Hence
\begin{equation}\label{eq_UN_two_f_k_at_most_k}
{\rm UN}_2(\mec f_k)\le k.
\end{equation} 
\end{theorem}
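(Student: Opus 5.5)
The plan is to perturb away the zeros of $\mec f_k$ in stages. Each stage replaces $M$ by a nearby generalized Laplacian whose $k$-th eigenvalue is still simple and whose eigenvector has strictly fewer zeros. Once no zeros remain, Theorem~\ref{th_friedmans_nodal_region_theorem} applies. Running the very first stage with $\epsilon>0$ and with $\epsilon<0$ produces the two different signings.

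\emph{The perturbation step.} Write $M(\epsilon)=M+\epsilon M_1$ with $M_1$ symmetric and supported on $G$. For $|\epsilon|$ small, $M(\epsilon)$ is still a generalized Laplacian, because its off-diagonal entries on edges stay negative; this holds for either sign of $\epsilon$. Since $\lambda_k$ is simple, standard perturbation theory (Section~\ref{se_perturbation}) gives the following for small $|\epsilon|$: the $k$-th eigenvalue of $M(\epsilon)$ is simple, and it has an eigenvector
\[
\mec f_k(\epsilon)=\mec f_k+\epsilon\,\mec f_{k,1}+O(\epsilon^2),
\qquad
\mec f_{k,1}=-(M-\lambda_k I)^+(M_1-\mu_1 I)\mec f_k ,
\]
where $\mu_1=\langle \mec f_k,M_1\mec f_k\rangle$. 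Suppose $\mec f_{k,1}(u)\ne 0$ for some $u\in{\rm Urschel}(\mec f_k)$. Then for small $\epsilon$ the vector $\mec f_k(\epsilon)$ has the same sign as $\mec f_k$ wherever $\mec f_k\ne 0$, so it is a partial signing of $\mec f_k$. Its zero set is a proper subset of ${\rm Urschel}(\mec f_k)$. Its signs at $u$ for $\epsilon>0$ and $\epsilon<0$ are opposite.

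We then iterate: apply the same step to $M(\epsilon)$ and $\mec f_k(\epsilon)$, using induction on the number of zeros. This stops at a generalized Laplacian $M'$ whose $k$-th eigenvalue is simple, with a nowhere-zero eigenvector $\mec f_k'$ that is a signing of $\mec f_k$. Theorem~\ref{th_friedmans_nodal_region_theorem} then gives ${\rm SND}(\mec f_k')\le k$. Doing this once starting from $\epsilon>0$ and once from $\epsilon<0$ yields $M',M''$ and $\mec f_k',\mec f_k''$. These differ in sign at $u$, so they are inequivalent signings, which gives ${\rm UN}_2(\mec f_k)\le k$. Deep vertices cause no trouble in the iteration itself. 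Since $G$ is connected, whenever the zero set is nonempty some zero vertex is adjacent to a nonzero one, so a shallow vertex always exists at the next stage.

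\emph{Constructing $M_1$ (the main obstacle).} It remains to find, whenever ${\rm Urschel}(\mec f_k)\ne\emptyset$, an $M_1$ supported on $G$ with $\mec f_{k,1}$ nonzero at some Urschel vertex. I would work backwards from a target vector $\mec g$ and then solve for $M_1$.
\begin{enumerate}
\item Take $\mec g$ supported on ${\rm Urschel}(\mec f_k)$. Then $\mec g\perp\mec f_k$ automatically.
\item Try to realize $(M-\lambda_k I)\mec g=-M_1\mec f_k$ with $\mu_1=0$; this gives $\mec f_{k,1}=\mec g$.
\item At each non-Urschel vertex $w$, match the required value with the diagonal entry $M_1(w,w)$, dividing by $\mec f_k(w)\ne 0$.
\item At each shallow vertex $u$, match it with the entry $M_1(u,v)=M_1(v,u)$ on an edge to a non-Urschel neighbour $v$. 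Any extra contributions this edge entry creates at $v$ are absorbed by re-adjusting $M_1(v,v)$.
\item Check $\mu_1=0$. The only nonzero entries of $M_1$ are diagonal entries at non-Urschel vertices and entries on edges $\{u,v\}$ with $u$ shallow. The edge terms vanish in $\mu_1$ because $\mec f_k(u)=0$. The diagonal terms are set equal to $-((M-\lambda_k I)\mec g)(w)/\mec f_k(w)$, so their contribution to $\mu_1$ is $-\langle \mec f_k,(M-\lambda_k I)\mec g\rangle$. This equals $-\langle (M-\lambda_k I)\mec f_k,\mec g\rangle=0$.
\item At deep vertices $M_1\mec f_k$ is necessarily zero. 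So we need $((M-\lambda_k I)\mec g)(x)=0$ for every deep $x$.
\end{enumerate}
To satisfy the last condition, fix a shallow $u$. Set $\mec g=\mec e_u$ on ${\rm Shallow}(\mec f_k)$, and let $D={\rm Deep}(\mec f_k)$. Then solve
\[
(M_{DD}-\lambda_k I)\,\mec g_D=-M_{D,u}
\]
for the values of $\mec g$ on $D$.

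This works directly when $\lambda_k$ is not an eigenvalue of $M_{DD}$. Otherwise I would first try to choose $u$ so that $M_{D,u}$ lies in the range of $M_{DD}-\lambda_k I$. Failing that, I would also allow $M_1$ to have nonzero entries on additional edges to non-Urschel vertices, or a small preliminary diagonal perturbation supported on $D$ that moves $\lambda_k$ off the spectrum of $M_{DD}$. Such a diagonal change does not affect $\mec f_k$, since $\mec f_k$ vanishes on $D$. Handling this resonant case is the step I expect to require the most care.
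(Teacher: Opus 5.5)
Your overall scheme is the paper's. You perturb by an $M_1$ supported on $G$, remove zeros stage by stage, apply Theorem~\ref{th_friedmans_nodal_region_theorem} to the final nowhere-zero eigenvector, and split into $\epsilon>0$ and $\epsilon<0$ at the first stage. Steps 1--5 of your construction of $M_1$ are also correct. They are the dual of the paper's Lemma~\ref{le_perturbation_at_simple_eigenvalue_main}: the achievable $\mec f_{k,1}$ are exactly the $\mec g\perp\mec f_k$ for which $(M-\lambda_kI)\mec g$ vanishes on ${\rm Deep}(\mec f_k)$.

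The gap is step 6 in the resonant case $\lambda_k\in{\rm spec}(M_{DD})$. You leave this case open, and it really occurs with $\lambda_k$ simple. Take the graph with edges $\{v_1,v_1'\},\{u,v_1\},\{u,v_1'\},\{u,x\}$. Put $-1$ on every edge, $M(x,x)=1$, and $0$ on the other diagonal entries. Then $\mec f=\mec e_{v_1}-\mec e_{v_1'}$ spans the eigenspace of the simple eigenvalue $\lambda_3=1$. Here $u$ is the only shallow vertex, $D=\{x\}$, $M_{DD}-\lambda_3=0$, and $M_{D,u}=-1$, so your equation for $\mec g_D$ has no solution. Choosing a different $u$ is impossible in this example. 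Adding edge entries at non-Urschel vertices can never help, because, as you note yourself, $M_1\mec f_k$ vanishes on $D$ for every $M_1$ supported on $G$.

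The missing observation is a dimension count: do not pin $\mec g$ to $\mec e_u$ on $S={\rm Shallow}(\mec f_k)$. For $\mec g$ supported on ${\rm Urschel}(\mec f_k)$, the deep-vertex condition reads $(M_{DD}-\lambda_kI)\mec g_D+M_{DS}\mec g_S=0$. This is $|D|$ homogeneous equations in $|D|+s$ unknowns with $s\ge 1$, so it always has a nonzero solution. Any such $\mec g$ is a valid $\mec f_{k,1}$ that is nonzero at some Urschel vertex.

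In the resonant case you can simply take $\mec g_S=0$ and $\mec g_D\in\ker(M_{DD}-\lambda_kI)$, so a deep vertex becomes nonzero first. In the example, $\mec g=\mec e_x$ is realized by $M_1(u,v_1)=M_1(v_1,u)=1$. This is precisely the paper's count $\dim W_1=|{\rm NonUrschel}(\mec f_k)|-1+s>\dim U_1$.

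Your other fallback, a small diagonal shift $\delta I_D$, also works, but you must justify it. The shift keeps $M$ a generalized Laplacian. It keeps $\mec f_k$ a $\lambda_k$-eigenvector, since $\mec f_k$ vanishes on $D$. By continuity, $\lambda_k$ stays the simple $k$-th eigenvalue for small $\delta$. Finally, $\lambda_k\notin{\rm spec}(M_{DD}+\delta I)$ for all small $\delta\neq 0$.
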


{\myred
The next theorem deals with a simple eigenvalue where we prove
${\rm UN}_{2^s}(\mec f_k)\le k$ where $s$ is the number of shallow
Urschel vertices; this generalizes both
\eqref{eq_UN_max_f_k_at_most_k_no_deep_Urschel}
and
\eqref{eq_UN_two_f_k_at_most_k}.

\begin{theorem}\label{th_Urschel_most_general}
Let $M$ be a generalized Laplacian on a connected simple graph, $G$, and
say that $\lambda_k$ is a simple eigenvalue of $M$.
Say that $\mec f_k$ is an eigenvector of $\lambda_k$ and let
$s$ be the number of shallow Urschel vertices of $\mec f_k$.
Then
$$
{\rm UN}_{2^s}(\mec f_k)\le k.
$$
\end{theorem}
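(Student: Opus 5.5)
We want $2^s$ inequivalent signings of $\mec f_k$, each with at most $k$ strong nodal domains. The plan is to show that every one of the $2^s$ sign patterns on ${\rm Shallow}(\mec f_k)$ extends to a full signing with ${\rm SND}\le k$. Such signings are automatically inequivalent, since they differ on the shallow vertices. This gives ${\rm UN}_{2^s}(\mec f_k)\le k$.

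\textbf{Step 1: perturbing to prescribe the shallow signs.} Let $U={\rm Urschel}(\mec f_k)$ and let $\sigma$ be a sign pattern on ${\rm Shallow}(\mec f_k)$. Following the proof of Theorem~\ref{th_max_urschel_number_simple_eig_all_shallow}, take $M_1$ supported on edges joining shallow vertices to adjacent non-Urschel vertices, so that $M_1\mec f_k$ is supported on ${\rm Shallow}(\mec f_k)$ with arbitrary prescribed values there. First-order perturbation theory for the simple eigenvalue gives
$$\mec f_k(\epsilon)=\mec f_k+\epsilon\,\mec f_{k,1}+O(\epsilon^2), \qquad \mec f_{k,1}=-(M-\lambda_k I)^+M_1\mec f_k .$$
(The sign depends on convention.) The restriction of $\mec f_{k,1}$ to $U$ does not automatically realize $\sigma$ on the shallow vertices. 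To handle this, view $(M-\lambda_kI)\mec f_{k,1}=-M_1\mec f_k+c\,\mec f_k$ on $U$. Since $\mec f_k$ vanishes on $U$, the restriction of $\mec f_{k,1}$ to $U$ solves a Dirichlet-type system for the principal submatrix $(M-\lambda_k I)_{U}$, with data given by $M_1\mec f_k$ on ${\rm Shallow}(\mec f_k)$ together with boundary values $\mec f_{k,1}$ on ${\rm NonUrschel}$. I would choose the data so that $\mec f_{k,1}$ restricted to ${\rm Shallow}(\mec f_k)$ equals any prescribed vector. This should follow from the surjectivity argument of Theorem~\ref{th_max_urschel_number_simple_eig_all_shallow}, applied only to the coordinates in ${\rm Shallow}$: the map $\mec g\mapsto (M-\lambda_kI)^+\mec g$, from vectors supported on ${\rm Shallow}$ to their restrictions on ${\rm Shallow}$, must have full rank. \textbf{This full-rank claim is the main obstacle.} I would prove it by the same argument used in that theorem (injectivity on $\mathbb R^{{\rm Shallow}\subset V_G}\cap \mec f_k^{\perp}$, which holds automatically since $\mec f_k=0$ there), possibly after adding a generic diagonal term to $M_1$ on $U$.

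\textbf{Step 2: recursing on the remaining zeros.} For small $\epsilon$, the matrix $M(\epsilon)=M+\epsilon M_1$ is a generalized Laplacian, and its $k$-th eigenvalue stays simple. Its eigenvector $\mec f_k(\epsilon)$ is a partial signing of $\mec f_k$ that agrees with $\sigma$ on ${\rm Shallow}(\mec f_k)$ and whose zero set is a proper subset of $U$. Now apply Theorem~\ref{th_Urschel_reproved_with_two_signings}, or more simply iterate its perturbation argument one step at a time, to $M(\epsilon)$ and $\mec f_k(\epsilon)$. Each subsequent perturbation is taken much smaller than all previous ones, so that every nonzero sign already assigned is preserved. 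The recursion terminates when we reach a generalized Laplacian $M'$ whose $k$-th eigenvalue is simple and whose eigenvector $\mec f_k'$ is nowhere zero. By construction, $\mec f_k'$ is a signing of $\mec f_k$ extending $\sigma$.

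\textbf{Step 3: conclusion.} By Theorem~\ref{th_friedmans_nodal_region_theorem} applied to $M'$, we get ${\rm SND}(\mec f_k')\le k$. Doing this for each of the $2^s$ patterns $\sigma$ gives $2^s$ pairwise inequivalent signings with at most $k$ nodal domains, hence ${\rm UN}_{2^s}(\mec f_k)\le k$. When $s=0$, the claim is just ${\rm UN}_1(\mec f_k)\le k$, which follows from Step 2 alone.
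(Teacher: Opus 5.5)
\textbf{The gap is in Step~1: the full-rank claim is false.} Injectivity of $(M-\lambda_kI)^+$ on $\realBVG{{\rm Shallow}(\mec f_k)}$ says nothing about injectivity once you restrict the output to the shallow coordinates.

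Your restricted class of $M_1$ already fails in the paper's example in Section~\ref{se_concluding}. There $M_0^+(u_1,u_1)=0$, so every $M_1$ supported on the edges $\{u_1,v_1\},\{u_1,v_1'\}$ gives a first-order term vanishing at $u_1$. The paper instead uses a diagonal entry at the non-Urschel vertex $v_1'$. Diagonal entries on ${\rm Urschel}(\mec f_k)$ do not help, since $M_1\mec f_k$ does not see them.

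More seriously, even over all $M_1$ supported on $G$, the shallow values cannot be prescribed in general. Write $S={\rm Shallow}(\mec f_k)$ and $D={\rm Deep}(\mec f_k)$. Both $\mec f_k$ and $M_1\mec f_k$ vanish on $D$, and deep vertices have no non-Urschel neighbours. So the $D$-rows of $(M-\lambda_kI)\mec f_{k,1}=-M_1\mec f_k+c\,\mec f_k$ read
\[
(M-\lambda_kI)_{DD}\,\mec f_{k,1}|_D+M_{DS}\,\mec f_{k,1}|_S=0 .
\]
Hence if $\lambda_k$ is an eigenvalue of the principal submatrix $M_{DD}$, then $\mec f_{k,1}|_S$ is confined to a proper subspace.

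Here is a concrete instance. Take the triangle $v_1v_2u$ with a pendant vertex $w$ at $u$, all edge entries $-1$, and all diagonal entries $0$ except $M(w,w)=1$. Then:
\begin{itemize}
\item $\mec e_{v_1}-\mec e_{v_2}$ spans the eigenspace of the simple eigenvalue $\lambda_3=1$;
\item $u$ is shallow and $w$ is deep;
\item the $w$-row gives $\mec f_{3,1}(u)=0$ for \emph{every} admissible $M_1$.
\end{itemize}
So no first-order perturbation assigns a sign to $u$. The theorem itself holds in this example; it is the mechanism of Step~1 that fails.

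\textbf{What the dimension count actually gives.} This is the last part of Lemma~\ref{le_perturbation_at_simple_eigenvalue_main}. The image under $(M-\lambda_kI)^+$ of $\{M_1\mec f_k\}\cap\mec f_k^\perp$ has dimension $|{\rm NonUrschel}(\mec f_k)|-1+s$. Its intersection with $\realBVG{{\rm NonUrschel}(\mec f_k)}$ lies in $\mec f_k^\perp\cap\realBVG{{\rm NonUrschel}(\mec f_k)}$, which has dimension $|{\rm NonUrschel}(\mec f_k)|-1$. So its projection to $\realBVG{{\rm Urschel}(\mec f_k)}$ has dimension at least $s$. This yields $s$ freely prescribable coordinates on some set $V'\subset{\rm Urschel}(\mec f_k)$, which may contain deep vertices; in the example, $V'=\{w\}$. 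Running your Steps~2--3 with $V'$ in place of $S$ is exactly the paper's proof, and the $2^s$ signings are then distinguished on $V'$.

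\textbf{Your stronger claim is true, by a different argument.} The claim that every sign pattern on $S$ extends holds, but not via perturbation. For each $u\in S$ we have $\sum_{v\sim u}M(u,v)\mec f_k(v)=0$ with all $M(u,v)<0$ and some $\mec f_k(v)\ne 0$. This forces $u$ to have non-Urschel neighbours of both signs, so any sign at $u$ merges it into an existing domain. Now give each component of $G[D]$ the sign of an adjacent shallow vertex, which exists since $G$ is connected. The resulting signing has at most ${\rm SND}(\mec f_k)$ domains, and ${\rm SND}(\mec f_k)\le k$ by \eqref{eq_strong_nodal_domain_courant_theorem} with $m=1$.
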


This theorem shows that the less the number of deep Urschel points
an eigenfunction of a simple eigenvalue has, the closer to
the bound ${\rm UN}_{\max{}}(\mec f_k)\le k$ one gets.
Even though Theorem~\ref{th_Urschel_most_general} implies
\eqref{eq_UN_max_f_k_at_most_k_no_deep_Urschel}
and
\eqref{eq_UN_two_f_k_at_most_k},
we state Theorems~\ref{th_max_urschel_number_simple_eig_all_shallow}
and~\ref{th_Urschel_reproved_with_two_signings} beforehand because their
proofs are easier and together form a stepping stone to the proof of
Theorem~\ref{th_Urschel_most_general}.
In Section~\ref{se_shallow_deep} we give examples of graphs and generalized
Laplacians with simple eigenvalues whose eigenvector has an
arbitrary number of shallow and deep Urschel vertices and where
Theorem~\ref{th_Urschel_most_general} is, in a sense, tight.
}

Our final main result deals with multiple eigenvalues; we get a stronger
upper bound than ${\rm UN}(\mec f_j)\le j$ for some values of $j$ when we
have a multiple eigenvalue.

\begin{theorem}\label{th_perturbation_at_multiple_eigenvalue_main}
Let $M$ be a generalized Laplacian on a graph $G$ with eigenvalues
\eqref{eq_eigenvalues_increasing_order}.
Let $\lambda_k$ be an eigenvalue of multiplicity $m$ (as in
\eqref{eq_kth_eigenvalue_has_multiplicity_m_intro}).
Then there exists an orthonormal eigenbasis 
$\mec f_k,\mec f_{k+1},\ldots,\mec f_{k+m-1}$ of $E(M,\lambda_k)$
such that 
\begin{equation} 
\label{eq_UN_multiple_eigenvalue_two_sided_bound}
\mbox{\rm for all $0\le j\le m-1$},\quad
{\rm UN}(\mec f_{k+j}) \le  k+\min(j,m-1-j) .
\end{equation} 
\end{theorem}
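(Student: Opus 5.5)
We perturb $M$ so that the multiple eigenvalue splits into simple ones, apply Theorem~\ref{th_friedmans_nodal_region_theorem} to the perturbed eigenvectors, and pass to the limit.

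\textbf{Choice of perturbation.} Take $M(\epsilon)=M+\epsilon M_1$, where $M_1$ is diagonal and generic. A diagonal $M_1$ is supported on $G$, so $M(\epsilon)$ is a generalized Laplacian for every $\epsilon$.

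Let $P$ be the orthogonal projection onto $E=E(M,\lambda_k)$. First-order perturbation theory says the $m$ eigenvalues of $M(\epsilon)$ near $\lambda_k$ are $\lambda_k+\epsilon\mu_i+O(\epsilon^2)$, where $\mu_1\le\cdots\le\mu_m$ are the eigenvalues of $PM_1P$ restricted to $E$. We want these $\mu_i$ to be distinct. For diagonal $M_1={\rm diag}(d_v)$ this compression is the quadratic form $\mec f\mapsto\sum_v d_v\mec f(v)^2$ on $E$. We must show that a generic choice of $(d_v)$ makes its spectrum simple.

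This is the main obstacle. The set of $(d_v)$ giving a repeated eigenvalue is algebraic (the discriminant vanishes), so it suffices to exhibit one good choice. The forms $\mec f(v)^2$ on $E$ span all quadratic forms on $E$ only when the vectors $(\mec f(v))_{v}$ are rich enough, which need not hold. If diagonal perturbations fail, I would instead use $M_1$ supported on $G$ with arbitrary entries on edges and the diagonal; these perturbations keep $M(\epsilon)$ a generalized Laplacian for small $|\epsilon|$, since edge entries stay negative. Their compressions $\sum M_1(u,v)\mec f(u)\mec g(v)$ form a larger family. If even that family misses all simple-spectrum forms, a second-order argument (Rellich) still splits the eigenvalues along analytic branches. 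As a fallback, one can simply use Rellich's theorem: analytic eigenvector branches exist for every symmetric analytic family, and genericity is needed only so that, for small $\epsilon\neq 0$, the eigenvalues $\lambda_{k+j}(M(\epsilon))$ are simple.

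\textbf{Limit argument.} By Rellich, choose analytic orthonormal eigenvector branches $\mec g_1(\epsilon),\ldots,\mec g_m(\epsilon)$ with $\mec g_i(0)$ the $\mu_i$-eigenvectors of the compression; these form an orthonormal basis of $E$. For small $\epsilon>0$, branch $i$ is the $(k+i-1)$-th eigenvalue of $M(\epsilon)$. For small $\epsilon<0$ the order reverses, so branch $i$ is the $(k+m-i)$-th eigenvalue. Fix $i$ and a generic $\epsilon$ of either sign.

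We would like Theorem~\ref{th_friedmans_nodal_region_theorem} to apply to $\mec g_i(\epsilon)$, but $\mec g_i(\epsilon)$ may vanish somewhere. To fix this, apply Urschel's theorem to the perturbed matrix: since $\lambda_{k+i-1}(M(\epsilon))$ is simple, its eigenspace is spanned by $\mec g_i(\epsilon)$, and Urschel's theorem gives ${\rm UN}(\mec g_i(\epsilon))\le k+i-1$ for $\epsilon>0$, and $\le k+m-i$ for $\epsilon<0$.

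Now pass to $\epsilon\to 0$. For small $\epsilon$, $\mec g_i(\epsilon)$ is a partial signing of $\mec g_i(0)$. Any signing of $\mec g_i(\epsilon)$ is therefore also a signing of $\mec g_i(0)$, so ${\rm UN}(\mec g_i(0))\le{\rm UN}(\mec g_i(\epsilon))$. Taking the better of the two signs of $\epsilon$ and setting $\mec f_{k+j}=\mec g_{j+1}(0)$ gives
$$
{\rm UN}(\mec f_{k+j})\le k+\min(j,m-1-j),
$$
which is \eqref{eq_UN_multiple_eigenvalue_two_sided_bound}.
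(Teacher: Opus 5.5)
Your strategy is the paper's: a diagonal perturbation $M+\epsilon M_1$ (admissible since it is supported on $G$), first-order splitting $\lambda_k+\epsilon\mu_i+O(\epsilon^2)$, a Urschel-type bound for the now simple perturbed eigenvalues, the partial-signing inequality ${\rm UN}(\mec g_i(0))\le{\rm UN}(\mec g_i(\epsilon))$, and both signs of $\epsilon$. You cite Urschel's theorem where the paper uses its Theorem~\ref{th_Urschel_reproved_with_two_signings}; either works.

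The gap is the step you flag as the main obstacle. You never show that some admissible $M_1$ gives $PM_1P|_E$ $m$ distinct eigenvalues, and your fallback does not prove the theorem. The two-sided bound rests on the branches reversing their order when $\epsilon$ changes sign, and this is only guaranteed when the first-order coefficients $\mu_i$ are distinct. If two Rellich branches first separate at even order, say their difference is $c\epsilon^{2p}+\cdots$, they keep the same order for $\epsilon<0$ as for $\epsilon>0$. You then obtain only ${\rm UN}(\mec f_{k+j})\le k+j$, and the improvement to $k+\min(j,m-1-j)$, which is the whole point of the theorem, is lost. Simplicity of the perturbed eigenvalues for small $\epsilon\neq0$, which genericity does give, is not enough.

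The missing ingredient is Lemma~\ref{le_perturb_subspace_E_at_a_diagonal_matrix}. Your worry that the forms $\mec f(v)^2$ need not span all quadratic forms on $E$ is beside the point: you need only one combination with simple spectrum.

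Let $Q$ be the $n\times m$ matrix whose columns form an orthonormal basis of $E$. The compression of ${\rm diag}(d_v)$ is $Q^{\rm T}{\rm diag}(d_v)Q=\sum_v d_v\mec q_v\mec q_v^{\rm T}$, where $\mec q_v\in\reals^m$ is the $v$-th row of $Q$. These rows span $\reals^m$, so choose $m$ linearly independent ones $\mec w_1,\ldots,\mec w_m$ and set $d_v=0$ at all other vertices. Let $\mec y_1,\ldots,\mec y_m$ be the dual basis ($\mec y_i\cdot\mec w_j=\delta_{ij}$); then the quadratic form evaluated at $\sum_i b_i\mec y_i$ equals $\sum_i d_ib_i^2$.

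Apply min--max over ${\rm span}(\mec y_1,\ldots,\mec y_i)$ and ${\rm span}(\mec y_i,\ldots,\mec y_m)$. This traps the $i$-th smallest eigenvalue between $C_2d_i$ and $C_1d_i$, with $C_1\ge C_2>0$ depending only on the $\mec w_i$. Hence choosing $d_1>0$ and $d_{i+1}>(C_1/C_2)d_i$ makes the $\mu_i$ distinct. With this lemma in place, your limit argument goes through as written.
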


\section{Urschel Vertices and Examples}
\label{se_urschel_examples}

In this section we give some examples that illustate our main results.

Example~\ref{ex_star_Urschel_example} discussed the star on
$n$ vertices, used in Section~\ref{se_main}
to illustrate a number of results regarding
Urschel numbers.


In Subsection~\ref{su_odd_length_paths} we use odd length paths
to illustrate
Theorem~\ref{th_max_urschel_number_simple_eig_all_shallow}, showing that
when a simple eigenvalue, $\lambda_k$, has a corresponding eigenvector, $\mec f_k$,
with no deep vertices, then
$$
\MUN(\mec f_k) \le k.
$$

In Subsection~\ref{su_simple_ladder_with_multiple_middle_rungs}
we describe examples, namely ``ladders,''
where $\Delta_G$ has eigenvalues $\lambda_3=\lambda_4=1$; here
${\rm UN}(\mec f)=3$ for all $\mec f\in E(\Delta_G,1)$ except on a set of
measure $0$, where ${\rm UN}(\mec f)=2$.  But perturbation theory finds
these exceptional $\mec f$ with ${\rm UN}(\mec f)=2$.
Ladders are also have $\MUN(\mec f)\ge n-2$ for each $\mec f\in E(\Delta_G,1)$,
although $1=\lambda_3=\lambda_4$ is a multiple eigenvalue.
$E(\Delta_G,1)$ here has $n-6$ deep Urschel vertices.

In Subsection~\ref{su_path_with_two_left_ends} we give a ``path with two
left ends''
on $n$ vertices such that $\lambda_k$ with $k$ roughly $n/3$
has $\MUN(\mec f_k)=n-1$; here $\lambda_k$ is a simple eigenvalue
for $n \bmod 3=-1,0$.  
In these cases, $E(\Delta_G,\lambda_k)$ has $n-4$ deep Urschel vertices.

The proofs of the claims regarding
Subsections~\ref{su_simple_ladder_with_multiple_middle_rungs} and
Subsection~\ref{su_path_with_two_left_ends} are given
in Appendix~\ref{ap_details_of_ladders_and_split_paths}.

\subsection{Example: Odd Length Paths}
\label{su_odd_length_paths}

For $n\in\naturals$ a {\em path on $n$ vertices}
is any graph, $G=(V_G,E_G)$ whose vertices can be arranged as
$V_G=\{v_1,\ldots,v_n\}$ such that $G$ has $n-1$ edges,
$$
E_G = \bigl\{ \{1,2\},\ \{2,3\},\ \ldots\ ,\ \{n-1,n\} \bigr\};
$$
we depict the path as
$$
\begin{tikzpicture}[scale=0.25]
\foreach \i in {1,...,5} {
  \filldraw (\i * 3,0) circle (5pt);
  \node at (\i*3,-1.5) {\Small $v_\i$};
}
\draw (3,0) to (16,0);
\node at (17.5,0) {$\cdots$};
\draw (19,0) to (23,0);
\filldraw (20,0) circle (5pt);
\filldraw (23,0) circle (5pt);
\node at (20,-1.5) {\Small $v_{n-1}$};
\node at (23,-1.5) {\Small $v_n$};
\end{tikzpicture}
$$
If we consider the generalized Laplacian $M$ given by
\begin{equation}\label{eq_generalized_Laplacian_for_path}
M(i,j) = 
\left\{ \begin{array}{ll} 
2 & \mbox{if $i=j$,} \\
-1 & \mbox{if $|i-j|=1$, and} \\
0 & \mbox{otherwise.} \\
\end{array}\right.
\end{equation} 
We caution the reader that $M$ is not the graph Laplacian, $\Delta_G$,
since $\Delta_G(1,1)=\Delta_G(n,n)=1$, not $2$.

In this case there is a standard trick to find
the eigenvalues of $M$: namely, we embed the path of $n$ vertices, $P_n$, in
$C=C_{2n+2}$, the
cycle of length $2n+2$: we number the cycle's vertices $0,1,\ldots,2n+1$,
and view the path as the induced subgraph on the vertex 
set $[n]=\{1,\ldots,n\}$.
Then we observe that if $\mec f$ is an eigenfunction of $\Delta_C$, the
Laplacian of $C=C_{2n+2}$, and if $\mec f(0)=\mec f(n+1)=0$, then $\mec f$ restricted
to $[n]$ is an eigenfunction of $M$ above, with the same eigenvalue.

It is now a standard result and easy to check 
that the $n$ eigenvalues of $\Delta_C$
with a corresponding eigenvector $\mec f$ satisfying $\mec f(0)=\mec f(n+1)=0$
are the 
eigenvalues 
$\lambda_m=2-2\cos(2\pi m/(2n+2))$ for $m\in[n]$, each with one
corresponding eigenvector $\mec f_m$ given by
$\mec f_m(j)=\sin(2\pi mj/(2n+2))$ for $j=0,1,\ldots,2n+1$.
Restricting the $\mec f_m$ to $[n]$ gives the desired eigenvectors
of $P_n$.

\begin{example}
Let $G=(V_G,E_G)$
be a path on $n$ vertices with $n$ odd and $M$ as in
\eqref{eq_generalized_Laplacian_for_path}. Then $\lambda_{(n+1)/2}=1$
has multplicity one, and an eigenvector, $\mec f$, is given as
$$
\begin{tikzpicture}[scale=0.25]
\foreach \i in {1,...,5} {
  \filldraw (\i * 3,0) circle (5pt);
  \node at (\i*3,-1.5) {\Small $v_\i$};
}
\draw (3,0) to (16,0);
\node at (17.5,0) {$\cdots$};
\draw (19,0) to (23,0);
\filldraw (20,0) circle (5pt);
\filldraw (23,0) circle (5pt);
\node at (20,-1.5) {\Small $v_{n-1}$};
\node at (23,-1.5) {\Small $v_n$};
\node at (0,1.5) {\Small $\mec f$};
\node at (3,1.5) {\Small $1$};
\node at (6,1.5) {\Small $0$};
\node at (9,1.5) {\Small $-1$};
\node at (12,1.5) {\Small $0$};
\node at (15,1.5) {\Small $1$};
\node at (20,1.5) {\Small $0$};
\node at (25,1.5) {\Small $(-1)^{(n-1)/2}$};
\end{tikzpicture}
$$
Using U and N to depict the Urschel and non-Urschel vertices, they are
$$
\begin{tikzpicture}[scale=0.25]
\foreach \i in {1,...,5} {
  \filldraw (\i * 3,0) circle (5pt);
  \node at (\i*3,-1.5) {\Small $v_\i$};
}
\draw (3,0) to (16,0);
\node at (17.5,0) {$\cdots$};
\draw (19,0) to (23,0);
\filldraw (20,0) circle (5pt);
\filldraw (23,0) circle (5pt);
\node at (20,-1.5) {\Small $v_{n-1}$};
\node at (23,-1.5) {\Small $v_n$};
\node at (0,1.5) {\Small $\mec f$};
\node at (3,1.5) {\Small $1$};
\node at (6,1.5) {\Small $0$};
\node at (9,1.5) {\Small $-1$};
\node at (12,1.5) {\Small $0$};
\node at (15,1.5) {\Small $1$};
\node at (20,1.5) {\Small $0$};
\node at (25,1.5) {\Small $(-1)^{(n-1)/2}$};
\node at (-6,3) {\Small Urschel versus non-Urschel};
\node at (3,3) {\Small N};
\node at (6,3) {\Small U};
\node at (9,3) {\Small N};
\node at (12,3) {\Small U};
\node at (15,3) {\Small N};
\node at (20,3) {\Small U};
\node at (23,3) {\Small N};
\end{tikzpicture}
$$
Hence $\mec f$ has only shallow Urschel vertices, since each Urschel vertex
is adjacent to a non-Urschel vertex.
It is easy to see that
$$
{\rm UN}(\mec f)=\MUN(\mec f)=(n+1)/2,
$$
i.e., any signing of $\mec f$ has $(n+1)/2$ nodal regions.
This illustrates
Theorem~\ref{th_max_urschel_number_simple_eig_all_shallow},
that whenever $\lambda_j$ is a simple, $j$-th eigenvalue of a generalized
Laplacian, then $\MUN(\mec f_j)\le j$ if $\mec f_j$ has no deep Urschel vertices.
\end{example}

\subsection{A Simple Ladder With
(Possibly) Multiple Rungs}
\label{su_simple_ladder_with_multiple_middle_rungs}

\begin{example}
Consider the following graph:
$$
\begin{tikzpicture}[scale=0.20]
\tikzmath{ 
  int \j ;
}
\foreach \i in {1,...,3} {
  \filldraw (0,6 - \i * 3) circle (5pt);
  \node at (-1.5,6 - \i * 3) {\Small $v_\i$};
  \filldraw (10,6 - \i * 3) circle (5pt);
  \tikzmath{
    \j = \i + 3;
  }
  \node at (11.5,6 - \i * 3) {\Small $v_\j$};
  \filldraw (5, 2 - \i * 1 ) circle (3pt);
}
\draw (0,3) to (0,-3);
\draw (10,3) to (10,-3);
\filldraw (5,5) circle (5pt);
\node at (5,6.5) {\Small $v_7$};
\filldraw (5,-5) circle (5pt);
\node at (5,-6.5) {\Small $v_n$};
\draw (0,0) to (5,5);
\draw (10,0) to (5,5);
\draw (0,0) to (5,-5);
\draw (10,0) to (5,-5);
\filldraw (5,3) circle (5pt);
\filldraw (5,-3) circle (5pt);
\draw (0,0) to (5,3);
\draw (10,0) to (5,3);
\draw (0,0) to (5,-3);
\draw (10,0) to (5,-3);
\end{tikzpicture}
$$
\end{example}
This graph consists of $n\ge 7$ vertices, and is the union of
two paths of $3$ vertices (with vertices $v_1,v_2,v_3$ in one path,
and $v_4,v_5,v_6$ in the other), plus vertices $v_7,\ldots,v_n$, each
of which is joined by an edge to $v_2$ and another to $v_5$.
In particular, the case of $n=7$ is a ladder with one middle rung:
$$
\begin{tikzpicture}[scale=0.20]
\tikzmath{ 
  int \j ;
}
\foreach \i in {1,...,3} {
  \filldraw (0,6 - \i * 3) circle (5pt);
  \node at (-1.5,6 - \i * 3) {\Small $v_\i$};
  \filldraw (10,6 - \i * 3) circle (5pt);
  \tikzmath{
    \j = \i + 3;
  }
  \node at (11.5,6 - \i * 3) {\Small $v_\j$};
}
\draw (0,3) to (0,-3);
\draw (10,3) to (10,-3);
\filldraw (5,0) circle (5pt);
\node at (5,1.5) {\Small $v_7$};
\draw (0,0) to (5,0);
\draw (10,0) to (5,0);
\end{tikzpicture}
$$
We easily see that 
$$
0 = \lambda_1 < \lambda_2 < \lambda_3=\lambda_4=1 < \lambda_5
$$
(for details, see 
Subsection~\ref{su_details_of_ladder_with_multiple_middle_rungs})
and the eigenvectors with eigenvalues $1=\lambda_3=\lambda_4$ is the
two-dimensional eigenspace
of functions $\mec f\from V\to\reals$ given by:
\begin{equation}
\label{eq_eigenvalue_one_for_ladder_with_multiple_rungs}
\begin{tikzpicture}[scale=0.20]
\node at (-22,0) {$\forall a,b\in\reals$,};
\tikzmath{ 
  int \j ;
}
\foreach \i in {1,...,3} {
  \filldraw (0,6 - \i * 3) circle (5pt);
  \filldraw (10,6 - \i * 3) circle (5pt);
  \tikzmath{
    \j = \i + 3;
  }
  \filldraw (5, 2 - \i * 1 ) circle (3pt);
}
\node[anchor=east]  at (-1,3) {\Small $\mec f(v_1)=a$};
\node[anchor=east] at (-1,0) {\Small $\mec f(v_2)=0$};
\node[anchor=east] at (-1,-3) {\Small $\mec f(v_3)=-a$};
\node[anchor=west] at (11,3) {\Small $\mec f(v_4)=b$};
\node[anchor=west] at (11,0) {\Small $\mec f(v_5)=0$};
\node[anchor=west] at (11,-3) {\Small $\mec f(v_6)=-b$};
\draw (0,3) to (0,-3);
\draw (10,3) to (10,-3);
\filldraw (5,5) circle (5pt);
\node at (5,6.5) {\Small $\mec f(v_7)=0$};
\filldraw (5,-5) circle (5pt);
\node at (5,-6.5) {\Small $\mec f(v_n)=0$};
\draw (0,0) to (5,5);
\draw (10,0) to (5,5);
\draw (0,0) to (5,-5);
\draw (10,0) to (5,-5);
\filldraw (5,3) circle (5pt);
\filldraw (5,-3) circle (5pt);
\draw (0,0) to (5,3);
\draw (10,0) to (5,3);
\draw (0,0) to (5,-3);
\draw (10,0) to (5,-3);
\end{tikzpicture}
\end{equation} 

It follows that for $\lambda_3=\lambda_4=1$, $v_7,\ldots,v_n$ are
deep Urschel vertices, and $v_2,v_5$ are shallow Urschel vertices.

Note that for $a,b$ both nonzero,
$\mec f$ in 
\eqref{eq_eigenvalue_one_for_ladder_with_multiple_rungs}
has ${\rm UN}(\mec f)={\rm UN}_2(\mec f)=3$ (by taking all vertices
$v_2,v_5$ and $v_7,\ldots,v_n$ to have the same sign) and
${\rm UN}_3(\mec f)=4$; also
$\MUN(\mec f)=n-2$ by taking
$v_2,v_5$ are signed positively, and $v_7,\ldots,v_n$ negatively.

Notice, however, that the function in
\eqref{eq_eigenvalue_one_for_ladder_with_multiple_rungs} with
$a\ne 0$ and $b=0$ has Urschel number $2$, and the same with
$a=0$ and $b\ne 0$.  It turns out that perturbation theory
will discover an eigenbasis of these two functions.

\subsection{Paths With Two Left Ends}
\label{su_path_with_two_left_ends}

Here is another class of graphs with deep Urschel points; this class
includes a graph with 5 vertices.  We depict these graphs as:
$$
\begin{tikzpicture}[scale=0.20]
\foreach \i in {3,...,5} {
  \filldraw (4*\i-8,0) circle (5pt);
  \node at (4*\i-8,-1.5) {\Small $v_\i$};
}
\filldraw (0,2) circle (5pt);
\filldraw (0,-2) circle (5pt);
\node [anchor=east] at (-1,2) {\Small $v_1$};
\node [anchor=east] at (-1,-2) {\Small $v_2$};
\draw (0,2) to (4,0);
\draw (0,-2) to (4,0);
\draw (4,0) to (14,0);
\node at (16,0) {$\cdots$};
\node at (20,-1.5) {\Small $v_n$};
\draw (18,0) to (20,0);
\filldraw (20,0) circle (5pt);
\end{tikzpicture}
$$
Hence this is a path of $n-1$ vertices,
i.e., with vertices $v_1,v_3,v_4,\ldots,v_n$,
plus a vertex $v_2$ that is incident upon a single edge $\{v_2,v_3\}$.
We shall call this graph a {\em an $n$-vertex path with a double left end}.

It is not hard to see that:
\begin{enumerate}
\item
For $n=3m-1$ and $n=3m$ with
$m\in\naturals$, the eigenvalue $1=\lambda_{m+1}$
has multiplicity $1$,
with eigenvector $\mec f$ given by
\begin{equation}\label{eq_deep_urschel_points_in_two_left_ended_path}
\begin{tikzpicture}[scale=0.20]
\node at (-15,0) {$\forall a\in \reals$};
\foreach \i in {3,...,5} {
  \filldraw (4*\i-8,0) circle (5pt);
}
\node [anchor=west] at (3,-2) {\Small $\mec f(v_3)=\mec f(v_4)=\cdots=\mec f(v_n)=0$};
\filldraw (0,2) circle (5pt);
\filldraw (0,-2) circle (5pt);
\node [anchor=east] at (-1,2) {\Small $\mec f(v_1)=a$};
\node [anchor=east] at (-1,-2) {\Small $\mec f(v_2)=-a$};
\draw (0,2) to (4,0);
\draw (0,-2) to (4,0);
\draw (4,0) to (14,0);
\node at (16,0) {$\cdots$};
\draw (18,0) to (20,0);
\filldraw (20,0) circle (5pt);
\end{tikzpicture}
\end{equation} 
Hence $v_3,v_4,\ldots,v_n$ are deep Urschel vertices.
By assigning $v_3,v_4,\ldots,v_n$ to have alternating signs,
one can get $n-1$ nodal domains.
\item
Curiously, for $n=3m+1$ and $m\in\naturals$, 
the eigenvalue $1=\lambda_{m+1}=\lambda_{m+2}$ has multiplicity $2$,
with the above eigenvector plus another
$$
\begin{tikzpicture}[scale=0.20]
\node at (-15,0) {$\forall a\in \reals$};
\foreach \i in {3,...,9} {
  \filldraw (4*\i-8,0) circle (5pt);
}
\filldraw (0,2) circle (5pt);
\filldraw (0,-2) circle (5pt);
\node [anchor=east] at (-1,2) {\Small $\mec f(v_1)=a$};
\node [anchor=east] at (-1,-2) {\Small $\mec f(v_2)=a$};
\node at (4,-2) {\Small $0$};
\node at (8,-2) {\Small $-2a$};
\node at (12,-2) {\Small $-2a$};
\node at (16,-2) {\Small $0$};
\node at (20,-2) {\Small $2a$};
\node at (24,-2) {\Small $2a$};
\node at (28,-2) {\Small $0$};
\draw (0,2) to (4,0);
\draw (0,-2) to (4,0);
\draw (4,0) to (30,0);
\node at (32,0) {$\cdots$};
\draw (4,0) to (30,0);
\draw (34,0) to (40,0);
\filldraw (36,0) circle (5pt);
\filldraw (40,0) circle (5pt);
\node at (36,-2) {\Small $0$};
\node at (42,-2) {\Small $2a(-1)^m$};
\end{tikzpicture}
$$
Hence, in this case the Urschel vertices are $v_3,v_6,\ldots,v_{3m}$
and there are no deep Urschel vertices.
\end{enumerate}
A detailed proof of these spectral results will be given
in Subsection~\ref{su_path_with_two_left_ends_details}.

\section{Perturbations Theory}
\label{se_perturbation}

In this section we review the perturbation theory we need, and prove an
important lemma (Lemma~\ref{le_perturb_subspace_E_at_a_diagonal_matrix})
that we will need in Section~\ref{se_gen_Laplacian_perturb_multiple}.

The following is standard perturbation theory; we will refer to
parts of \cite{kato}; there are simpler texts for the case of a simple
eigenvalue, but we will need \cite{kato} when it comes to multiple
eigenvalues.

Throughout most of this section
we work with arbitrary symmetric matrices $M_0,M_1\in \reals^{n\times n}$.
(In \cite{kato}, the matrices could be, more generally, 
complex Hermitian matrices.)

\subsection{Motivation for Using Perturbation Theory}

The key to relating perturbation theory to Urschel nodal domains
is the following simple proposition, whose proof is immediate.

\begin{proposition}
Let $G$ be a graph, and 
$\mec f^0,\mec f^1\in\reals^{V_G}$, and let $\mec f(\epsilon)$
be any family of elements of $\reals^{V_G}$ depending on a parameter
$\epsilon\in\reals$ such that for small $|\epsilon|$ we have
$$
\mec f(\epsilon)=\mec f^0+\epsilon\, \mec f^1+O(\epsilon^2),
$$
i.e.,
$$
\forall v\in V_G,\quad
\mec f(\epsilon)(v)=\mec f^0(v)+\epsilon \mec f^1(v) + O(\epsilon^2).
$$
Then,
\begin{enumerate}
\item
if $v\in{\rm NonUrschel}(\mec f^0)$, then for $|\epsilon|$ sufficiently
small, $v$ is a non-Urschel point of
$\mec f(\epsilon)$ with the same sign as $\mec f^0$ at $v$;
\item
if $v\in{\rm Urschel}(\mec f^0)$ and $f^1(v)\ne 0$, 
then for $|\epsilon|$ sufficiently
small and nonzero, $v$ is a non-Urschel point of 
{\myred $\mec f(\epsilon)$} with:
\begin{enumerate}
\item the same sign as $f^1(v)$ for $\epsilon>0$; and
\item the opposite sign as $f^1(v)$ for $\epsilon<0$.
\end{enumerate}
\end{enumerate}
\end{proposition}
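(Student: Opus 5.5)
The plan is to argue directly from the expansion $\mec f(\epsilon)(v)=\mec f^0(v)+\epsilon\,\mec f^1(v)+O(\epsilon^2)$, one vertex at a time, and then take the minimum of finitely many thresholds. Since $V_G$ is finite, it suffices to produce, for each $v\in V_G$, an $\epsilon_v>0$ such that the claimed sign statement at $v$ holds for $0<|\epsilon|<\epsilon_v$ (or for $|\epsilon|<\epsilon_v$ in case (1)). Then $\epsilon_0=\min_v\epsilon_v$ works simultaneously at all vertices. This simultaneity is the form in which the proposition will later be applied to partial signings.

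By the meaning of $O(\epsilon^2)$, for each $v$ there are constants $C_v>0$ and $\delta_v>0$ such that $|\mec f(\epsilon)(v)-\mec f^0(v)-\epsilon\,\mec f^1(v)|\le C_v\epsilon^2$ whenever $|\epsilon|<\delta_v$.

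For case (1), suppose $\mec f^0(v)\ne 0$. Then $|\mec f(\epsilon)(v)-\mec f^0(v)|\le |\epsilon|\,|\mec f^1(v)|+C_v\epsilon^2$. We choose $\epsilon_v\le\delta_v$ small enough that the right-hand side is less than $|\mec f^0(v)|$ for $|\epsilon|<\epsilon_v$. Then $\mec f(\epsilon)(v)$ is nonzero and has the sign of $\mec f^0(v)$, so $v$ is a non-Urschel point of $\mec f(\epsilon)$ with the same sign as $\mec f^0$ at $v$.

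For case (2), suppose $\mec f^0(v)=0$ and $\mec f^1(v)\ne 0$. Then $\mec f(\epsilon)(v)=\epsilon\bigl(\mec f^1(v)+r(\epsilon)\bigr)$ with $|r(\epsilon)|\le C_v|\epsilon|$. Take $\epsilon_v=\min\bigl(\delta_v,\ |\mec f^1(v)|/(2C_v)\bigr)$. For $0<|\epsilon|<\epsilon_v$ the factor $\mec f^1(v)+r(\epsilon)$ has the sign of $\mec f^1(v)$. Hence $\mec f(\epsilon)(v)$ is nonzero, with the sign of $\mec f^1(v)$ when $\epsilon>0$ and the opposite sign when $\epsilon<0$.

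There is no real obstacle here. The only point of care is that the thresholds must be uniform over the finitely many vertices, which is handled by the minimum above. One should also note that nothing is claimed at Urschel vertices with $\mec f^1(v)=0$.
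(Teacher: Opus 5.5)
Your proposal is correct and follows essentially the paper's proof. The paper also argues vertex by vertex that $a+\epsilon b+O(\epsilon^2)$ has the sign of $a$ when $a\ne 0$, and the sign of $\epsilon b$ when $a=0\ne b$, and it uses the finiteness of $V_G$ to get one threshold for all vertices. Your version just makes the thresholds explicit through the constant $C_v$ in the $O(\epsilon^2)$ term.
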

\begin{proof}
If $a,b\in\reals$, then if $a\ne 0$, then $a+\epsilon b$ has the same sign
as $a$ for $\epsilon$ sufficiently small; if $a=0$, then $a+\epsilon b$
has the same sign as $\epsilon b$.
We now apply this to $a=f^0(v)$ and $b=f^1(v)$ at each (of finitely many)
$v\in V_G$,
{\myred and note that the sign of $a+\epsilon b$ and 
$a+\epsilon b+O(\epsilon^2)$ are the same for small $|\epsilon|>0$
unless $a=b=0$.}
\end{proof}

\subsection{Perturbation Theory of Symmetric Matrices at a Simple or
Multiple Eigenvalue}
\label{su_general_perturbation_theory}

Let $M_0,M_1$ be two $n\times n$ symmetric matrices, and
for $\epsilon\in\reals$,
$$
M(\epsilon)=M_0+\epsilon M_1.
$$
Kato \cite{kato}, Section~II.6.1
refers to $M(\epsilon)$ with $\epsilon$ varying over 
the complex numbers as {\em symmetric} (since $M_0,M_1$ are symmetric).
In the case, Theorem~II.6.1 of \cite{kato} states that 
$M(\epsilon)$ has eigenvalues 
$\lambda_1(\epsilon),\ldots\lambda_N(\epsilon)$, which are distinct
holomorphic functions, 
and each eigenprojection, $P_h(\epsilon)$, i.e., the projection onto
$E(M(\epsilon),\lambda_h(\epsilon))$, for $h\in [N]$,
is holomorphic in $\epsilon$.
Moreover (first paragraph, Section~II.6.2) there are
holomorphic eigenfuctions $\phi_1(\epsilon),\ldots,\phi_n(\epsilon)$
for $M(\epsilon)$ (so if $N<n$, those eigenspaces of multiple eigenvalues,
$E(M(\epsilon),\lambda_h(\epsilon))$, are spanned by more than one
of the $\phi_1(\epsilon),\ldots,\phi_n(\epsilon)$.

\subsection{Perturbation Theory at a Simple Eigenvalue}

In this subsection we describe perturbation theory at a simple
eigenvalue.  What we claim the eigenvalue perturbation formula
\eqref{eq_lambda_one_as_ratio_involving_f_zero_and_M_one}
in this subsection is really a special case
of Theorem~\ref{th_perturbation_theory_at_a_multiple_eigenvalue}.
However, the analysis in this section is much simpler
and provides important intuition
for Theorem~\ref{th_perturbation_theory_at_a_multiple_eigenvalue}.

Say that $M_0,M_1\in\reals^{n\times n}$ are symmetric matrices,
and let
$$
M(\epsilon) = M_0 + \epsilon M_1.
$$
Say that $\lambda_0$ is a simple eigenvalue
of $M_0$, and let
$\mec f^0\in\reals^n\setminus\{\mec 0\}$ be a corresponding eigenfunction,
i.e., 
$M_0 \mec f^0 = \lambda_0 \mec f^0$.
Then by the general theory
(Subsection~\ref{su_general_perturbation_theory}),
for real $\epsilon$ sufficiently near $0$ there
are convergent power series
\begin{align}
\label{eq_f_expansion_at_espilon_near_zero}
\mec f(\epsilon)= \mec f^0 + \epsilon \mec f^1 + \epsilon^2 \mec f^2 + \cdots ,
\\
\label{eq_lambda_expansion_at_espilon_near_zero}
\lambda(\epsilon)= \lambda_0 + \epsilon \lambda_1 + \epsilon^2 \lambda_2
+ \cdots
\end{align} 
such that
\begin{equation}\label{eq_eigenpair_equation_depending_on_epsilon}
M(\epsilon)\mec f(\epsilon)=\lambda(\epsilon)\mec f(\epsilon) .
\end{equation} 
The order $\epsilon^1$ term of \eqref{eq_eigenpair_equation_depending_on_epsilon}
reads:
\begin{equation}\label{eq_epsilon_term_of_M_epsilon_eigenpair}
M_0 \mec f^1 + M_1 \mec f^0  = \lambda_0 \mec f^1 + \lambda_1 \mec f^0.
\end{equation} 
Of course, we may always multiply $\mec f(\epsilon)$ by a scalar power
series $p(\epsilon)=1+\epsilon p_1 + \epsilon^2 p_2 + \cdots$,
so $\mec f(\epsilon)$ is
not uniquely determined; however, the effect of this is to replace
$\mec f^1$ by $\mec f^1 + p_1 \mec f^0$, and hence
$\mec f^1$ is uniquely determined if we insist that $\mec f^1$ is orthogonal
to $\mec f^0$;
assume so.
Taking the dot product of \eqref{eq_epsilon_term_of_M_epsilon_eigenpair}
with $\mec f_0$ therefore yields
$$
(\mec f^0,M_0 \mec f^1) + (\mec f^0, M_1 \mec f^0)  
= \lambda_0 (\mec f^0,\mec f^1) + \lambda_1 (\mec f^0,\mec f^0),
$$
and therfore (using $(\mec f^0,M_0 \mec f^1)=(M_0 \mec f^0,\mec f^1)=(\lambda_0 \mec f^0,\mec f^1)$)
we get
\begin{equation}\label{eq_lambda_one_as_ratio_involving_f_zero_and_M_one}
\lambda_1 = \frac{(\mec f^0,M_1 \mec f^0)}{(\mec f^0,\mec f^0)}.
\end{equation} 

(Notice that this is the Rayleigh quotient of $M_1$ applied to $\mec f^0$,
and all the above are well-known; for a formal proof, note that all the
above can be derived from
Theorem~\ref{th_perturbation_theory_at_a_multiple_eigenvalue} below,
whose proof we will give from various parts of
\cite{kato}.)

This also gives
$$
(M_0-\lambda_0 I) \mec f^1 = (\lambda_1 I - M_1)\mec f^0,
$$
and therefore 
\begin{equation}\label{eq_f_one_as_pseudoinverse_with_M_one_f_zero}
\mec f^1 = (M_0-\lambda_0 I)^+ (\lambda_1 I - M_1)\mec f^0 
=
(M_0-\lambda_0 I)^+ (- M_1 \mec f^0),
\end{equation} 
where $(M_0-\lambda_0 I)^+$ is the pseudo-inverse of $M_0-\lambda_0 I$.

\subsection{Perturbation at a Multiple Eigenvalue}

The perturbation theory at a multiple eigenvalue is more subtle.

\begin{theorem}\label{th_perturbation_theory_at_a_multiple_eigenvalue}
Say that $M_0,M_1$ are $n\times n$ symmetric, and $\lambda_0$ is an
eigenvalue of $M_0$ of multiplicity $m\ge 1$, and let $E(M_0,\lambda_0)$
be the eigenspace corresponding to $\lambda_0$;
let $P=P^{\rm T}=P^2$ be the orthogonal projection onto $E(M_0,\lambda_0)$.
Say that $PM_1P\from E\to E$ (which is a symmetric operator on $E$)
has distinct eigenvalues
$$
\lambda_1'<\lambda_2'<\cdots<\lambda_m'.
$$
Then for small nonzero $|\epsilon|$, $M(\epsilon)=M_0+\epsilon M_1$
has distict eigenvalues
$$
\lambda_j(\epsilon)=\lambda_0 + \epsilon \lambda_j' + O(\epsilon^2)
$$
and corresponding to $\lambda_j(\epsilon)$ is an eigenvector
$$
\mec f_j(\epsilon)=\mec f_j^0+\epsilon \mec f_j^1 + O(\epsilon),
$$
where $\mec f_j^0,\mec f_j^1\in\reals^n$ and
$\mec f_1^0,\ldots,\mec f_m^0$ are mutually orthogonal.
\end{theorem}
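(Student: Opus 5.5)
The proof rests on Kato's analytic perturbation theory for a symmetric pencil, as recalled in Subsection~\ref{su_general_perturbation_theory}. It will not attempt any estimates from scratch.

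\textbf{Analytic branches.} Since $M(\epsilon)=M_0+\epsilon M_1$ is symmetric for real $\epsilon$, Theorem~II.6.1 and Section~II.6.2 of \cite{kato} give holomorphic eigenvalue functions $\lambda_1(\epsilon),\ldots,\lambda_n(\epsilon)$, repeated according to multiplicity. They also give holomorphic eigenvectors $\phi_1(\epsilon),\ldots,\phi_n(\epsilon)$ that are orthonormal for real $\epsilon$; orthonormality holds because the eigenprojections are holomorphic and orthogonal on the real axis.

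By continuity of the spectrum, exactly $m$ of these branches satisfy $\lambda_j(0)=\lambda_0$; relabel them $j=1,\ldots,m$. Their values $\mec f_j^0:=\phi_j(0)$ are orthonormal and lie in $E=E(M_0,\lambda_0)$. Because there are $m$ of them, they form an orthonormal basis of $E$. Expanding each branch gives
$$
\lambda_j(\epsilon)=\lambda_0+\epsilon\mu_j+O(\epsilon^2),\qquad
\phi_j(\epsilon)=\mec f_j^0+\epsilon\mec f_j^1+O(\epsilon^2).
$$

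\textbf{Identifying the first-order coefficients.} Take the order-$\epsilon$ term of $M(\epsilon)\phi_j(\epsilon)=\lambda_j(\epsilon)\phi_j(\epsilon)$:
$$
(M_0-\lambda_0 I)\mec f_j^1 + M_1\mec f_j^0 = \mu_j \mec f_j^0 .
$$
Apply $P$ and use $P(M_0-\lambda_0 I)=0$, which holds since $P$ commutes with $M_0$ and kills its range off $E$. This gives $PM_1P\mec f_j^0=\mu_j\mec f_j^0$. Hence each $\mec f_j^0$ is an eigenvector of $PM_1P$ restricted to $E$, with eigenvalue $\mu_j$.

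The $\mec f_j^0$ form a basis of $E$, and $PM_1P$ has distinct eigenvalues $\lambda_1'<\cdots<\lambda_m'$. So the multiset $\{\mu_j\}$ equals $\{\lambda_i'\}$; relabel so that $\mu_j=\lambda_j'$. This step is essentially Kato's reduction process (Section~II.2.3), done here by hand. It also recovers \eqref{eq_lambda_one_as_ratio_involving_f_zero_and_M_one} when $m=1$.

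\textbf{Distinctness for small $\epsilon\ne0$.} For $j\ne j'$ among the first $m$ branches,
$$
\lambda_j(\epsilon)-\lambda_{j'}(\epsilon)=\epsilon(\lambda_j'-\lambda_{j'}')+O(\epsilon^2),
$$
which is nonzero for small $|\epsilon|>0$. The remaining $n-m$ branches stay a positive distance from $\lambda_0$ near $\epsilon=0$, so they cannot collide with these $m$. Therefore $\lambda_1(\epsilon),\ldots,\lambda_m(\epsilon)$ are distinct, and $\phi_j(\epsilon)$ is the required eigenvector. The statement's ``$O(\epsilon)$'' remainder is in fact $O(\epsilon^2)$.

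The main obstacle is not computational. It is justifying from \cite{kato} that the holomorphic eigenvectors can be chosen orthonormal and that exactly $m$ branches pass through $\lambda_0$. I would cite Kato's results on the symmetric case (the eigenprojections $P_h(\epsilon)$ are holomorphic and orthogonal for real $\epsilon$, Section~II.6.2) and the continuity of the total projection for the $\lambda_0$-group (Section~II.1.4).
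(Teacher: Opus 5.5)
Your proof is correct, but it divides the labor differently from the paper. The paper proves the statement purely by citation, in two pieces:
\begin{itemize}
\item the eigenvalue splitting $\lambda_j(\epsilon)=\lambda_0+\epsilon\lambda_j'+o(\epsilon)$ comes from Kato's Theorem~6.8 in Section~II.6.3, a result for merely $C^1$ symmetric families in which Kato's $PT'(0)P$ plays the role of $PM_1P$;
\item the orthonormality of $\mec f_1^0,\ldots,\mec f_m^0$ comes separately from the holomorphic orthonormal eigenvectors of Sections~II.6.1--II.6.2, evaluated at $\epsilon=0$.
\end{itemize}
You use only the second, Rellich-type, input. You then derive the first-order coefficients yourself, by applying $P$ to the order-$\epsilon$ equation and using $P(M_0-\lambda_0 I)=0$ together with the distinctness of the $\lambda_i'$.

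Your route buys several things.
\begin{itemize}
\item It is more self-contained. It avoids a differentiable-family theorem whose extra generality is useless here, since analyticity is needed anyway for the eigenvectors; the paper's own footnote on Kato's Example~II.5.3 concedes this.
\item It explicitly attaches each eigenvector branch $\phi_j(\epsilon)$ to the eigenvalue branch with slope $\lambda_j'$. The paper's two separate citations leave that matching implicit.
\item It gives the $O(\epsilon^2)$ remainders directly.
\item It proves that the $\mec f_j^0$ are exactly the eigenvectors of $PM_1P$ on $E$. The paper states this only heuristically, in the bracketed remark after the theorem, but then relies on it in the ladder example.
\end{itemize}
The paper's route buys brevity.

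One simplification: you do not need continuity of the spectrum to see that exactly $m$ branches pass through $\lambda_0$. The vectors $\phi_1(0),\ldots,\phi_n(0)$ already form an orthonormal eigenbasis of $M_0$ with eigenvalues $\lambda_1(0),\ldots,\lambda_n(0)$, so the count is immediate.
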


For a proof of this theorem, see
Theorem~6.8, Section~II.6.3 (page~141) of \cite{kato} for the
statement regarding $\lambda_j(\epsilon)$;\footnote{
  It is a bit hard to extract this theorem from Kato's textbook
  \cite{kato}, so let us give the translation of terms:
  in \cite{kato}, $T(x)$ is symmetric and continuously differentiable
  in $x$; hence $T(0)$ is our $M_0$, and $T'(0)$ is our $M_1$, and
  $T(x)=M_0+xM_1$.  Kato's $PT'(0)P$ is our $PM_1P$, and Kato's
  $\textsf{M}$ is our $E$.
  We are assuming that the eigenvalues of $PM_1P$ on $E$ are distinct,
  in which case the $m$ eigenvalues are $\lambda'_1,\ldots,\lambda'_m$
  which is Kato's notation in the special case $p=m$ there.
  So each of Kato's ``$\lambda+x\lambda_j'$-group'' (see page~93 of
  \cite{kato} for this definition) refers to a simple
  eigenvalue given as 
  $\lambda_j(\epsilon)=\lambda+\epsilon\lambda_j'+o(\epsilon)$.
  Since we are assuming $T(x)=M_0+xM_1$, which is infinitely differentiable
  in $x$, we can replace the $o(\epsilon)$ with $O(\epsilon^2)$; but the
  $o(\epsilon)$ would be just as good for us in our applications.
  }
see also 
the beginning of Subsection~\ref{su_general_perturbation_theory},
where we quote Sections~II.6.1 and~II.6.2 of \cite{kato}, which shows 
that $\mec f_1(\epsilon),\ldots,\mec f_m(\epsilon)$ are orthonormal for all
$\epsilon$, and therefore are orthonormal for $\epsilon=0$, and therefore
$\mec f_1^0,\ldots,\mec f_m^0$ are orthonormal.\footnote{
  We remark that Section~II.6.3 of \cite{kato} assumes only that
  $M(\epsilon)$ is continuously differentiable in $\epsilon$; but in this
  case, Example~II.5.3 (page~128), shows that even if $M(\epsilon)$ is
  infinitely differentiable in $\epsilon$, but not holomorphic in
  $\epsilon$, then the eigenprojections cannot necessarily be extended
  continuously in a neighbourhood of $\epsilon=0$.
  }

[Intuitively, 
the theorem above generalizes the formula for a simple
eigenvalue $\lambda_0$ whose perturbed eigenvalue
$\lambda_0+\epsilon\lambda_1 + O(\epsilon^2)$ is given by
$$
\lambda_1 = \cR_{M_1}(\mec f^0) 
\quad\mbox{where}\quad
\cR_{M_1}(\mec f)\eqdef \frac{(\mec f,M_1\mec f)}{(\mec f,\mec f)}.
$$
In the case of a multiple eigenvalue, we consider the critical
points of $\cR_{M_1}(\mec f)$ where $\mec f$ ranges over $E(M_0,\lambda_0)$.
However, $M_1$ is not a map from $E(M_0,\lambda_0)$ to itself, but
$PM_1$ is, and 
we have $(\mec f,M_1\mec f)=(\mec f,PM_1\mec f)$ for all 
$\mec f\in E(M_0,\lambda_0)$.
Hence the critical points of $\cR_{M_1}(\mec f)$
are the same as the eigenvectors of
$PM_1\from E(M_0,\lambda_0)\to E(M_0,\lambda_0)$;
equivalently, these are the eigenvectors of $PM_1P$ (a map
$\reals^n\to\reals^n$) that lie in $E(M_0,\lambda_0)$.]

Hence the perturbation theory at a multiple eigenvalue $\lambda_0$ of
$M_0$ is much like that of a simple eigenvalue, provided that
$PM_1$ has only simple eigenvalues.
Otherwise, the multiple eigenvalue can persist, or else some or all of the
eigenvalues can separate, but this separation of eigenvalues
requires one to look at the
order $\epsilon^k$ coefficient for $k\ge 2$.



To perturb around a multiple eigenvalue of a generalized Laplacian, we
will need the following technical lemma.

\begin{lemma}\label{le_perturb_subspace_E_at_a_diagonal_matrix}
Let $E\subset \reals^n$ be a subspace of dimension $m$, and let
$P\from\reals^n\to E$ be the orthogonal projection.
Then there exists a diagonal matrix, $D$, such that 
$PD\from E\to E$ has $m$ distinct nonzero eigenvalues
(or, equivalently, the map $PDP\from\reals^n\to\reals^n$ have
$m$ distinct nonzero eigenvalues).
\end{lemma}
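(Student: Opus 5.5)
Choose an orthonormal basis $\mec u_1,\ldots,\mec u_m$ of $E$ and write the compressed operator as an $m\times m$ symmetric matrix. For a diagonal matrix $D={\rm diag}(d_1,\ldots,d_n)$, the matrix of $PD$ restricted to $E$ in this basis is
$$
B(\mec d) = \sum_{i=1}^n d_i\, \mec w_i \mec w_i^{\rm T},
$$
where $\mec w_i\in\reals^m$ is the vector with entries $\mec w_i(j)=\mec u_j(i)$. Equivalently, $B(\mec d)=U^{\rm T} D U$, where $U$ is the $n\times m$ matrix with columns $\mec u_j$. The rows $\mec w_i^{\rm T}$ of $U$ span $\reals^m$ because $U$ has rank $m$. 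So we must choose $\mec d$ such that the symmetric matrix $B(\mec d)$ has $m$ distinct nonzero eigenvalues.

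The plan is a genericity argument. Both conditions are polynomial in $\mec d$. \emph{Nonzero eigenvalues} means $\det B(\mec d)\ne 0$. \emph{Distinct eigenvalues} means the discriminant of the characteristic polynomial of $B(\mec d)$ is nonzero. Each is a polynomial in $d_1,\ldots,d_n$. A nonzero polynomial vanishes only on a proper algebraic subset, so a finite union of such sets cannot cover $\reals^n$. It therefore suffices to exhibit, separately for each condition, a single $\mec d$ where the polynomial is nonzero. A generic $\mec d$ then satisfies both conditions at once.

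For the determinant, take $\mec d=(1,\ldots,1)$. Then $B=U^{\rm T}U=I_m$, which is invertible.

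For the discriminant I need a $\mec d$ with $m$ distinct eigenvalues, and this is the main step. First pick $m$ rows $\mec w_{i_1},\ldots,\mec w_{i_m}$ forming a basis of $\reals^m$, and let $W$ be the invertible matrix with these rows. Set $d_{i_j}=t^{j}$ and all other $d_i=0$, so that $B=W^{\rm T}\,{\rm diag}(t,t^2,\ldots,t^m)\,W$. I would argue by induction on $m$, or via a scaling argument: as $t\to\infty$, the eigenvalues of $B$ separate at scales $t^m, t^{m-1},\ldots$ by the Courant--Fischer min-max. Concretely, $\lambda_{\max}\sim t^m\|\mec w_{i_m}\|^2$, and after restricting to the orthogonal complement of the top direction the remaining eigenvalues are of lower order. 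This yields $m$ distinct eigenvalues for $t$ large.

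A cleaner alternative that avoids asymptotics is to use the \emph{perturbation theory} of Theorem~\ref{th_perturbation_theory_at_a_multiple_eigenvalue} inductively. Suppose $B_0$ has eigenvalue $\mu$ of multiplicity $r\ge2$ with eigenspace $F$. Since the $\mec w_i$ span $\reals^m$, some $\mec w_i$ has a nonzero, non-parallel projection onto $F$, or else $F$ lies in a combination of span directions. Adding $\epsilon\,\mec w_i\mec w_i^{\rm T}$ compresses to a rank-one operator on $F$ with eigenvalues $\|P_F\mec w_i\|^2\ne0$ and $0$ (repeated). This strictly reduces the total multiplicity defect for small $\epsilon$, while distinct eigenvalues stay distinct. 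Iterating finitely often gives distinct eigenvalues.

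I expect this separation step to be the only real obstacle. The rank-one splitting needs $P_F\mec w_i\ne0$ for some $i$. That holds because if $P_F\mec w_i=0$ for all $i$, then $F\perp\mec w_i$ for all $i$, contradicting that the $\mec w_i$ span $\reals^m$. Each such step splits off at least one eigenvalue from a multiple cluster, so the process terminates.
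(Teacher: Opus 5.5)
Your reduction to $B(\mec d)=U^{\rm T}DU=\sum_i d_i\mec w_i\mec w_i^{\rm T}$ is the paper's Lemma~\ref{le_reduce}. Your first route (weights $t,t^2,\ldots,t^m$ on $m$ linearly independent rows, then Courant--Fischer) is the paper's Lemma~\ref{le_distinct_eigenvalues}. As written, though, your deflation only isolates the top eigenvalue. What is needed is a two-sided bound $c\,t^k\le\lambda_k\le C\,t^k$ for every $k$.

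The paper gets this bound from the dual basis $\mec y_j\cdot\mec w_{i_l}=\delta_{jl}$. For $\mec v=\sum_j b_j\mec y_j$ one has $\mec v\cdot B\mec v=\sum_j b_j^2t^j$. Using ${\rm span}(\mec y_1,\ldots,\mec y_k)$ and ${\rm span}(\mec y_k,\ldots,\mec y_m)$ as test spaces in the min-max and max-min principles gives the bound, and hence separation once $t>C/c$. On this route $B$ is positive definite, so the eigenvalues are automatically nonzero and the determinant half of your genericity argument is not needed.

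Your second route, iterated rank-one splitting, is genuinely different and does work, but not by Theorem~\ref{th_perturbation_theory_at_a_multiple_eigenvalue}. That theorem assumes the compressed operator has distinct eigenvalues. Yours has $\|P_F\mec w_i\|^2$ together with $0$ repeated $r-1$ times, so the hypothesis fails for $r\ge3$. The step is easy to justify directly:
\begin{itemize}
\item If $(B_0+\epsilon\mec w\mec w^{\rm T})\mec x=\mu\mec x$, then $(B_0-\mu I)\mec x=-\epsilon(\mec w\cdot\mec x)\mec w$, and the left side lies in $F^\perp$.
\item Hence $(\mec w\cdot\mec x)P_F\mec w=0$, which forces $\mec w\cdot\mec x=0$ and $\mec x\in F\cap\mec w^\perp$. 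So for $\epsilon\ne0$, $\mu$ has multiplicity exactly $r-1$.
\item By Weyl's inequality, exactly $r$ eigenvalues stay within $|\epsilon|\,\|\mec w\|^2$ of $\mu$, and the other clusters stay apart. So the number of distinct eigenvalues strictly increases.
\end{itemize}

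What your framing buys over the paper is the polynomial argument. Once any single good $\mec d$ exists (by either route), the discriminant and $\det B(\mec d)$ are nonzero polynomials. So every $\mec d$ outside a proper algebraic set works, i.e.\ a generic diagonal perturbation works. This is stronger than the paper's explicit but special choice (supported on $m$ coordinates with $d_{i+1}>Cd_i$), and it speaks to the paper's remark that its multiple-eigenvalue perturbations are not generic. The paper's route, in turn, is explicit and quantitative and needs no iteration.
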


The proof is a bit technical, and will be given in 
Subsection~\ref{su_proof_of_perturb_subspace_E_at_a_diagonal_matrix}.

When we apply Lemma~\ref{le_perturb_subspace_E_at_a_diagonal_matrix},
$E$ will be the eigenspace of a multiple eigenvalue.

\begin{example}
Let $E$ be the span of $\mec e_1,\ldots,\mec e_m$.
Then $D$'s values only matter at its first $m$ diagonal entries,
and the eigenvectors of $PD$ with $m$ distinct eigenvalues are
necessarily $\mec e_1,\ldots,\mec e_m$.
Hence there is an subspace $E$ in 
Lemma~\ref{le_perturb_subspace_E_at_a_diagonal_matrix} 
where the set of eigenvectors is independent of $D$.
For this reason, the perturbation method
does not generally produce a codimension zero subset of elements, $\mec f_k$, in
$E(M,\lambda_k)$ with ${\rm UN}(\mec f_k)\le k$ (unlike Urschel's method);
an example of this is the ladder
(see Section~\ref{se_ladder_again}).
\end{example}

\subsection{Proof of Lemma~\ref{le_perturb_subspace_E_at_a_diagonal_matrix}}
\label{su_proof_of_perturb_subspace_E_at_a_diagonal_matrix}

\begin{lemma}\label{le_reduce}
Let $E\subset\reals^n$ be an $m$-dimensional subspace. 
Let $\mec v_1,\ldots,\mec v_m$ be an orthonormal basis for $E$,
and let
$Q$ be the matrix whose columns are $\mec v_1,\ldots,\mec v_m$.
Then for any symmetric matrix $M_1$, the eigenvalues of $PM_1P$
restricted to $E$ are the same as the eigenvalues of the
$m\times m$ matrix $Q^{\rm T}M_1 Q$ on $\reals^m$.
\end{lemma}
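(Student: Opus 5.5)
The plan is to reduce the operator $PM_1P$ on $E$ to an $m\times m$ matrix by writing everything in the orthonormal basis $\mec v_1,\ldots,\mec v_m$. First, $Q$ is $n\times m$ with orthonormal columns, so $Q^{\rm T}Q=I_m$. Next, the orthogonal projection onto $E$ is $P=QQ^{\rm T}$: this matrix is symmetric and idempotent (since $QQ^{\rm T}QQ^{\rm T}=QQ^{\rm T}$), it fixes each $\mec v_i=Q\mec e_i$, and it kills every vector orthogonal to $E$.

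I will then use the isometry $Q\from\reals^m\to E$, $\mec x\mapsto Q\mec x$, which is a bijection with inverse $Q^{\rm T}$ restricted to $E$. For $\mec x\in\reals^m$,
$$
PM_1P(Q\mec x) = QQ^{\rm T}M_1QQ^{\rm T}Q\mec x = Q\,(Q^{\rm T}M_1Q)\,\mec x .
$$
So $PM_1P|_E = Q\,(Q^{\rm T}M_1Q)\,Q^{-1}$, where $Q^{-1}$ denotes the inverse of $Q\from\reals^m\to E$. The restriction of $PM_1P$ to $E$ is therefore conjugate, via this linear isomorphism, to $Q^{\rm T}M_1Q$ acting on $\reals^m$. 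Conjugate operators have the same eigenvalues with the same multiplicities, since they have the same characteristic polynomial. Concretely, $Q^{\rm T}M_1Q\,\mec x=\mu\mec x$ holds if and only if $PM_1P(Q\mec x)=\mu Q\mec x$, and $\mec x\mapsto Q\mec x$ matches the eigenspaces.

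Finally, I will note that $Q^{\rm T}M_1Q$ is symmetric, which is consistent with $PM_1P$ being a symmetric operator on $E$. The argument is routine linear algebra. The only point needing care is to say explicitly that the claim concerns the restriction of $PM_1P$ to $E$, and that $Q$ is injective because its columns are orthonormal; this is what makes the conjugation legitimate. On all of $\reals^n$, $PM_1P$ additionally has the eigenvalue $0$ on $E^\perp$.
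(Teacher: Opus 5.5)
Your proof is correct and is essentially the paper's argument: both use $P=QQ^{\rm T}$ and $Q^{\rm T}Q=I_m$ to identify $E$ isometrically with $\reals^m$, and then intertwine $PM_1P$ restricted to $E$ with $Q^{\rm T}M_1Q$. The only difference is minor. The paper takes an orthonormal eigenbasis of $PM_1P$ on $E$ and maps it forward by $Q^{\rm T}$, whereas you write the similarity $PM_1P|_E=Q\,(Q^{\rm T}M_1Q)\,Q^{-1}$ directly, which yields equal characteristic polynomials (hence equal multiplicities) without invoking the spectral theorem.
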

\begin{proof}
Note that $P=QQ^{\rm T}$ since $QQ^{\rm T}$ fixes
each $\mec v_i$ and $Q^{\rm T}\mec w=0$ if $w\in E^\perp$.
Since $PM_1P$ is a symmetric operator when restricted to $E$
(since $PM_1P$ is symmetric on $\reals^n$ and
$P$ takes $E^\perp$ to $0$), there is
an orthonormal basis $\mec u_1,\ldots,\mec u_m$ of $E$
of eigenvectors, namely
$PM_1P\mec u_i=\lambda_i\mec u_i$ (so $\lambda_1,\ldots,\lambda_m$ are
the eigenvalues of $PM_1P$ restricted of $E$).
Note that $Q^T$ is a map $\reals^n\to\reals^m$ that takes each element
of $E^\perp$ to $0$, and is an isomorphism $E\to\reals^m$ when restricted
to $E$ (taking each $\mec u_i$ to the $i$-th standard basis vector).
Moreover, $Q^{\rm T}$ preserves the dot product on $E$, since
it takes the orthonormal basis $\mec u_1,\ldots,\mec u_m$ to
the standard basis vectors of $\reals^m$ that are an orthonormal
basis for $\reals^m$.

Now, if 
$PM_1P\mec u_i=\lambda_i\mec u_i$, then
$$
QQ^{\rm T} M_1 QQ^{\rm T} \mec u_i = \lambda_i \mec u_i,
$$
and since the RHS lies in $E$, we can apply the isometry $Q^{\rm T}$
to the left of both sides and we find
$$
\lambda_i Q^{\rm T}\mec u_i = 
Q^{\rm T}QQ^{\rm T} M_1 QQ^{\rm T} \mec u_i 
=
Q^{\rm T} M_1 QQ^{\rm T} \mec u_i
$$
since we easily see that $Q^{\rm T}Q=I_m$ is the identity on $\reals^m$.
Hence $\mec w_i=Q^{\rm T}\mec u_i$ satisfies
$Q^{\rm T}M_1 Q \mec w_i=\lambda_i \mec w_i$,
and by isomotry, $\mec w_1,\ldots,\mec w_m$ is an orthonormal basis
for $\reals^m$ with the same eigenvalues on $E$ as $PM_1P$.
\end{proof}

Here is the main lemma.

\begin{lemma}\label{le_distinct_eigenvalues}
Let $\mec w_1,\ldots,\mec w_m\in\reals^m$ be any linearly independent
vectors.  Then for some $d_1,\ldots,d_m$, the matrix
$$
W(d_1,\ldots,d_m)
=\sum_{i=1}^m d_i \mec w_i \mec w_i^{\rm T}
$$
(which is clearly symmetric) has distinct eigenvalues.
More precisely, there is a $C>0$ such that this is always the case
provided that $d_1>0$ and $d_{i+1}> C d_i$ for $i\in[m]$.
\end{lemma}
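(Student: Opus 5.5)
Induction on $m$, peeling off one eigenvalue at a time using a flag of subspaces adapted to $\mec w_1,\ldots,\mec w_m$. Set $V_j={\rm Span}\{\mec w_1,\ldots,\mec w_j\}$, which has dimension $j$ by linear independence. The heuristic is that when $d_m\gg d_{m-1}\gg\cdots\gg d_1$, the matrix $W=W(d_1,\ldots,d_m)$ has one eigenvalue of order $d_m$, with eigenvector near $\mec w_m$. The remaining eigenvalues are essentially those of the compression of $W_{m-1}=\sum_{i<m} d_i\mec w_i\mec w_i^{\rm T}$ to the orthogonal complement of $\mec w_m$. There the vectors $\mec w_1,\ldots,\mec w_{m-1}$ project to linearly independent vectors $\mec w_i'$, because $\mec w_m\notin V_{m-1}$.

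\textbf{Step 1: a clean form of the induction.} I avoid perturbative error estimates by using the Courant--Fischer min-max characterization directly. All $d_i>0$, so $W$ is positive semidefinite of rank $m$, and all of its eigenvalues are positive. I would prove the following stronger statement by induction on $m$: there are constants $0<a_m\le b_m$, depending only on the $\mec w_i$, such that if $d_1>0$ and $d_{i+1}>Cd_i$ for all $i$, then the $j$-th largest eigenvalue $\mu_j$ of $W$ satisfies
\[
a_m d_{m+1-j}\le \mu_j\le b_m d_{m+1-j}.
\]
Given the strict separation $d_{i+1}>Cd_i$ with $C>b_m/a_m$, this forces $\mu_1>\mu_2>\cdots>\mu_m$, so the eigenvalues are distinct.

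\textbf{Step 2: upper bound.} Apply min-max with the subspace $V_{m-j}^\perp$, which has dimension $j$. On this subspace the terms with $i\le m-j$ vanish, so
\[
\mu_j\le \max_{\mec x\perp V_{m-j},\,|\mec x|=1}\ \sum_{i>m-j} d_i(\mec w_i\cdot\mec x)^2\le \Bigl(\sum_i|\mec w_i|^2\Bigr)\, d_m .
\]
This is too weak: it gives the scale $d_m$, but the scale I want is $d_{m+1-j}$. So I would instead run the index the other way. Use min-max for the $j$-th smallest eigenvalue with the test space $V_j$, of dimension $j$:
\[
\nu_j\le \max_{\mec x\in V_j,\,|\mec x|=1} \mec x^{\rm T}W\mec x .
\]
This is still not small, because $\mec w_m$ need not be orthogonal to $V_j$.

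The correct choice is to orthogonalize in reverse order. Let $U_j=(\mec w_{j+1},\ldots,\mec w_m)^\perp$, which has dimension $j$. For $\mec x\in U_j$ only the terms $i\le j$ survive, so $\mec x^{\rm T}W\mec x\le Kd_j$, and hence the $j$-th smallest eigenvalue satisfies $\nu_j\le Kd_j$.

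\textbf{Step 3: lower bound (the main obstacle).} I need $\nu_j\ge a d_j$. By max-min, it suffices to show that every $(m-j+1)$-dimensional subspace $S$ contains a unit vector $\mec x$ with $\mec x^{\rm T}W\mec x\ge a d_j$. Since $\mec x^{\rm T}W\mec x\ge d_j\sum_{i\ge j}(\mec w_i\cdot\mec x)^2$, it is enough to check that the quadratic form $\sum_{i\ge j}(\mec w_i\cdot\mec x)^2$ is bounded below on some unit vector of $S$. This form has kernel $(\mec w_j,\ldots,\mec w_m)^\perp$, which has dimension $j-1$. Since $\dim S=m-j+1$, the subspace $S$ meets the kernel's orthogonal complement, namely $V'={\rm Span}\{\mec w_j,\ldots,\mec w_m\}$, in a way that yields the required vector.

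To make this precise, take $\mec x\in S\cap \ker^{\perp}$ if that intersection is nonzero. In general it may be zero, so instead I choose $\mec x\in S$ orthogonal to a complementary $(m-j)$-dimensional piece. This is the delicate compactness point. The form is positive definite on $V'$ with minimum eigenvalue $a>0$, and the worry is that $S$ might be nearly inside the kernel. That cannot happen, because $\dim S+\dim\ker=m$, so $S$ cannot lie inside the kernel.

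I would handle this uniformly by a direct count instead. The form $\mec x^{\rm T}W\mec x$ dominates $d_j Q_j(\mec x)$, where $Q_j=\sum_{i\ge j}\mec w_i\mec w_i^{\rm T}$. Eigenvalue monotonicity then gives $\nu_j(W)\ge d_j\,\nu_j(Q_j)$. The matrix $Q_j$ has rank $m-j+1$, so its $j$-th smallest eigenvalue is its smallest positive eigenvalue, which is a constant $a_j>0$.

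\textbf{Conclusion.} Steps 2 and 3 together give $a d_j\le \nu_j\le Kd_j$. Take $C=K/a$. Then $d_{j+1}>Cd_j$ implies $\nu_{j+1}\ge ad_{j+1}>Kd_j\ge\nu_j$, so the eigenvalues of $W$ are distinct. The key steps to verify carefully are the dimension count for $U_j$ and the Weyl monotonicity step applied to the PSD inequality $W\succeq d_jQ_j$.
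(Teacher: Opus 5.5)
Your final argument is correct and is essentially the paper's proof. Both sandwich the $j$-th smallest eigenvalue of $W$ between constant multiples of $d_j$ via Courant--Fischer. Your test space $U_j=(\mec w_{j+1},\ldots,\mec w_m)^\perp$ is exactly the span of the first $j$ dual-basis vectors $\mec y_1,\ldots,\mec y_j$ that the paper uses. The only difference is that you get the lower bound from $W\succeq d_jQ_j$ and Weyl monotonicity, rather than from the paper's second dual-basis test space.

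When writing it up:
\begin{itemize}
\item Delete the abandoned middle of Step~3: the condition stated there for $(m-j+1)$-dimensional subspaces is misstated, and the claim that $S$ cannot lie in the kernel is false in general.
\item Delete the announced but unused induction on $m$.
\item State explicitly that $C=K/a\ge 1$ (since $a\le |\mec w_m|^2\le K$). This is what gives $d_1<d_2<\cdots<d_m$, which both of your bounds use silently.
\end{itemize}
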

\begin{proof}
We will restrict ourselves to choosing $d_1\le d_2\le\cdots\le d_m$.
Let $\mec y_1,\ldots,\mec y_m$ be the dual basis, i.e.,
$\mec y_i\cdot\mec w_j=\delta_{ij}$ (the Dirac delta).
If $\lambda_k$ is the $k$-th smallest eigenvalue of $W=W(d_1,\ldots,d_m)$,
then the min-max principle implies that
for any $m-k+1$ dimensional subspace $V\in\reals^m$, we have
$$
\lambda_k \le \max_{\mec v\in V} \cR_W(\mec v),
$$
where $\cR_W=\cR_{W(d_1,\ldots,d_m)}$ is the Rayleigh quotient
$$
\cR_W(\mec v) = \frac{\mec v\cdot (W\mec v)}{\mec v\cdot\mec v}.
$$

The point of choosing the dual basis is that for any $b_1,\ldots,b_m$
we easily check that
$$
\left( \sum_{i=1}^m b_i \mec y_i \right)\cdot 
\left( \sum_{i=1}^m b_i W(d_1,\ldots,d_m) \mec y_i\right)
= \sum_{i=1}^m b_i^2 d_i,
$$
which is the numerator in the Rayleigh quotient applied to
$\mec v=\sum_{i=1}^m b_i \mec y_i$.

Now choose $W$ to be the span of $\mec y_1,\ldots,\mec y_{m-k+1}$.
Let $\mec v=b_1\mec y_1+\cdots + b_{m-k+1}\mec y_{m-k+1}$
be a unit vector at which $\cR_W(\mec v)$ attains its maximum.
Then, using the fact that the $\{\mec y_i\}_{i\in[m]}$ are a dual basis,
we have
$$
\cR_W(\mec v)= b_1^2 d_1 + \cdots + b_{m-k+1}^2 d_{m-k+1}
\le d_{m-k+1} (b_1^2 + \cdots + b_{m-k+1}^2 )
\le d_{m-k+1} C_1,
$$
where 
$$
C_1 = \max \bigl\{ b_1^2 + \cdots + b_m^2 \ \bigm| 
\mbox{$\sum_{i=1}^m b_i\mec y_i$ is a unit vector} \bigr\},
$$
and $C_1$ is finite by the equivalence of norms.

Similarly, let
$$
C_2 = \min \bigl\{ b_1^2 + \cdots + b_m^2 \ \bigm|
\mbox{$\sum_{i=1}^m b_i\mec y_i$ is a unit vector} \bigr\},
$$
hence $0<C_2\le C_1$.  By the max-min principle, we have that
for any $W\subset\reals^n$ of dimension $k$ we have
$$
\lambda_k \ge \min_{v\in V} \cR_W(\mec v).
$$
So let $V$ be the span of $\mec y_{m-k+1},\ldots,\mec y_m$.
Then for any unit vector $\mec v\in V$ we have
$\mec v=b_1\mec y_m+\cdots+b_k\mec y_{m-k+1}$
and
$$
\mec v\cdot (W(d_1,\ldots,d_m)\mec v) =
b_1^2 d_m + \cdots + b_k^2 d_{m-k+1} \ge
(b_1^2+\cdots + b_k^2)d_{m-k+1}\ge C_2 d_{m-k+1}.
$$
Hence
$$
\lambda_k \ge d_{m-k+1} C_2 ,
$$
and so
$$
C_2 d_{m-k+1} \le \lambda_k \le C_1 d_{m-k+1}
$$
provided that $d_1\le d_2\le\cdots\le d_m$.  So we have that
if $C_1 d_{m-k} < C_2 d_{m-k+1}$, then
$\lambda_{k+1}<\lambda_k$.
Hence $\lambda_1<\lambda_2<\cdots<\lambda_m$ provided that
$d_i < (C_2/C_1) d_{i+1}$ for all $i\in[m-1]$
and $d_1>0$ (we also assumed that $d_1\le d_2\le\cdots\le d_m$, but
this is guaranteed by $$d_i < (C_2/C_1) d_{i+1}$$ since $C_2\le C_1$).
This proves the lemma with $C=C_1/C_2$.
\end{proof}

\begin{proof}[Proof of Lemma~\ref{le_perturb_subspace_E_at_a_diagonal_matrix}]
Apply Lemma~\ref{le_reduce}, with
$\mec v_1,\ldots,\mec v_m$ and
$Q$ as in the lemma; it suffices to analyze the eigenvalues
of $Q^T M_1 Q$ for $M_1={\rm diag}(d_1,\ldots,d_n)$.
{\myred
The matrix whose rows are $\mec v_1,\ldots,\mec v_m$ has an
$m\times m$ submatrix of full rank, and by 
rearranging the components of $\mec v_1,\ldots,\mec v_m$, we can
assume that this submatrix is the one corresponding to the first $m$ components 
of $\mec v_1,\ldots,\mec v_m$.
So for $i\in[m]$, 
}
let $\mec w_i$ be the first $m$ components of $\mec v_i$;
by assumption, $\mec w_1,\ldots,\mec w_m$ are linearly independent.
Now choosing $M_1={\rm diag}(d_1,\ldots,d_m,0,\ldots,0)$, we have
$$
W(d_1,\ldots,d_m)
\eqdef  Q^{\rm T}M_1 Q = \sum_{i=1}^m d_i \mec w_i \mec w_i^{\rm T}.
$$
Then by applying
Lemma~\ref{le_distinct_eigenvalues}
we see there exist $d_1,\ldots,d_m$ such that 
$W(d_1,\ldots,d_m)$ have distinct eigenvalues.
\end{proof}

\section{Perturbation of Generalized Laplacians at a Simple Eigenvalue}
\label{se_gen_Laplacian_perturb}

In this section we describe perturbations of generalized Laplacians.
We focus on a theorem regarding a simple eigenvalue of a generalized
Laplacian.

\begin{definition}
Let $G$ be a simple graph.  We
say that $\mec f,\tilde{\mec f}\in\reals^{V_G}$ are {\em sign equivalent}
if they have the same sign --- both positive, or both negative, or both
zero --- at each vertex.
\end{definition}
Note that if $\mec f,\tilde{\mec f}$ are sign equivalent, then
${\rm WND}(\mec f)={\rm WND}(\tilde{\mec f})$, and similarly for 
SND, UN, $\MUN{}$, and ${\rm UN}_i$.

\begin{definition}
Let $G$ be a simple graph, and $M_0$ a generalized Laplacian on
$G$.  
We say that $M_1\in \reals^{V_G\times V_G}$ is {\em supported on $G$}
if $M_1(u,v)=0$ whenever $u\ne v$ and $\{u,v\}\notin E_G$.
By a {\em perturbation of $M_0$} we mean a 
family of matrices in $\reals^{V_G\times V_G}$,
$$
M(\epsilon)=M_0+\epsilon M_1
$$
for $\epsilon\in\reals$
such that $M(\epsilon)$ is a generalized Laplacian for $|\epsilon|$
sufficiently small, or, equivalently, $M_1$ is supported on $G$.
At times we refer also to $M_1$ as a {\em perturbation of $M_0$} if confusion
is unlikely to arise.
\end{definition}

\begin{lemma}\label{le_perturbation_at_simple_eigenvalue_main}
Let $G$ be a finite, connected, simple graph, and $M_0$
any generalized Laplacian
on $G$.
Say its $k$-th smallest eigenvalue, $\lambda_k=\lambda_k(M)$ 
occurs with multiplicity one, and $\mec f_k$ is a corresponding eigenvector.
Say that $\mec f_k$ has at least one Urschel vertex.
Then there exists a generalized Laplacian perturbation of $M_0$,
$M(\epsilon)=M_0+\epsilon M_1$, such that $\mec f_k^1$ given by
\begin{equation}\label{eq_f_sub_k_sup_one_with_pseudoinverse}
\mec f_k^1 
=
(M_0-\lambda_0 I)^+ (- M_1 \mec f_k),
\end{equation} 
(compare with \eqref{eq_f_one_as_pseudoinverse_with_M_one_f_zero})
has $f_1(v)\ne 0$ for at least one Urschel vertex of $v$.
Moreover, if $\mec f_k$ 
has no deep Urschel vertices, then
we can choose $M_1$ so that
$\mec f_k^1$ 
in \eqref{eq_f_sub_k_sup_one_with_pseudoinverse}
can be taken to have any specified values on the Urschel vertices
of $\mec f_k$
(and to vanish on all non-Urschel vertices).
{\myred Finally, if $\mec f_k$ has $s$ shallow Urschel vertices, then
there is a set $V'\subset{\rm Urschel}(\mec f_k)$ with $|V'|\ge s$
such that as $M_1$ varies over all symmetric matrices supported on $G$,
$\mec f_k^1$ 
in \eqref{eq_f_sub_k_sup_one_with_pseudoinverse}
can be taken to have any values on $V'$.}
\end{lemma}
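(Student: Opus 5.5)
Throughout write $A=M_0-\lambda_k I$, $\mec f=\mec f_k$, $U={\rm Urschel}(\mec f)$, $N={\rm NonUrschel}(\mec f)$, and $S={\rm Shallow}(\mec f)$ with $|S|=s$. Since $\lambda_k$ is simple, $\ker A={\rm Span}(\mec f)$. Hence $A^+$ kills $\mec f$ and restricts to a linear bijection of $\mec f^\perp$ onto itself. The plan is to describe exactly which vectors $\mec g=-M_1\mec f$ can be produced as $M_1$ ranges over symmetric matrices supported on $G$, then push that set through $A^+$, and finally count dimensions.

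\emph{Step 1: the reachable set of $M_1\mec f$.} For $u\in V_G$ we have $(M_1\mec f)(u)=M_1(u,u)\mec f(u)+\sum_{v\sim u}M_1(u,v)\mec f(v)$. For $u\in U$ the diagonal term vanishes, and only neighbours $v\in N$ contribute.
\begin{enumerate}
\item If $u\in{\rm Deep}(\mec f)$, then $(M_1\mec f)(u)=0$ for every admissible $M_1$.
\item If $u\in S$, pick one edge $\{u,v_u\}$ with $v_u\in N$. Setting $M_1(u,v_u)=M_1(v_u,u)$ to a suitable value gives $(M_1\mec f)(u)$ any prescribed value. The symmetric entry contributes $M_1(v_u,u)\mec f(u)=0$ to the $v_u$ coordinate, so different shallow vertices and the $N$-coordinates do not interfere.
\item For $v\in N$, the diagonal entry $M_1(v,v)$ multiplies $\mec f(v)\neq0$, so after fixing the edge entries we can adjust $(M_1\mec f)(v)$ to any value.
\end{enumerate}
Thus the set of reachable $\mec g=-M_1\mec f$ is exactly $R=\realBVG{(N\cup S)}$. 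Since $\mec f$ is supported on $N$, we have $\mec f\in R$. Therefore the orthogonal projection of $R$ onto $\mec f^\perp$ is $R\cap\mec f^\perp$, which has dimension $|N|+s-1$.

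\emph{Step 2: the image under $A^+$.} Because $A^+\mec g$ depends only on the $\mec f^\perp$-component of $\mec g$, the set of attainable $\mec f_k^1$ is $T=A^+(R\cap\mec f^\perp)$, a subspace of $\mec f^\perp$ of dimension $|N|+s-1$. Consider the restriction map $\rho\from T\to\reals^U$. Its kernel lies in $\{\mec h\in\mec f^\perp:\mec h|_U=0\}=\realBVG{N}\cap\mec f^\perp$, which has dimension $|N|-1$. Hence ${\rm rank}\,\rho\ge s$. Choose $V'\subset U$ with $|V'|=s$ on which the coordinates of $\rho(T)$ are independent, i.e.\ $\rho(T)$ projects onto $\reals^{V'}$. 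This gives the final assertion of the lemma.

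\emph{Step 3: the first two assertions.}
\begin{enumerate}
\item Since $G$ is connected, $U\neq\emptyset$, and $N\neq\emptyset$ (as $\mec f\neq0$), some Urschel vertex is adjacent to $N$. So $s\ge1$, and Step 2 yields an $M_1$ with $\mec f_k^1(v)\neq0$ for some $v\in U$.
\item If there are no deep vertices, then $R=\reals^{V_G}$ and $T=\mec f^\perp$. Any vector supported on $U$ is orthogonal to $\mec f$, so it lies in $T$. Hence $\mec f_k^1$ can be any prescribed function on $U$ that vanishes on $N$.
\end{enumerate}

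I expect the only delicate point to be Step 1: checking that the off-diagonal choices at shallow vertices genuinely decouple from the $N$-coordinates and from one another, which rests on $\mec f(u)=0$ for $u\in U$. The rest is linear-algebraic dimension counting.
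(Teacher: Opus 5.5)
Your proposal is correct and follows essentially the same route as the paper: you identify $\{M_1\mec f_k\}$ as $\realBVG{{\rm NonUrschel}(\mec f_k)\cup{\rm Shallow}(\mec f_k)}$, use that $(M_0-\lambda_k I)^+$ is a bijection of $\mec f_k^\perp$, and count dimensions against $\realBVG{{\rm NonUrschel}(\mec f_k)}\cap\mec f_k^\perp$ (of dimension $|{\rm NonUrschel}(\mec f_k)|-1$) to get at least $s$ freely prescribable Urschel coordinates. The only cosmetic difference is that you get the first assertion as the case $s\ge 1$ of your rank bound on the restriction map, whereas the paper gives it a separate non-containment argument that is the same in substance.
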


{\myred 
We depict the variables and notation used in the proof below in
Table~\ref{ta_table_of_variables_in_the_proof}.
\begin{table}[h]
\myred\centering
\begin{tabular}{|c|c|c|c|}
\hline
& ${\rm Deep}(\mec f_k)$ & ${\rm Shallow}(\mec f_k)$ 
& ${\rm NonUrschel}(\mec f_k)$ 
\\
\hline
$\mec f_k$ & zero & zero & non-zero \\
\hline
$\mec f_k^\perp$ & any & any & $U_1$ (codim $1$)\\
\hline
$W=\{M_1 \mec f_k\} $ & zero & any & any \\
\hline
$W_1=\mec f_k^\perp\cap W $ & zero & any & $U_1$ \\
\hline
\end{tabular}
\vskip 0.2 truein
\caption{The variables in the proof: $\mec f_k$ is an eigenvector,
which vanishes on
${\rm Urschel}(\mec f_k)={\rm Shallow}(\mec f_k)\cup{\rm Deep}(\mec f_k)$.
Hence $\mec f_k^\perp$ takes any values on the ${\rm Urschel}(\mec f_k)$
and is a codim 1 subspace of the functions supported 
on ${\rm NonUrschel}(\mec f_k)$.
As $M_1$ varies over perturbations of $M_0$, $M_1\mec f_k$ takes any values
on the shallow Urschel and non-Urschel vertices.
We know that the operator $(M_0-\lambda_k I)^+$ is a bijection on 
$\mec f_k^\perp$
(but we have little other information about $(M_0-\lambda_k I)^+$).
Hence $(M_0-\lambda_k I)^+W$ has dimension one less than the number
of shallow Urschel and non-Urschel vertices of $\mec f_k$.
}
\label{ta_table_of_variables_in_the_proof}
\end{table}
}
\begin{proof}
{\myred
Since $G$ is connected,
$$
s \eqdef \bigl| {\rm Shallow}(\mec f_k) \bigr| \ge 1.
$$
Recall the notation \eqref{eq_reals_B_subset_A}, so that
}
for $A\subset V_G$, we set
$$
\reals^{A\subset V_G}
= {\rm Span}\{ \mec e_a \ | \ a\in A \},
$$
where $\mec e_a$ is the standard basis vector of $a$ in $\reals^{V_G}$.

Consider
\begin{equation}\label{eq_defines_W_as_M_one_f_k}
W=\{ M_1 \mec f_k \ | \ \mbox{$M_1$ is supported on $G$} \} 
\subset \reals^{V_G};
\end{equation} 
let us show that
\begin{equation}\label{eq_W_contains_basis_vecs_of_NonUrsh_and_shallow}
v\in{\rm NonUrschel}(\mec f_k)\cup{\rm Shallow}(\mec f_k)
\quad\implies\quad
\mec e_v\in W.
\end{equation} 
To see this, first note 
for all $v\in{\rm NonUrschel}(\mec f_k)$, $\mec e_v\in W$, since we may take $M_1$
to be a diagonal matrix with a single nonzero diagonal entry at
$(v,v)$.
Next, for $v\in{\rm Shallow}(\mec f_k)$, there is a $v_1\in{\rm NonUrschel}(\mec f_k)$
that is adjacent to $v$.
To see that $\mec e_{v}\in W$, 
we can take $M_1$ to be everywhere zero except for
$M_1(v,v_1)=M_1(v_1,v)=1$ so that $M_1 \mec f_k$ is nonzero at and only
at $v$.
This establishes \eqref{eq_W_contains_basis_vecs_of_NonUrsh_and_shallow}.

Since $W$ is clearly a subspace of $\reals^{V_G}$,
\eqref{eq_W_contains_basis_vecs_of_NonUrsh_and_shallow} implies
\begin{equation}\label{eq_no_deep_Urschel_points_get_everything}
\realBVG{{\rm NonUrschel}(\mec f_k)}+
\realBVG{{\rm Shallow}(\mec f_k)}
\subset W 
\end{equation} 
(recall the notation \eqref{eq_reals_B_subset_A}).
[We also see that the above is an equality, since $M_1$ is supported on
the diagonal and the edges of $G$;
hence for any deep Urschel vertex, $v$, we have that $v$ is not
adjacent to any non-Urschel vertex, nor is it one itself, and hence
$(M_1f)(v)=0$ for all $M_1$ supported on $G$.
{\myred Hence \eqref{eq_no_deep_Urschel_points_get_everything} 
holds with equality.}]

{\myred
Since $\mec f_k$ vanishes on all Urschel vertices of $\mec f_k$,
we have
\begin{equation}\label{eq_f_k_perp_equals_U_one_and_reals_to_the_Urschel}
\mec f_k^\perp = U_1 + \realBVG{{\rm Urschel}(\mec f_k)} ,
\end{equation} 
where 
$$
U_1 = \mec f_k^\perp \cap \realBVG{{\rm NonUrschel}(\mec f_k)}
\subset \realBVG{{\rm NonUrschel}(\mec f_k)}.
$$
Since $\mec f_k$ spans a one-dimension subspace,
$$
\dim(U_1) = \bigr|{\rm NonUrschel}(\mec f_k) \bigl| - 1.
$$

Next set
$$
W_1 = \mec f_k^\perp \cap W.
$$
Since \eqref{eq_W_contains_basis_vecs_of_NonUrsh_and_shallow} holds
with equality,
$$
W_1 = U_1 + \realBVG{{\rm Shallow}(\mec f_k)},
$$
so
\begin{equation}\label{eq_W_one_formula_nonUrsch_minus_one_plus_s}
\dim(W_1) = \dim(U_1) + s =
\bigr|{\rm NonUrschel}(\mec f_k) \bigl| - 1+s.
\end{equation} 
}

Let us prove that 
\begin{equation}\label{eq_pseudoinverse_W_not_in_U}
(M_0-\lambda_k I)^+ W \not\subset 
\realBVG{{\rm NonUrschel}(\mec f_k)}
\end{equation} 
To prove this, first note that
$(M_0-\lambda_k I)^+$ is 
{\myred a bijection from $\mec f_k^\perp$ to itself.
Since $W_1\subset \mec f_k^\perp$,
$$
\dim\bigl( (M_0-\lambda_k I)^+ W_1 ) = \dim(W_1) = 
|{\rm NonUrschel}(\mec f_k)| -1 +s.
$$
Also
$$
{\rm Image}\bigl( (M_0-\lambda_k I)^+ \bigr) \cap 
\realBVG{{\rm NonUrschel}(\mec f_k)} = U_1
$$
is of dimension $|{\rm NonUrschel}(\mec f_k)| -1$.
Since $s\ge 1$ we therefore have
$$
(M_0-\lambda_k I)^+ W_1 \not\subset U_1,
$$
So 
$$
(M_0-\lambda_k I)^+ W \subset 
\realBVG{{\rm NonUrschel}(\mec f_k)}
$$
implies
$$
(M_0-\lambda_k I)^+ W_1 \subset U_1,
$$
which is impossible.  Hence 
\eqref{eq_pseudoinverse_W_not_in_U} holds.
}

In view of \eqref{eq_pseudoinverse_W_not_in_U}
and the definition \eqref{eq_defines_W_as_M_one_f_k} of $W$,
there exists $M_1$ supported on $G$ satisfying
$$
(M_0-\lambda_k I)^+ M_1 \mec f_k \notin \realBVG{{\rm NonUrschel}(\mec f_k)}.
$$
Then $(M_0-\lambda_k I)^+ M_1 \mec f_k$ is nonzero at at least one Urschel
vertex.
This proves the first part of
Lemma~\ref{le_perturbation_at_simple_eigenvalue_main}.


For the second part of the theorem, if there are no deep Urschel points then
{\myred
\eqref{eq_W_one_formula_nonUrsch_minus_one_plus_s} implies
$$
\dim(W_1)=|{\rm NonUrschel}(\mec f_k)|-1+s = |V_G|-1.
$$
Since this is the dimension of $\mec f_k^\perp$, we have
$W_1=\mec f_k^\perp$.
Hence
$$
(M_0-\lambda_k I)^+ W_1 = W_1 = \mec f_k^\perp,
$$
so $\mec f_k^1$ can be taken to have any value on $\mec f_k^\perp$,
and in particular any values on the Urschel vertices of $\mec f_k^\perp$.
}
{\myred This proves the second assertion.

Let $\pi\from \reals^{V_G}\to\reals^{{\rm Urschel}(\mec f_k)\subset V_G}$
be the orthogonal
projection (that zeros all ${\rm NonUrschel}(\mec f_k)$ components); consider
$$
P = \pi \bigl( (M_0-\lambda_k I)^+ W_1\bigr)
\subset \reals^{{\rm Urschel}(\mec f_k)\subset V_G} \subset \reals^{V_G}.
$$
Since the 
$$
{\rm Image}\bigl( (M_0-\lambda_k I)^+ \bigr)
= U_1 + \reals^{{\rm Urschel}(\mec f_k)\subset V_G},
$$
we have
$$
(M_0-\lambda_k I)^+ W_1 \subset U_1 + P.
$$
Hence 
\begin{equation}\label{eq_dim_P_lower_bound}
\dim(P) \ge \dim\bigl( (M_0-\lambda_k I)^+ W_1 \bigr) - \dim(U_1)
= \dim(W_1)-\dim(U_1)
\end{equation} 
since $(M_0-\lambda_k I)^+$ is a bijection on $\mec f_k^\perp$
and $W_1\subset \mec f_k^\perp$.
Putting this into 
\eqref{eq_W_one_formula_nonUrsch_minus_one_plus_s} gives
$$
\dim(P) \ge \bigl( \dim(U_1)+s \bigr) - \dim(U_1)=s.
$$
The elements of $P$ can be described by having
$\dim(P)$ free coordinates in ${\rm Urschel}(\mec f_k)$,
and the rest of the coordinates there fixed as functions of the
free coordinates (after reducing the rowspace of $P$ to echelon form).
Since $\dim(P)\ge s$, we have at least $s$ free variables which can 
be assigned arbitrary values and determine a vector $\mec p\in P$.
Then
$$
\pi \bigl( (M_0-\lambda_k I)^+ W_1\bigr) = P
$$
implies there is some vector in $(M_0-\lambda_k I)^+ W_1$ whose values
on ${\rm Urschel}(\mec f_k)$ are those of $\mec p$.
}
\end{proof}

We use the lemma above to prove
Theorems~\ref{th_max_urschel_number_simple_eig_all_shallow}--\ref{th_Urschel_most_general}.
All these theorems involve the case of a simple eigenvalue
(multiplicity 1) of a generalized Laplacian.

Recall Theorem~\ref{th_max_urschel_number_simple_eig_all_shallow}
deals with the case where there are no Urschel vertices.

\begin{proof}[Proof of 
Theorem~\ref{th_max_urschel_number_simple_eig_all_shallow}]
Let $\mec f_k^1\in\reals^{V_G}$ be any function such that $\mec f_k^1(v)=0$ for
all $v\in{\rm NonUrschel}(\mec f_k)$.
Then 
Lemma~\ref{le_perturbation_at_simple_eigenvalue_main}
shows that for some $M_1$ we have
$M(\epsilon)=M_0+\epsilon M_1$ has its $k$-th eigenvector equal to
$\mec f_k(\epsilon)=\mec f_k+\epsilon \mec f_k^1 + O(\epsilon^2)$ for 
$|\epsilon|$
sufficiently small.  Taking $\epsilon>0$ and sufficiently small
we have that $\mec f_k(\epsilon)$ has the sign of $\mec f_k$ at its
non-Urschel vertices, and the sign of $\mec f_k^1$ at its Urschel vertices.
Hence for $\epsilon>0$ sufficiently small, $\mec f_k(\epsilon)$ is a signing
of $\mec f_k$ with arbitrarily prescribed sign pattern.
But for $\epsilon>0$ sufficiently small, $M(\epsilon)=M_0+\epsilon M_1$
is a generalized Laplacian whose $k$-th eigenvalue is $\lambda_k(\epsilon)$;
hence
Theorem~\ref{th_friedmans_nodal_region_theorem} implies that
${\rm SND}(\mec f_k(\epsilon))\le k$.
Hence $\MUN(\mec f_k)\le k$.
\end{proof}

Recall that Theorem~\ref{th_Urschel_reproved_with_two_signings}
deals with the case where there are Urschel vertices.

Note that if $v$ is an Urschel vertex of $\mec f_k$ at which 
$\mec f_k^1$ is nonzero,
then for small $\epsilon>0$,
$f_k(\epsilon)(v)$ and $f_k(-\epsilon)(v)$ have the opposite sign.

\begin{proof}[Proof of Theorem~\ref{th_Urschel_reproved_with_two_signings}]
{\myred
By Lemma~\ref{le_perturbation_at_simple_eigenvalue_main} we may choose
$M_1$ that is supported on $G$ such that
$\mec f_k^1$ given by
\eqref{eq_f_sub_k_sup_one_with_pseudoinverse} is nonzero at one or more
Urschel vertices of $\mec f_k$.
Fix such an $M_1$, let $\mec f_k^1$ be as 
in \eqref{eq_f_sub_k_sup_one_with_pseudoinverse}.
It follows that for $\epsilon>0$ sufficiently small we have:
\begin{enumerate}
\item
$M(\epsilon)=M_0+\epsilon_1 M_1$ is a generalized Laplacian;
\item
$\mec f_k(\epsilon)=\mec f_k+\epsilon\mec f_k^1+O(\epsilon^2)$ is the
$k$-th eigenvalue of $M(\epsilon)$; 
\item
$\mec f_k(\epsilon)$
has the same sign as $\mec f_k$ wherever $\mec f_k$ is nonzero; and
\item
$\mec f_k(\epsilon)$
has the same sign as $\mec f_k^1$ wherever $\mec f_k$ is zero and
$\mec f_k^1$ is nonzero.
\end{enumerate}
Fix such an $\epsilon>0$, call it $\epsilon_1$, and let
$$
M_+^1=M(\epsilon_1), \quad \mec g_{k,1,+}=\mec f_k(\epsilon_1).
$$
Then $\mec g_{k,1,+}$ is a partial signing of $\mec f_k$.

If $g_{k,1,+}$ has no Urschel vertices, then $g_{k,1,+}$ is a signing
of $\mec f_k$, and by Theorem~\ref{th_friedmans_nodal_region_theorem},
${\rm SND}(g_{k,+}^1)\le k$.
Hence
\begin{equation}\label{eq_UN_one_f_k_at_most_k}
{\rm UN}_1(\mec f_k)\le k.
\end{equation} 
If, by contrast, $g_{k,1,+}$ has Urschel vertices, then we will
still prove \eqref{eq_UN_one_f_k_at_most_k} as follows: repeat the argument 
in the
last paragraph to find
$$
M_+^2=M(\epsilon_2), \quad \mec g_{k,2,+}=\mec f_k(\epsilon_2)
$$
so that (1) $\mec g_{k,2,+}$ is a partial signing of $\mec g_{k,1,+}$ with
at least one less Urschel vertex, 
and (2) $\mec g_{k,2,+}$ is the $k$-th eigenfunction of the generalized
Laplacian $M_+^2$.
We continue this procedure to find $M_+^i,\mec g_{k,i,+}$
with $i=3,4,\ldots$ until we reach an $i=\ell$ at which point
$\mec g_{k,\ell,+}$ has no Urschel vertices.
Then ${\rm SND}(\mec g_{k,\ell,+})\le k$, and~\eqref{eq_UN_one_f_k_at_most_k}
holds because for all $1\le i\le \ell-1$,
$\mec g_{k,i+1,+}$ is a partial signing of $\mec g_{k,i,+}$.

Hence \eqref{eq_UN_one_f_k_at_most_k} holds.
Now we repeat the same argument in the first paragraph, but we choose
$\epsilon_1<0$ and sufficiently small,
so that conditions~(1)--(2) in the first paragraph hold except that
$\mec f_k(\epsilon_1)$ has the {\em opposite} sign as $\mec f_k^1$
wherever it is nonzero and $\mec f_k$ is zero.
We set 
$$
M_-^1=M(\epsilon_1), \quad \mec g_{k,1,-}=\mec f_k(\epsilon_1),
$$
and note that $g_{k,1,-}$ is a different partial signing of $\mec f_k$ 
than is $g_{k,1,+}$, since $g_{k,1,-}$ and $g_{k,1,+}$ have opposite
sign wherever $\mec f_k^1$ is nonzero and $\mec f_k$ is zero.
Continuing to find $M_-^i,\mec g_{k,i,-}$ for $i=2,3,\ldots$,
we eventually reach $\mec g_{k,\ell',-}$ without any Urschel vertices.
Since $\mec g_{k,\ell',-}$ is a different signing from $g_{k,\ell,+}$,
we have
$$
{\rm UN}_2(\mec f_k)\le k.
$$
}
\end{proof}

{\myred 
Recall that Theorem~\ref{th_Urschel_most_general}
generalizes the main result of the two former theorems.

\begin{proof}[Proof of Theorem~\ref{th_Urschel_most_general}]
Same argument as in the proof of 
Theorem~\ref{th_Urschel_reproved_with_two_signings}, noticing that
we may choose $\mec f_k^1$ to have any signing on at least $s$
Urschel vertices of $\mec f_k$.
Choosing all possible $2^s$ sign patterns on these vertices gives
us at leaset $2^s$ different partial signings of $\mec f_k$,
each of which eventually gives a signing of $\mec f_k$ with
at most $k$ strong nodal domains.  Hence
$$
{\rm UN}_{2^s}(\mec f_k) \le k.
$$
\end{proof}
}
\section{Perturbation of Generalized Laplacians at a Multiple Eigenvalue}
\label{se_gen_Laplacian_perturb_multiple}

In this section we describe perturbations of generalized Laplacians.
We focus on a theorem regarding a multiple eigenvalue of a generalized
Laplacian.

\begin{lemma}\label{le_perturbation_at_multiple_eigenvalue_main}
Let $G$ be a finite, simple graph, and $M_0$ any generalized Laplacian
on $G$ with eigenvalues 
\eqref{eq_eigenvalues_increasing_order}.
Say that $\lambda_k$ has multiplicity $m$
(i.e., \eqref{eq_kth_eigenvalue_has_multiplicity_m_intro} holds).
Then there exists a perturbation of $M_0$,
$$
M(\epsilon)=M_0 + \epsilon M_1 
$$
and $\lambda_k^{1,1}<\lambda_k^{2,1}<\cdots<\lambda_k^{m,1}$
such that for $|\epsilon|$ nonzero and sufficiently small,
$M(\epsilon)$ has $m$ distinct eigenvalues
$$
\lambda_k^j(\epsilon)=\lambda_k + \epsilon\lambda_k^{j,1} + O(\epsilon^2),
\quad j\in[m]
$$
which are the $k$-th through $(k+m-1)$-th eigenvalues
of $M(\epsilon)$, and each $\lambda_k^j(\epsilon)$
has an eigenfunction
$$
\mec f_k^j(\epsilon)=\mec f_k^{j,0}+ \epsilon \mec f_k^{j,1}+O(\epsilon^2).
$$
(We write $\mec f_k^j$ and $\lambda_k^j$ with superscripts because the 
order of the $\lambda_k^j(\epsilon)$ depends on whether $\epsilon>0$ or 
$\epsilon<0$.)
Moreover, $\mec f_k^{1,0},\ldots,\mec f_k^{m,0}$ are mutually orthogonal.
\end{lemma}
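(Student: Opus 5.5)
The plan is to take $M_1=D$ to be a diagonal matrix supplied by Lemma~\ref{le_perturb_subspace_E_at_a_diagonal_matrix}, applied to the eigenspace $E=E(M_0,\lambda_k)$, which has dimension $m$. This choice has two uses.

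First, any diagonal matrix is supported on $G$. So $M(\epsilon)=M_0+\epsilon D$ has the same off-diagonal entries as $M_0$ for every $\epsilon$, and it is a generalized Laplacian for all real $\epsilon$, not only for small ones. Hence it is a perturbation of $M_0$ in the sense defined above.

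Second, with $P$ the orthogonal projection onto $E$, the lemma guarantees that $PDP$ restricted to $E$ has $m$ distinct eigenvalues. We label them $\lambda_k^{1,1}<\cdots<\lambda_k^{m,1}$. This is exactly the hypothesis of Theorem~\ref{th_perturbation_theory_at_a_multiple_eigenvalue}, applied with $\lambda_0=\lambda_k$.

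That theorem then gives, for small nonzero $|\epsilon|$, distinct eigenvalues $\lambda_k^j(\epsilon)=\lambda_k+\epsilon\lambda_k^{j,1}+O(\epsilon^2)$. It also gives eigenvectors $\mec f_k^j(\epsilon)=\mec f_k^{j,0}+\epsilon\mec f_k^{j,1}+O(\epsilon^2)$ whose leading terms $\mec f_k^{1,0},\ldots,\mec f_k^{m,0}$ are mutually orthogonal. These leading terms are the eigenvectors of $PDP$ in $E$, coming from the holomorphic eigenfunctions of Subsection~\ref{su_general_perturbation_theory}.

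It remains to show that these $m$ eigenvalues are precisely the $k$-th through $(k+m-1)$-th eigenvalues of $M(\epsilon)$. For this I will use continuity of eigenvalues: each eigenvalue of $M(\epsilon)$ is within $|\epsilon|\,\|D\|$ of the corresponding eigenvalue of $M_0$, by Weyl's inequality. Let $\delta>0$ be less than $\lambda_k-\lambda_{k-1}$ and less than $\lambda_{k+m}-\lambda_k$. Then for $|\epsilon|\,\|D\|<\delta/2$, exactly $m$ eigenvalues of $M(\epsilon)$ (counted with multiplicity) lie in the interval $(\lambda_k-\delta/2,\lambda_k+\delta/2)$. These are $\lambda_k(M(\epsilon)),\ldots,\lambda_{k+m-1}(M(\epsilon))$.

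The $m$ distinct values $\lambda_k^j(\epsilon)$ all lie in this interval once $|\epsilon|$ is small, so they must coincide with those $m$ eigenvalues. Their order is the order of the $\lambda_k^{j,1}$ when $\epsilon>0$, and the reversed order when $\epsilon<0$; this is the point of the remark about superscripts in the statement. The one remaining requirement, that each $\lambda_k^j(\epsilon)$ is a simple eigenvalue of $M(\epsilon)$, also follows from this count: there are exactly $m$ eigenvalues in the window and $m$ distinct values filling it.

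The main obstacle is making sure the hypotheses of the cited results match here, in particular that Theorem~\ref{th_perturbation_theory_at_a_multiple_eigenvalue} applies with a diagonal $M_1$. The separation-of-eigenvalues argument itself is routine. The substantive input is Lemma~\ref{le_perturb_subspace_E_at_a_diagonal_matrix}, which we are allowed to assume.
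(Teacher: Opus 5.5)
Your proof is correct and follows the paper's own argument: take a diagonal $M_1$ from Lemma~\ref{le_perturb_subspace_E_at_a_diagonal_matrix}, which is automatically supported on $G$, and invoke Theorem~\ref{th_perturbation_theory_at_a_multiple_eigenvalue}. Your Weyl-inequality window argument usefully makes explicit a step the paper leaves implicit: it shows that the $m$ split eigenvalues are simple and are exactly the $k$-th through $(k+m-1)$-th eigenvalues of $M(\epsilon)$, with their order reversing when $\epsilon<0$.
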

\begin{proof}[Proof of 
Lemma~\ref{le_perturbation_at_multiple_eigenvalue_main}]
By definition, any diagonal matrix $M_1$ yields a 
perturbation of $M$, $M(\epsilon)=M+\epsilon M_1$.
Now apply Lemma~\ref{le_perturb_subspace_E_at_a_diagonal_matrix}
and Theorem~\ref{th_perturbation_theory_at_a_multiple_eigenvalue}.
\end{proof}

\begin{proof}[Proof of 
Theorem~\ref{th_perturbation_at_multiple_eigenvalue_main}]
Apply
Lemma~\ref{le_perturbation_at_multiple_eigenvalue_main};
it follows that for some
$\epsilon_0>0$ sufficiently small we have
for all $0<\epsilon<\epsilon_0$:
\begin{enumerate}
\item
$$
\lambda_k^1(\epsilon) < \lambda_k^2(\epsilon) < \cdots <
\lambda_k^m(\epsilon),
$$
and
\item
for each $j$, 
$\mec f_k^j(\epsilon)$ and $\mec f_k^{j,0}$
have the same sign at all non-Urschel points 
of $\mec f_k^{j,0}$
\end{enumerate}
(such an $\epsilon_0$ exists for item~(1) since
$\lambda_k^j(\epsilon)=\lambda_k + \epsilon\lambda_k^{j,1}+O(\epsilon^2)$,
and for item~(2) since $\mec f_k^j(\epsilon)=\mec f_k^{j,0}+O(\epsilon)$).

Take any $\epsilon\in(0,\epsilon_0)$: 
since
$$
\lambda_k^1(\epsilon) < \lambda_k^2(\epsilon) < \cdots <
\lambda_k^m(\epsilon) ,
$$
each of these eigenvalues are simple, and then 
Theorem~\ref{th_Urschel_reproved_with_two_signings} implies that
\begin{equation}\label{eq_UN_f_k_j_epsilon}
{\rm UN}(\mec f_k^j(\epsilon))\le k+j-1.
\end{equation} 
Since $\mec f_k^{j,0}$ and $\mec f_k^j(\epsilon)$ agree in sign on all
Urschel vertices of $\mec f_k^{j,0}$,
$\mec f_k^j(\epsilon)$ is a partial signing of $\mec f_k^{j,0}$, and therefore
$$
{\rm UN}(\mec f_k^{j,0}) \le {\rm UN}(\mec f_k^j(\epsilon)).
$$
Combining this with \eqref{eq_UN_f_k_j_epsilon} gives
\begin{equation}\label{eq_UN_f_k_sup_j_zero_bound}
{\rm UN}(\mec f_k^{j,0}) \le k+j-1.
\end{equation} 

Similarly there exists $\epsilon_0$ sufficiently small such that if
$-\epsilon_0<\epsilon<0$ then items~(1) and~(2) above hold with~(1)
replaced by
$$
\lambda_k^m(\epsilon) < \lambda_k^{m-1}(\epsilon) < \cdots <
\lambda_k^1(\epsilon)
$$
(since $\epsilon$ is small and {\em negative}).
Then for $j=0,\ldots,m-1$, $\lambda_k^{m-1-j}(\epsilon)$ is the
$(k+j)$-th largest eigenvalue, and hence we similarly get
$$
{\rm UN}(\mec f_k^{m-1-j,0}) \le k+j-1,
$$
and therefore for $j\in[m]$ we have
\begin{equation}\label{eq_UN_f_k_sup_j_zero_bound_in_other_direction}
{\rm UN}(\mec f_k^{j,0}) \le k+m-j.
\end{equation} 

So now take
$$
\mec f_k=\mec f_k^{1,0},\ \mec f_{k+1}=\mec f_k^{2,0},\ \ldots, \mec f_{k+m-1}=\mec f_k^{m,0}.
$$
It follows that $\mec f_k,\ldots,\mec f_{k+m-1}$ is an orthonormal basis for
the eigenspace $E(M,\lambda_k)$,
and
\eqref{eq_UN_f_k_sup_j_zero_bound} and
\eqref{eq_UN_f_k_sup_j_zero_bound_in_other_direction} imply
\eqref{eq_UN_multiple_eigenvalue_two_sided_bound}.
\end{proof}

\section{The Ladder, Revisited}
\label{se_ladder_again}

Consider again the $1=\lambda_3=\lambda_4$ eigenspace of the
classical Laplacian of the ladder graph:

\begin{tikzpicture}[scale=0.20]
\node at (-22,0) {$\forall a,b\in\reals$,};
\tikzmath{ 
  int \j ;
}
\foreach \i in {1,...,3} {
  \filldraw (0,6 - \i * 3) circle (5pt);
  \filldraw (10,6 - \i * 3) circle (5pt);
  \tikzmath{
    \j = \i + 3;
  }
  \filldraw (5, 2 - \i * 1 ) circle (3pt);
}
\node[anchor=east]  at (-1,3) {\Small $f(v_1)=a$};
\node[anchor=east] at (-1,0) {\Small $f(v_2)=0$};
\node[anchor=east] at (-1,-3) {\Small $f(v_3)=-a$};
\node[anchor=west] at (11,3) {\Small $f(v_4)=b$};
\node[anchor=west] at (11,0) {\Small $f(v_5)=0$};
\node[anchor=west] at (11,-3) {\Small $f(v_6)=-b$};
\draw (0,3) to (0,-3);
\draw (10,3) to (10,-3);
\filldraw (5,5) circle (5pt);
\node at (5,6.5) {\Small $f(v_7)=0$};
\filldraw (5,-5) circle (5pt);
\node at (5,-6.5) {\Small $f(v_n)=0$};
\draw (0,0) to (5,5);
\draw (10,0) to (5,5);
\draw (0,0) to (5,-5);
\draw (10,0) to (5,-5);
\filldraw (5,3) circle (5pt);
\filldraw (5,-3) circle (5pt);
\draw (0,0) to (5,3);
\draw (10,0) to (5,3);
\draw (0,0) to (5,-3);
\draw (10,0) to (5,-3);
\end{tikzpicture}

It is interesting to compare the perturbation method with
Urschel's algorithm on this example.

Consider $f_1\in E(\Delta_G,\lambda_3)$ to be the
case $a=1$ and $b=0$, and similarly for $f_2$
and $a=0$ and $b=1$.
Then $f_1$ and $f_2$ are orthogonal; also,
we easily see that for any symmetric $M_1$ that is supported on $G$ we
have $(M_1f_1,f_2)=0$.  It follows that $PM_1 f_1$ is proportional to $f_1$,
and $PM_1 f_2$ to $f_2$.
Hence for any such $M_1$, if $PM_1$ has distinct eigenvalues, then
the eigenvectors are necessarily $f_1,f_2$.
Hence perturbation theory does not yield a full measure subset of
$E(\Delta_G,\lambda_3)$ as possible lowest eigenvalues.
However, it is interesting
that for $i=1,2$ we have
$$
{\rm WND}(f_i)={\rm UN}(f_i)={\rm SND}(f_i)=2 < 3,
$$
since the corresponding eigenvalues are $\lambda_3=\lambda_4=1$.

Note that the ladder also points out a shortcoming of the perturbation
method: namely, perturbation theory --- as done in this article ---
will only find the eigenvectors
with $a=0$ or $b=0$; therefore the perturbation theory never finds a 
full measure subset of the eigenvectors of $\lambda_3$.

\newcommand{\ignoreThisInFinalVersion}[1]{}
\ignoreThisInFinalVersion{
By contrast, Urschel's method looks at the connected components of
(1) subgraph of $G$ induced 
on the non-Urschel vertices, $\cN=\{v_1,v_3,v_4,v_6\}$, of $E(\Delta,1)$,
and 
(2) the same for the Urschel vertices, $\cU=\{v_2,v_5,v_7,\ldots,v_n\}$.
See Figure~\ref{fi_Urschel_nonUrschel_vertices_ladder}.
\begin{figure}[ht]
$$
\begin{array}{cc}
\begin{tikzpicture}[scale=0.20]
\tikzmath{ 
  int \j ;
}
\foreach \i in {1,...,3} {
  \filldraw (0,6 - \i * 3) circle (5pt);
  \filldraw (10,6 - \i * 3) circle (5pt);
  \tikzmath{
    \j = \i + 3;
  }
  \filldraw (5, 2 - \i * 1 ) circle (3pt);
}
\node[anchor=east]  at (-1,3) {\Small $f(v_1)=a$};
\node[anchor=east] at (-1,0) {\Small $f(v_2)=0$};
\node[anchor=east] at (-1,-3) {\Small $f(v_3)=-a$};
\node[anchor=west] at (11,3) {\Small $f(v_4)=b$};
\node[anchor=west] at (11,0) {\Small $f(v_5)=0$};
\node[anchor=west] at (11,-3) {\Small $f(v_6)=-b$};
\draw (0,3) to (0,-3);
\draw (10,3) to (10,-3);
\filldraw (5,5) circle (5pt);
\node at (5,6.5) {\Small $f(v_7)=0$};
\filldraw (5,-5) circle (5pt);
\node at (5,-6.5) {\Small $f(v_n)=0$};
\draw (0,0) to (5,5);
\draw (10,0) to (5,5);
\draw (0,0) to (5,-5);
\draw (10,0) to (5,-5);
\filldraw (5,3) circle (5pt);
\filldraw (5,-3) circle (5pt);
\draw (0,0) to (5,3);
\draw (10,0) to (5,3);
\draw (0,0) to (5,-3);
\draw (10,0) to (5,-3);
\node at (5,-10) {\Small The subspace $E(\Delta_G,1)$};
\end{tikzpicture}
\quad&\quad
\begin{tikzpicture}[scale=0.20]
\tikzmath{ 
  int \j ;
}
\foreach \i in {1,3} {
  \filldraw (0,6 - \i * 3) circle (5pt);
  \node at (-1.5,6 - \i * 3) {\Small $v_\i$};
  \filldraw (10,6 - \i * 3) circle (5pt);
  \tikzmath{
    \j = \i + 3;
  }
  \node at (11.5,6 - \i * 3) {\Small $v_\j$};
}
%
\node at (5,-10) {\Small Induced subgraph on, $\cN$, the non-Urschel vertices};
\end{tikzpicture}
\\
\begin{tikzpicture}[scale=0.20]
\tikzmath{ 
  int \j ;
}
\foreach \i in {2} {
  \filldraw (0,6 - \i * 3) circle (5pt);
  \node at (-1.5,6 - \i * 3) {\Small $v_\i$};
  \filldraw (10,6 - \i * 3) circle (5pt);
  \tikzmath{
    \j = \i + 3;
  }
  \node at (11.5,6 - \i * 3) {\Small $v_\j$};
  \filldraw (5, 2 - \i * 1 ) circle (3pt);
}
\filldraw (5, 2 - 1 * 1 ) circle (3pt);
\filldraw (5, 2 - 3 * 1 ) circle (3pt);
\filldraw (5,5) circle (5pt);
\node at (5,6.5) {\Small $v_7$};
\filldraw (5,-5) circle (5pt);
\node at (5,-6.5) {\Small $v_n$};
\draw (0,0) to (5,5);
\draw (10,0) to (5,5);
\draw (0,0) to (5,-5);
\draw (10,0) to (5,-5);
\filldraw (5,3) circle (5pt);
\filldraw (5,-3) circle (5pt);
\draw (0,0) to (5,3);
\draw (10,0) to (5,3);
\draw (0,0) to (5,-3);
\draw (10,0) to (5,-3);
\node at (5,-10) {\Small Induced subgraph on, $\cU$, the Urschel vertices};
\end{tikzpicture}
\quad&\quad
\begin{tikzpicture}[scale=0.20]
\tikzmath{ 
  int \j ;
}
\foreach \i in {1,2,3} {
  \filldraw (0,6 - \i * 3) circle (5pt);
  \filldraw (10,6 - \i * 3) circle (5pt);
  \tikzmath{
    \j = \i + 3;
  }
}
\draw[dashed] (0,3) to (0,-3);
\draw[dashed] (10,3) to (10,-3);
\node[anchor=east]  at (-1,3) {\Small $f(v_1)=\alpha_1$};
\node[anchor=east]  at (-1,0) {\Small Shallow Urschel $v_2$};
\node[anchor=east] at (-1,-3) {\Small $f(v_3)=\alpha_2$};
\node[anchor=west] at (11,3) {\Small $f(v_4)=\alpha_3$};
\node[anchor=west]  at (11,0) {\Small Shallow Urschel $v_5$};
\node[anchor=west] at (11,-3) {\Small $f(v_6)=\alpha_4$};
\node at (5,-6) {\Small Urschel's construction: the two shallow};
\node at (5,-8) {\Small vertices yield two linear equations};
\node at (5,-10) {\Small $\alpha_1+\alpha_2=0$ and $\alpha_3+\alpha_4=0$};
\end{tikzpicture}
\label{fi_Urschel_nonUrschel_vertices_ladder}
\end{array}
$$
\caption{Urschel's method: look at the connected components of $G$'s
subgraph induced on (1) $\cN$, the non-Urschel vertices, and (2)
$\cU$, the Urschel vertices.  Hence (1) has 4 connected components
(each an isolated vertex), and (2) has a single connected component.
Here $M$ restricted to $\cN\times\cN$ is the identity matrix, so
$\lambda=1$ is an eigenvalue of multiplicity $4$ there.
Each shallow Urschel vertex, here $v_2$ and $v_5$, determines a linear
equation on $\alpha_1,\ldots,\alpha_4$, which Urschel writes as
$\alpha_1=-\alpha_2$, $\alpha_3=-\alpha_4$ with $\alpha_1,\alpha_3$ the
fixed variables in terms of $\alpha_2,\alpha_4$ the free variables.
Urschel's algorithm
chooses $\alpha_2>0$, then chooses one sign for the single connected
component of Urschel variables, then chooses $\alpha_4$ to be of this
same sign, thereby reducing connecting two of the 4 connected components in
$\cN$ through this Usrchel component.
Because one only has sign requirements on $\alpha_2$ and $\alpha_4$, we
get a full dimension subset of $E(\Delta_G,1)$.}
\end{figure}
and $M$ restricted to $\cN\times\cN$ is
the identity matrix.  His method begins with
this $4$-dimensional set of eigenvectors with eigenvalue $1$,
namely all of $\reals^\cN$; it follows that any $f\in\reals^{\cN}$,
if extended by 0 to the Urschel vertices, would have four strong
nodal regions.
Urschel considers the linear
conditions on $\reals^\cN$ required to create a zero sum at each
Urschel vertex, of which there are two conditions (since there are
two shallow Urschel vertices).  This gives the two-dimensional
space of functions in $f\in\reals^\cN$ such that 
$f(v_1)=-f(v_3)$ and $f(v_4)=-f(v_6)$, so $f(v_3),f(v_6)$ are viewed
as free variables to this system of two equations.
He then argues that by choosing the appropriate signs for the
pivot variables and choosing a sign (or ``charge'') at the
Urschel vertices, the four original nodal regions is reduced to
three.
Since the Urschel vertices are connected in $G$, Urschel's method
picks a single signing (or ``charge'') to all Urschel vertices
at once.
Hence Urschel assigns a sign to the pivot variables in the
linear equations and signs on the Urschel vertices in a way that
drops the number of strong nodal domains from four to three.
Since all he needs is to specify the signs of the pivot variables,
Urschel's method shows that there is a full measure subset of
$E(\Delta_G,1)$ whose Urschel number is at most $3$.

Hence Urschel's method gives a much larger part of eigenvectors
in $E(\Delta_G,1)$ whose Urschel number is at most $3$.

By contrast, the perturbation method shows that 
both $f_1$ and $f_2$ above, which are orthogonal, have Urschel
number at most $3$.

Of course, in this example more is true: we easily see that
any element of $E(\Delta_G,1)$ has Urschel number at most $3$,
and $f_1,f_2$ above have Urschel number $2$.
}

\section{Examples with Arbitrary Number of Shallow and Deep Urschel Vertices}
\label{se_shallow_deep}

Let $s,k\in\naturals$ and $\ell\in\integers_{\ge 0}$.  We will build
a connected graph, $G$, and a generalized Laplacian, $M$, on $G$
such that $\lambda_2(M)$ is a simple eigenvalue
(multiplicity one), and such that the corresponding eigenvector
$\mec f_2$ has $s$ shallow Urschel vertices, $\ell$ deep Urschel vertices,
and $n=s+\ell+2k$ vertices.
We give a specific example first, namely the graph, $G$, depicted in 
Figure~\ref{fi_example_any_number_of_deep_and_shallow};
we then indicate a much more general class of examples.

\newcommand{\myFirstArbitraryNumberShallowDeep}[5]{
\begin{tikzpicture}[scale=0.30]
\foreach \i in {-1,1} {
  \filldraw (0, \i * #1) circle (5pt);
  \filldraw (-#2, \i * #1) circle (5pt);
  \node at (-#2*2, \i * #1) {$\cdots$};
  \filldraw (-#2*3, \i * #1) circle (5pt);
  \draw (0,\i * #1) -- (-#2*1.5,\i * #1);
  \draw (-#2*2.5,\i * #1) -- (-#2*3,\i * #1);
}
\node at (0, #1 + 1.2) {$v_1$};
\node at (-#2, #1 + 1.2) {$v_2$};
\node at (-#2*3, #1 + 1.2) {$v_k$};
\node at (0, -#1 - 1.2) {$v'_1$};
\node at (-#2, -#1 - 1.2) {$v'_2$};
\node at (-#2*3, -#1 - 1.2) {$v'_k$};
\filldraw (#2,0) circle (5pt);
\node at (#2-1.2,0) {$u_1$};
\draw (0,-#1) -- (#2,0) -- (0,#1);
\filldraw (#2-#3,0) circle (5pt);
\node at (#2-#3-1.2,0) {$u_2$};
\draw (0,-#1) -- (#2-#3,0) -- (0,#1);
\node at (#2-2*#3,0) {$\cdots$};
\filldraw (#2-3*#3,0) circle (5pt);
\node at (#2-3*#3-1.2,0) {$u_s$};
\draw (0,-#1) -- (#2-3*#3,0) -- (0,#1);
\filldraw (#2+#4,#5) circle (5pt);
\node at (#2+#4+1.2,#5) {$w_1$};
\draw (#2,0) -- (#2+#4,#5);
\node at (#2+#4,0.2) {$\vdots$};
\filldraw (#2+#4,-#5) circle (5pt);
\node at (#2+#4+1.2,-#5) {$w_\ell$};
\draw (#2,0) -- (#2+#4,-#5);
\end{tikzpicture}
}

\begin{figure}[ht]
$$
\myFirstArbitraryNumberShallowDeep{4}{4}{3}{5}{2}
$$
\caption{The graph $G$: it consists of two paths, a top path with vertices
$v_1,\ldots,v_k$, and a bottom path $v'_1,\ldots,v'_k$; $v_1$ and $v'_1$ are
both connected to vertices $u_1,\ldots,u_s$, and $u_1$ is connected to
vertices $w_1,\ldots,w_\ell$.
Once we have described a generalized Laplacian, $M$ on $G$,
$\lambda_2(M)$ will be a simple eigenvalue (multiplicity 1), and for
the corresponding eigenvector $\mec f_2$:
the $v_i$ and $v'_i$ will be the non-Urschel;
the $u_i$ will be the shallow Urschel vertices;
and the $w_i$ will be the deep Urschel vertices.}
\label{fi_example_any_number_of_deep_and_shallow}
\end{figure}

So consider this graph, and let
$M=M(\mu)=\Delta_G+\mu I_U-(s-1) I_1$, where 
$\Delta_G$ is the graph Laplacian, $\mu\in\reals$,
$I_U$ is the identity matrix restricted to $u_1,\ldots,u_s$ and
is zero elsewhere, and $I_1$ is the identity matrix restricted to
$v_1,v_1'$.  Hence we may write $M$ in block form as
\begin{equation}\label{eq_M_mu_in_block_form}
M(\mu) =
\begin{bmatrix} A + \mu I & B \\ C & D \end{bmatrix} ;
\end{equation} 
since $M(\mu)$ is symmetric, we have $A,D$ are symmetric and
$B=C^{\rm T}$.
We now consider $M(\mu)$ with positive $\mu\in\reals$ large.

\subsection{The Eigenvalues of $M(\mu)$ with $\mu$ Large}

\begin{proposition}
Let $M=M(\mu)$ by any symmetric matrix of the
form \eqref{eq_M_mu_in_block_form}, with $A,B,C,D$ fixed
(we stick to the convention that $A$ is of size $s\times s$ and
$D$ is $(\ell+2k)\times(\ell+2k)$).
As $\mu\to\infty$, $M(\mu)$ has the following eigenvalues:
\begin{enumerate}
\item 
$s$ eigenvalues equal to $\mu+\alpha_i+O(1/\mu)$, where
$\alpha_1,\ldots,\alpha_s$ are the eigenvalues of $A$; and
\item 
$\ell+2k$ eigenvalues equal to $\beta_i+O(1/\mu)$, where
$\beta_1,\ldots,\beta_{\ell+2k}$ are the eigenvalues of $D$.
\end{enumerate}
\end{proposition}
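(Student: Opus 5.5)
We will obtain this proposition by rescaling $M(\mu)$ so that it becomes a small perturbation of a fixed matrix, and then applying the perturbation theory of Section~\ref{se_perturbation}, or more simply Weyl's inequality together with a Schur complement argument. Write $\epsilon=1/\mu$ and
$$
\epsilon M(\mu) = \begin{bmatrix} I & 0 \\ 0 & 0\end{bmatrix}
+ \epsilon \begin{bmatrix} A & B \\ C & D\end{bmatrix}.
$$
The unperturbed matrix has eigenvalue $1$ with eigenspace $\reals^s\oplus 0$ and eigenvalue $0$ with eigenspace $0\oplus\reals^{\ell+2k}$. By Weyl's inequality, $M(\mu)$ has exactly $s$ eigenvalues of the form $\mu+O(1)$ and $\ell+2k$ eigenvalues that are $O(1)$. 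This separates the spectrum into two groups, and we refine each group separately.

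For the $O(1)$ group we use the Schur complement. If $\lambda$ is bounded, then $A+\mu I-\lambda I$ is invertible for large $\mu$, with inverse $\mu^{-1}I+O(\mu^{-2})$. Hence $\lambda$ is an eigenvalue of $M(\mu)$ if and only if it is an eigenvalue of
$$
D - C(A+\mu I-\lambda I)^{-1}B = D - \mu^{-1}CB + O(\mu^{-2}),
$$
uniformly for $\lambda$ in a bounded set. More precisely, the $O(1)$ eigenvalues are exactly the fixed points of $\lambda\mapsto$ eigenvalues of the symmetric matrix $S(\lambda)=D-C(A+(\mu-\lambda)I)^{-1}B$. Since $\|S(\lambda)-D\|=O(1/\mu)$, Weyl's inequality puts each such eigenvalue within $O(1/\mu)$ of an eigenvalue of $D$. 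To match the counts with multiplicity, we use a continuity (homotopy) argument: replace $B$ by $tB$ for $t\in[0,1]$. At $t=0$ the statement is exact. The two groups stay separated for all $t$, because the Weyl bound is uniform in $t$, and the eigenvalues move continuously. So the count of eigenvalues near each cluster of eigenvalues of $D$ is preserved, and ordering them gives $\beta_i+O(1/\mu)$.

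For the large group we apply the same argument with the roles of the blocks swapped. Set $\lambda=\mu+\nu$ with $\nu$ bounded. Then $D-\lambda I$ is invertible with inverse $O(1/\mu)$, and $\nu$ must be an eigenvalue of $A - B(D-(\mu+\nu)I)^{-1}C = A+O(1/\mu)$. The same homotopy and counting argument gives $\mu+\alpha_i+O(1/\mu)$. We can also obtain both groups at once from the operator-norm block diagonalization: a rotation $\exp(K)$ with $K=O(1/\mu)$ off-diagonal kills $B$ up to $O(1/\mu)$ in each diagonal block. However, the Schur complement route is more elementary.

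The main obstacle is the bookkeeping of multiplicities when $A$ or $D$ has repeated eigenvalues. The fixed-point or Schur argument shows that each eigenvalue lies near some $\beta_i$, but not that the number near each $\beta_i$ is correct. We plan to handle this with the homotopy in $t$ described above, or alternatively with min-max: Courant--Fischer with test spaces $0\oplus V$ for $V$ an eigenspace span of $D$ gives the upper bounds on the ordered eigenvalues. The lower bounds follow by applying the same reasoning to the Schur complement with the bounded group isolated. Either way, the error is $O(1/\mu)$ because all off-diagonal corrections are quadratic in $B$ divided by $\mu$.
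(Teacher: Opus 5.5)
Your proposal is correct and follows essentially the paper's route: Schur complements of $\lambda I-M(\mu)$, using the substitution $\lambda=\mu+\nu$ with the second formula for the large group and the first formula for the bounded group, followed by the stability of eigenvalues of symmetric matrices under perturbation. Your additional Weyl separation of the spectrum into the two groups, and your homotopy $B\mapsto tB$, $C\mapsto tC$ for counting multiplicities, make rigorous a point the paper passes over: it treats the $\nu$-dependent matrix $A+B(\mu I+\nu I-D)^{-1}C$ as if it were fixed, and simply asserts that the $n$ roots it finds exhaust the spectrum with the correct multiplicities.
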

This is a standard type of result; physically 
this corresponds to having infinite
(positive) potential at the $U$ vertices.

To prove this we will use the Schur complement formulas
\begin{equation}\label{eq_first_Schur_complement_formula}
\det\begin{bmatrix} A & B \\ C & D \end{bmatrix} =
\det[A] \det[D-CA^{-1}B]
\end{equation} 
if $A$ is invertible, and
\begin{equation}\label{eq_second_Schur_complement_formula}
\det\begin{bmatrix} A & B \\ C & D \end{bmatrix} =
\det[D] \det[A-BD^{-1}C]
\end{equation} 
if $D$ is invertible.
Plus we use the fact that 
if $A,A'$ are two symmetric matrices, then the eigenvalues
of $A$ and of $A+A'$ differ as sets from one another by $\|A'\|_{L^2}$
(the $L^2$-operator norm of $A'$, i.e., its spectral radius).
\begin{proof}
The eigenvalues of $M(\mu)$, which are the roots, $\lambda$,
of the equation:
\begin{equation}\label{eq_characteristic_equations_M_mu}
\det\bigl(\lambda I- M(\mu) \bigr)
=
\det
\begin{bmatrix} (\lambda-\mu)I - A  & -B \\ -C & \lambda I - D \end{bmatrix} .
\end{equation} 
Let us search for $s$ roots
of the form $\lambda=\mu+\nu$, where $|\nu|$ is bounded
by the spectral radius of $A$ plus $1$ (or some constant): we write the
right-hand-side of \eqref{eq_characteristic_equations_M_mu}
$$
\det
\begin{bmatrix} (\lambda-\mu)I - A  & -B \\ -C & \lambda I - D \end{bmatrix} 
=
\det
\begin{bmatrix}\nu I-A & -B \\ -C & \mu I + \nu I - D \end{bmatrix}
$$
which by \eqref{eq_second_Schur_complement_formula} equals
$$
=
\det[\mu I + \nu I - D]\det[\nu I-A - B(\mu I + \nu I - D)^{-1}C].
$$
For $\mu$ sufficiently large, $\det[\mu I + \nu I - D]$ is nonzero in our
range of $\nu$, and 
$$
(\mu I + \nu I - D)^{-1} = \mu^{-1}\bigl(I + (\nu I-D)/\mu \bigr)^{-1}
= \mu^{-1}\bigl(I + O(1/\mu)\bigr) .
$$
Hence the roots, $\nu$, of 
$$
\det[\nu I-A - B(\mu I + \nu I - D)^{-1}C] =
\det[\nu I - A - BC/\mu + O(1/\mu^2)]
$$
are the eigenvalues of matrix equal to $A-BC/\mu+O(1/\mu^2)$.
Hence --- since this matrix is symmetric ---
the eigenvalues of this matrix differs from that of $A$ by
$$
\bigl\| BC/\mu+O(1/\mu^2) \bigr\|_{L^2} \le O(1)/\mu
$$
as $\mu\to\infty$.
Setting $\lambda=\mu+\nu$ gives the first $s$ eigenvalues of the
form $\lambda=\mu+\alpha_i+O(1/\mu)$.

We similarly look for eigenvalues $\lambda$ with $\lambda$ bounded by
the spectral radius of $D$ plus 1, and similarly --- using 
\eqref{eq_first_Schur_complement_formula} --- find 
eigenvalues $\lambda=\beta_i+O(1/\mu)$.

Hence we have found $n=s+\ell+2k$ eigenvalues, which are all the eigenvalues
of $M(\mu)$ for $\mu$ sufficiently large.
\end{proof}

\subsection{The Eigenvalues of $D$}

Now consider the eigenvalues of $D$ in \eqref{eq_M_mu_in_block_form};
$D$ is the restriction of $M$ to the square submatrix indexed
$V\cup V'\cup W$ where
$$
V=\{v_1,\ldots,v_k\},
\ V'=\{v'_1,\ldots,v'_k\},
\ W=\{w_1,\ldots,w_\ell\}.
$$
Note that $G'$, the subgraph of $G$ induced on $V\cup V'\cup W$, has
$W$ as isolated vertices plus two discoonnected paths on the vertex
sets $V$ and $V'$.  It follows that $D=\Delta_{G'}+D'-(s-1)I_1$, where 
$D'$ is the diagonal matrix that counts how many $u_1,\ldots,u_s$ vertices
are indicent upon each vertex of $V\cup V'\cup W$;
therefore:
$D'(v_1,v_1)=D'(v'_1,v'_1)=s$, $D'(w_i,w_i)=1$ for all $i\in[\ell]$, and 
$D'(v_i,v_i)=D'(v'_i,v'_i)=0$ for all $2\le i\le k$.
It follows that $D'$ is the sum of the Laplacian on each path, plus
the identity matrix on $W$.
Hence the eigenvalues of $D$ are: $1$, with multiplicity $\ell$,
and two copies of the spectrum of Laplacian on a path of length $k$.
Hence $D$ has two eigenvalues that are zero, and the rest are bounded
away from zero, say $\ge c$ for a constant, $c>0$ (depending on $k$).

It follows that for $\mu$ sufficiently large, we have that
$M(\mu)$ has exactly two eigenvalues between $-c/3$ and $c/3$,
and the rest of the eigenvalues $\ge 2c/3$.

\subsection{Even and Odd Eigenfunctions}

Let $\sigma\from G\to G$ be the symmetry of $G$ that takes
$v_i$ to $v_i'$ and vice versa, and that fixes all other vertices.
By an {\em even function} (respectively, {\em odd function}) we mean a
function
$\mec f\from V_G\to\reals$ such that $\mec f\sigma=\mec f$
(respectively $\mec f\sigma=-\mec f$).
Then we easily see that
$M(\mu)$ takes even functions to even functions and odd to odd; hence
the spectrum of $M(\mu)$ decomposes to that on even functions and
that on odd functions.
Note that an odd function $\mec f\from V_G\to\reals$
has $f(v'_i)=f(v_i)$ for all $i$, and $\mec f=0$ on all other vertices.

For any value of $\mu$, since $G$ is connected we have that
$\lambda_1(M(\mu))$ is a simple eigenvalue and its eigenfunction
is (after scaling) positive on all vertices.  Hence this function
is an even function.

Now consider the odd function, $\mec f_2$, that is $1$ on $V$ and $-1$ on $V'$;
see Figure~\ref{fi_mec_f_two}.

\newcommand{\mySecondEigenfunctionForFirstArbitraryNumberShallowDeep}[5]{
\begin{tikzpicture}[scale=0.30]
\foreach \i in {-1,1} {
  \filldraw (0, \i * #1) circle (5pt);
  \filldraw (-#2, \i * #1) circle (5pt);
  \node at (-#2*2, \i * #1) {$\cdots$};
  \filldraw (-#2*3, \i * #1) circle (5pt);
  \draw (0,\i * #1) -- (-#2*1.5,\i * #1);
  \draw (-#2*2.5,\i * #1) -- (-#2*3,\i * #1);
}
\node at (0, #1 + 1.2) {$1$};
\node at (-#2, #1 + 1.2) {$1$};
\node at (-#2*3, #1 + 1.2) {$1$};
\node at (0, -#1 - 1.2) {$-1$};
\node at (-#2, -#1 - 1.2) {$-1$};
\node at (-#2*3, -#1 - 1.2) {$-1$};
\filldraw (#2,0) circle (5pt);
\node at (#2-1.2,0) {$0$};
\draw (0,-#1) -- (#2,0) -- (0,#1);
\filldraw (#2-#3,0) circle (5pt);
\node at (#2-#3-1.2,0) {$0$};
\draw (0,-#1) -- (#2-#3,0) -- (0,#1);
\node at (#2-2*#3,0) {$\cdots$};
\filldraw (#2-3*#3,0) circle (5pt);
\node at (#2-3*#3-1.2,0) {$0$};
\draw (0,-#1) -- (#2-3*#3,0) -- (0,#1);
\filldraw (#2+#4,#5) circle (5pt);
\node at (#2+#4+1.2,#5) {$0$};
\draw (#2,0) -- (#2+#4,#5);
\node at (#2+#4,0.2) {$\vdots$};
\filldraw (#2+#4,-#5) circle (5pt);
\node at (#2+#4+1.2,-#5) {$0$};
\draw (#2,0) -- (#2+#4,-#5);
\end{tikzpicture}
}

\begin{figure}[ht]
$$
\mySecondEigenfunctionForFirstArbitraryNumberShallowDeep{4}{4}{3}{5}{2}
$$
\caption{$\mec f_2$, which is the eigenfunction of the smallest odd
eigenvalue, and the second smallest eigenvalue overall.}
\label{fi_mec_f_two}
\end{figure}

We easily see that $M(\mu)\mec f_2=0$; hence
$\lambda_2(M(\mu))\le 0$, and this must be one of the two eigenvalues that
are $\le c/3$ in absolute value.
Hence for $\mu$ sufficiently large, $\lambda_2(M(\mu))=0>\lambda_1(M(\mu))$,
and so $\lambda_2(M(\mu))$ is a simple eigenvalue.

Now we see that each $u_i$ is a shallow Urschel vertex of $\mec f_2$,
and each $w_i$ is a deep Urschel vertex, and the $v_i$ and $v_i'$
are non-Urschel vertices.
It follows that $\mec f_2$ has $s$ shallow Urschel vertices, 
$\ell$ deep Urschel vertices, and $2k$ non-Urschel vertices.

\subsection{The Illustration of Theorem~\ref{th_Urschel_most_general}}

Now we can check that $\mec f_2$ has
$$
{\rm UN}_{2^s}(\mec f_2)\le 2,
$$
since we can take any sign
pattern for $\mec f_2$ on $u_1,\ldots,u_s$; as long as
the sign on each $w_i$ is that of $u_1$, we get exactly two nodal regions.
Notice also that if $\ell\ge 1$ then
$$
{\rm UN}_{2^s+1}(\mec f_2)=3,
$$
since given any sign pattern on $u_1,\ldots,u_s$, we get three or more
nodal regions if we do not have $w_1,\ldots,w_\ell$ having the same
sign as $u_1$ (and we get exactly three nodal regions by taking
$w_1,\ldots,w_{s-1}$ to have the same sign as $u_1$ and $w_s$ to have
the opposite sign).

Hence this is a case where in the inequality
$$
{\rm UN}_{2^s}(\mec f_k)\le k,
$$
we cannot improve $2^s$ to $2^s+1$.

\subsection{A More General Example}

The above example can be generalized as follows:
consider the graph, $G$, that is created from the following data:
we take two copies of the same connected graph, $H=(V,E)$ and $H'=(V',E')$
(i.e., $H$ and $H'$ are isomorphic).
We add vertices $U=\{u_1,\ldots,u_s\}$, such that each $u_i$ has
an edge to some vertices of $H$, and the same corresponding vertices in $H'$.
Finally we add vertices $W=\{w_1,\ldots,w_\ell\}$ here each 
$w_i$ is adjacent to any subset of the vertices of $U$.

Then $G$ has a symmetry $\sigma\from G\to G$ that exchanges $H$ and $H'$
and fixes the vertices in $U$ and $W$.
A generalized Laplacian similar to $M(\mu)$ above (with $D$ being the
sum of Laplacians of $H$ and $H'$, plus having a nonzero diagonal entry
on each $w_i$) therefore
has $\lambda_2$ as a simple eigenvalue with $\mec f_2$ (which is $1$ on $V_H$,
$-1$ on $V_{H'}$, and $0$ elsewhere) having
$s$ shallow Urschel vertices, $\ell$ deep Urschel vertices,
and $2|V_H|$ non-Urschel vertices.

We remark that if some $w_i\in W$ is adjacent to two Urschel vertices,
say $w_1$ is adjacent to $u_1$ and $u_2$,
then
$$
{\rm UN}_{2^s+1}(\mec f_2)=2
$$
because when $u_1$ and $u_2$ have opposite signs, $w_1$ can take on
either sign.
See Figure~\ref{fi_generalization_but_UN_two_to_the_s_not_tight}.

\newcommand{\myGeneralizationButUNtwoToTheSNotTight}[5]{
\begin{tikzpicture}[scale=0.30]
\foreach \i in {-1,1} {
  \filldraw (0, \i * #1) circle (5pt);
  \filldraw (4, \i * #1) circle (5pt);
  \filldraw (8, \i * #1) circle (5pt);
  \filldraw (12, \i * #1) circle (5pt);
  \filldraw (4, \i * #1 *1.8) circle (5pt);
  \draw (0, \i * #1) -- (12, \i * #1);
  \draw (0, \i * #1) -- (4, \i * #1 *1.8) -- (12, \i * #1);
  \draw (4, \i * #1) -- (4, \i * #1 *1.8);
}
\node at (-4,#1 *1.4) {$H$};
\node at (-4,-#1 *1.4) {$H'$};
\filldraw (-2,0) circle (5pt);
\draw (0,#1) -- (-2,0) -- (0,-#1);
\node at (-3.2,0) {$u_1$};
\filldraw (6,0) circle (5pt);
\node at (7.2,0) {$u_2$};
\draw (4,#1) -- (6,0) -- (4,-#1);
\draw (8,#1) -- (6,0) -- (8,-#1);
\filldraw (12,0) circle (5pt);
\node at (10.8,0) {$u_3$};
\filldraw (2,0) circle (5pt);
\node at (2,-1.2) {$w_1$};
\draw (12,#1) -- (12,-#1);
\draw (-2,0) -- (6,0);
\filldraw (15,2) circle (5pt);
\node at (16.2,2) {$w_2$};
\filldraw (15,-2) circle (5pt);
\node at (16.2,-2) {$w_3$};
\draw (12,0) -- (15,2);
\draw (12,0) -- (15,-2);
\end{tikzpicture}
}
\begin{figure}
$$
\myGeneralizationButUNtwoToTheSNotTight{3}{0}{0}{0}{0}
$$
\caption{A Generalization of the first example.  In this case we
have attached $w_1$ to both $u_1$ and $u_2$.  So $w_1$ remains a
deep Urschel point for $\mec f_2$, but ${\rm UN}_{2^s+1}(\mec f_2)$
is also $2$ since when $u_1,u_2$ have opposite sign, $w_1$ can be
of either sign.  Hence, for this graph the bound
${\rm UN}_{2^s}(\mec f_2)\le 2$
is no longer tight.}
\label{fi_generalization_but_UN_two_to_the_s_not_tight}
\end{figure}

\section{Concluding Remarks}
\label{se_concluding}

{\myred

In this section, we make some concluding remarks.

\subsection{Eigenvalues Are Generically Simple}

In \cite{uhlenbeck}, Uhlenbeck shows that in analysis, a generic
Riemannian metric has its Laplacian eigenvalues simple.
The same is true in graph theory: indeed, if we fix a graph, $G$,
then a generalized Laplacian, $M$, on has simple roots iff
the resultant of its characteristic polynomial, $p=p(M)$, is nonzero.
Since a diagonal matrix can have distinct eigenvalues, it follows that
$p=p(M)$ restricted to generalized Laplacians is a nonzero polynomial
(when we set $m_{ij}=0$ when $i\ne j$ and $\{i,j\}\notin E_G$).
Since $p(M)$ is a nonzero polynomial, it is generically nonzero
(i.e., the set of $M$ on which $p(M)=0$ is a codimension $1$ subset
of all $M$ supported on $G$).

\subsection{Nodal Regions in Analysis}

In analysis, the number of nodal regions of an eigenfunction of a generalized
Laplacian on a closed surface does not increase under perturbations; see
\cite{saha_et_al_nodal_domain}, Theorem~B.
By contrast, for graphs, the number of nodal regions of an eigenfunction
of a generalized Laplacian can increase under perturbations.
For example, if in Figure~\ref{fi_example_any_number_of_deep_and_shallow},
if $s=k=\ell=1$, we get a graph with vertex set
$\{u_1,w_1,v_1,v_1'\}$ which is the same graph as a star with centre
$u_1$:
$$
\begin{tikzpicture}[scale=0.30]
\filldraw(0,4) circle (5pt);
\node at (-1.2,4) {$v_1$};
\filldraw(0,0) circle (5pt);
\node at (-1.2,0) {$u_1$};
\filldraw(0,-4) circle (5pt);
\node at (-1.2,-4) {$v_1'$};
\filldraw(4,0) circle (5pt);
\node at (5.2,0) {$w_1$};
\draw (0,4) -- (0,-4);
\draw (0,0) -- (4,0);
\end{tikzpicture}
$$
Now consider the generalized Laplacian, where we write the matrices in
the vertex order: $u_1,w_1,v_1,v_1'$:
$$
M_0 =
\begin{bmatrix} 
1 & -1 & -1 & -1 \\
-1 & -1 & 0 & 0 \\
-1 & 0 & 0 & 0 \\
-1 & 0 & 0 & 0 
\end{bmatrix}
$$
We easily check that the characteristic polynomial of $M_0$ is
$$
p(\lambda)=\det(\lambda I - M) = \lambda (\lambda^3-4\lambda-2),
$$
and since $\lambda^3-4\lambda-2$ maps $-\infty\mapsto-\infty$, $-1\mapsto 2$
and $0\mapsto -2$, we see that root $\lambda=0$ of $p(\lambda)$ is the
third smallest eigenvalue, i.e., $\lambda_3(M)=0$,
and as in Section~\ref{se_shallow_deep}
has corresponding eigenvector $\mec f_3=(0,0,1,-1)$
(so $u_1$ is a shallow Urschel vertex of $\mec f_3$
and $w_1$ is a deep Urschel vertex).
Letting 
$$
N = 
\begin{bmatrix} 
0 & 0 & -1/2 & -1/2 \\
0 & -1 & 1/2 & 1/2 \\
-1/2 & 1/2 & -1/2 & -1/2 \\
-1/2 & 1/2 & -1/2 & -1/2 
\end{bmatrix}
$$
we easily see that $MN$ takes $\mec f_3$ to $\mec 0$ and is the identity
on $\mec f_3^\perp$.  Hence $N=M_0^+$.
Taking $M_1={\rm diag}(0,0,0,1)$, we have
$$
\mec f_3^1 = M_0^+(-M_1\mec f_3)=M_0^+
\begin{bmatrix} 0 \\ 0 \\ 0 \\ 1 \end{bmatrix}
=
\begin{bmatrix} -1/2 \\ 1/2 \\ -1/2 \\ -1/2 \end{bmatrix}
$$
which has opposite signs on
$u_1$ and $w_1$.  Hence, although
$$
{\rm WND}(\mec f_3)={\rm UN}(\mec f_3)={\rm SND}(\mec f_3)=2,
$$
for a small perturbation $M(\epsilon)=M_0+\epsilon M_1$, the
third eigenvector is simple and --- for $\epsilon\ne 0$ and sufficiently
small --- has opposite signs on $u_1$ and $w_1$.
Hence we easily see that the perturbed eigenvector 
$\mec f_3(\epsilon)=\mec f_3 + \epsilon \mec f_3^1 + O(\epsilon^2)$
has, for small $\epsilon\ne 0$
$$
{\rm WND}(\mec f_3(\epsilon))={\rm UN}(\mec f_3(\epsilon))
={\rm SND}(\mec f_3(\epsilon))=3.
$$

\subsection{Our Perturbations At a Multiple Eigenvalue}

Although we know that generically a generalized Laplacian has simple
eigenvalues, the perturbations near a multiple eigenvalue
in this article are not ``generic perturbations.''
Indeed, in Section~\ref{se_perturbation}, 
Lemma~\ref{le_distinct_eigenvalues} considers only diagonal perturbation
matrices; moreover, we insist that $d_{i+1}> C d_i$ and we restrict
ourselves to only $d_1,\ldots,d_m$ being nonzero.
However, once we find appropriate $M_1$ in
Lemma~\ref{le_reduce}, then we can replace $M_1$ with $\tilde M_1$
as long as $\|M_1-\tilde M_1\|_{L^2}$ is strictly less than the
minimum eigenvalue gap of the eigenvalues of $PM_1P$.
So in Lemma~\ref{le_reduce}, if $PM_1P$ has distinct eigenvalues
on $E$, then so does any $P\tilde M_1 P$ for $\tilde M_1$ in a 
neighbourhood of $M_1$.

Notice that if $M_0$ has more than one multiple eigenvalue, we can
find an $M_1$ such that $M(\epsilon)=M_0+\epsilon M_1$ has all
eigenvalues being simple.  In other words, we claim that 
in Lemma~\ref{le_reduce}, if $E_1,\ldots,E_\ell$
are mutually orthogonal subspaces of $\reals^m$, and $P_i$ is the
orthogonal projection onto $E_i$, then we can find
a single $M_1$ such that $P_i M_1 P_i$ has distinct eigenvalues on
$E_i$ for all $i\in[\ell]$.
To prove this, we give the same argument, but where
$\mec u_1,\ldots,\mec u_m$ in the proof of Lemma~\ref{le_reduce} is
replaced by the union of orthonormal bases for each $E_i$
and we repeat the same argument.

Finally, we note that in analysis it is well-known that in the
analog of Lemma~\ref{le_perturbation_at_multiple_eigenvalue_main},
the $\mec f_k^{1,0},\ldots,\mec f_k^{m,0}$ cannot be choosen a priori
(but, instead, are determined via the perturbation theory).
This is one reason that in perturbation theory in analysis,
most often one assumes that eigenvalues are
simple .
}

\subsection{Mild Improvements to the Gladwell-Zhu Result}

Gladwell-Zhu \cite{gladwell_zhu} showed a generalized
Laplacian, $M$, on a simple graph $G$, has a mutually orthogonal eigenbasis
$f_1,\ldots,f_n$ such that ${\rm SND}(f_k)\le k$ for all $k$.
Partially inspired by 
Theorem~\ref{th_perturbation_at_multiple_eigenvalue_main}, one 
can look for slight improvements of their results in the presence of
multiple eigenvalues.

In what follows, we use the formalism in 
\cite{friedman_geometric_aspects}: namely, we associate to a graph, $G$,
its {\em geometric realization}, $\cG$, where each edge of $G$ is replaced by
a unit length real interval.  The eigenvalues of $M$ can be viewed as
successive minimizers of the Rayleigh quotient
$\cR_M(\mec f)$ defined for functions $\mec f\from V_G\to\reals$ and given by
\begin{equation}\label{eq_Rayleigh_quotient_discrete}
\cR_M(\mec f) = \frac{
(M\mec f)\cdot\mec f
}{
\mec f\cdot\mec f
}
\end{equation} 
To get a ``geometric viewpoint,'' first we write:
$$
(M\mec f)\cdot\mec f  =  \sum_{ \{u,v\}\in E_G } 
\bigl( f(u)-f(v) \bigr)^2 \bigl( - M(u,v) \bigr) 
+ \sum_{v\in V} f(v)^2 C(v)
$$
where 
$$
C(v) = M(v,v) + \sum_{u\sim v} M(u,v) ;
$$
for example, if $M=\Delta_G$, then $C(v)=0$ for all $v\in V_G$.
Then
we consider all piecewise-linear
functions $f\from \cG\to\reals$ and define on such functions a Rayleigh
quotient
$$
\cR_M(f) = 
\frac{
\int_\cG |\nabla f|^2\,d\mu_2+
\int_\cG f^2\,d\mu_3
}{
\int_\cG f^2\,d\mu_1}
$$
where:
\begin{enumerate}
\item $d\mu_1$ is supported on $V_G$ and assigns a measure $1$ to each
$v\in V_G$;
\item $d\mu_3$ is supported on $V_G$ and assigns a measure $C(v)$ to each
$v\in V_G$ (hence this can be negative);
\item $d\mu_2$ is supported on the unit intervals in $\cG$
corresponding to the edges of $G$, and on each interval corresponding
to an edge $\{u,v\}$ equals 
$-M(u,v)$ times
the usual measure $dx$ on the unit interval.
\end{enumerate}
(Compare this to the first two formulas of Section~2 of
\cite{friedman_geometric_aspects}.)
Then the successive minimizers of $\cR_M(f)$ must be linear across
each edge, and the restrictions of each successive 
minimizer to $V_G$ is precisely
the successive minimizer of $\cR_M$ in
\eqref{eq_Rayleigh_quotient_discrete}.
A nodal region of a function $f\from\cG\to\reals$ now becomes a graph
with possibly edges of fractional length and one ``boundary vertex'' along
such an edge (compare to Section~2, \cite{friedman_geometric_aspects}).

We invite the reader who prefers to work with discrete nodal regions,
as is more common, to do so; see 
\cite{gladwell_zhu,davies_et_al2001,duval_reiner}, for example.

To understand our improvement to the Gladwell-Zhu result, 
say that --- for example ---
$\lambda_1 < \lambda_2=\lambda_3=\lambda_4$,
and that we have chosen $f_1,f_2,f_3,f_4$ such that
${\rm SND}(f_k)\le k$ for $k\le 4$.
We claim that either ${\rm SND}(f_4)\le 3$ or one can choose
$f_2',f_3',f_4'$ 
so that $f_1,f'_2,f_3',f'_4$ are mutually orthogonal eigenvectors but
${\rm SND}(f'_k)\le k$ for $2\le k\le 4$ and
${\rm SND}(f'_3)\le 2$:
indeed, if ${\rm SND}(f_4)=4$, then $f_4$ has 
4 strong nodal domains, say $R_1,R_2,R_3,R_4$.
Let $g_i$ be the restriction of $f_4$ to $R_i$, and extended by zero
outside $R_i$, using the geometric nodal domains of
\cite{friedman_geometric_aspects}.
We can choose
nonzero $f_2'=a_1 g_1+a_2 g_2$ to be orthogonal to $f_1$,
and nonzero $f_3'=a_3 g_3+a_4 g_4$ again orthogonal to $f_1$;
then automatically $f_2'$ is orthogonal to $f_3'$.
Hence ${\rm SND}(f_3')=2$.
We can choose nonzero $f'_4=b_1 g_1 + \cdots + b_4 g_4$ orthogonal to
$f_1,f'_2,f'_3$, and hence ${\rm SND}(f'_4)\le 4$ again.

Similarly, if $\lambda_2=\cdots=\lambda_6$, then either
${\rm SND}(f_6)\le 5$, or we could choose
$f'_2$ and $f'_3$ as before, and $f'_4=a_5 g_5+a_6 g_6$ with similar
notation, so that 
${\rm SND}(f'_k)=2$ for $k=2,3,4$.
Alternatively, we could choose:
$f'_2$ to be a combination of $g_1,g_2$,
$f'_3$ to be a combination of $g_4,g_5$,
$f'_4$ of $g_1,g_2,g_3$
and $f'_5$ of $g_4,g_5,g_6$.
Then we have
$$
{\rm SND}(f'_2)={\rm SND}(f'_3)=2,
\ {\rm SND}(f'_4)={\rm SND}(f'_5)=3,
$$
which more closely resembles 
Theorem~\ref{th_perturbation_at_multiple_eigenvalue_main}
(up to some point).

We haven't explored the extent to which one can improve the above result.
However, note that if $\lambda_2<\lambda_3=\lambda_4<\lambda_5$,
then we cannot improve over ${\rm SND}(\mec f_k)\le k$ for $k=3,4$.
Hence the above type improvements don't work for
higher eigenvalues unless its multiplicity is proportionally as 
large
(e.g., you can get some improvement when $\lambda_3$ has multiplicity 4).

\appendix
\section{Details of Eigenvalue Calculations in Section~4}
\label{ap_details_of_ladders_and_split_paths}

In this section we give the details of the eigenvalues computations
claimed in Section~\ref{se_urschel_examples}.

\subsection{Details of A Simple Ladder with (Possibly) Multiple Middle Rungs}
\label{su_details_of_ladder_with_multiple_middle_rungs}

In this subsection we give all the eigenvalues of 
the Laplacian of the graph in 
Subsection~\ref{su_simple_ladder_with_multiple_middle_rungs}.

It will be simpler, at times, to work with $j=n-6$, which is the number
of middle vertices.

We claim that the eigenvalues are
$$
0=\lambda_1 < \lambda_2 < 1=\lambda_3=\lambda_4 <
\lambda_5 < 2=\lambda_6=\cdots = \lambda_{n-2} < \lambda_{n-1}
\le \lambda_n ,
$$
meaning that the eigenvalue $2$ only occurs if $n\ge 8$.  To
see this we observe:
\begin{enumerate}
\item
We have $\lambda_1=0<\lambda_2$ since $G$ is connected.
\item
We have demonstrated above that $1$ is an eigenvalue that occurs with
multiplicity at least $2$, by the functions in
\eqref{eq_eigenvalue_one_for_ladder_with_multiple_rungs}.
\item
The functions
$$
\begin{tikzpicture}[scale=0.20]
\node at (-22,0) {$\forall a,b\in\reals$,};
\tikzmath{ 
  int \j ;
}
\foreach \i in {1,...,3} {
  \filldraw (0,6 - \i * 3) circle (5pt);
  \filldraw (10,6 - \i * 3) circle (5pt);
  \tikzmath{
    \j = \i + 3;
  }
  \filldraw (5, 2 - \i * 1 ) circle (3pt);
}
\node[anchor=east]  at (-1,3) {\Small $f(v_1)=a$};
\node[anchor=east] at (-1,0) {\Small $f(v_2)=b$};
\node[anchor=east] at (-1,-3) {\Small $f(v_3)=a$};
\node[anchor=west] at (11,3) {\Small $f(v_4)=-a$};
\node[anchor=west] at (11,0) {\Small $f(v_5)=-b$};
\node[anchor=west] at (11,-3) {\Small $f(v_6)=-a$};
\draw (0,3) to (0,-3);
\draw (10,3) to (10,-3);
\filldraw (5,5) circle (5pt);
\node at (5,6.5) {\Small $f(v_7)=0$};
\filldraw (5,-5) circle (5pt);
\node at (5,-6.5) {\Small $f(v_n)=0$};
\draw (0,0) to (5,5);
\draw (10,0) to (5,5);
\draw (0,0) to (5,-5);
\draw (10,0) to (5,-5);
\filldraw (5,3) circle (5pt);
\filldraw (5,-3) circle (5pt);
\draw (0,0) to (5,3);
\draw (10,0) to (5,3);
\draw (0,0) to (5,-3);
\draw (10,0) to (5,-3);
\end{tikzpicture}
$$
The equation $\Delta_G f=\lambda f$ for $f$ as above requires, at $v_1$
and $v_2$ respectively:
\begin{align*}
a-b & = \lambda a \\
(j+2) b -2a & = \lambda b
\end{align*}
and hence
$$
p(\lambda)=\lambda^2-(3+j)\lambda+j = 0.
$$
Since $p(0)=-j<0$, $p(1)=-1<0$, one solution lies in the open
interval $(0,1)$.  Since $p(3)=-2j<0$,
and $p(\lambda)\to\infty$ as $\lambda\to\infty$, we have
one solution lies satisfies $\lambda>3$.
\item
Similarly,
the functions
$$
\begin{tikzpicture}[scale=0.20]
\node at (-22,0) {$\forall a,b,c\in\reals$,};
\tikzmath{ 
  int \j ;
}
\foreach \i in {1,...,3} {
  \filldraw (0,6 - \i * 3) circle (5pt);
  \filldraw (10,6 - \i * 3) circle (5pt);
  \tikzmath{
    \j = \i + 3;
  }
  \filldraw (5, 2 - \i * 1 ) circle (3pt);
}
\node[anchor=east]  at (-1,3) {\Small $f(v_1)=a$};
\node[anchor=east] at (-1,0) {\Small $f(v_2)=b$};
\node[anchor=east] at (-1,-3) {\Small $f(v_3)=a$};
\node[anchor=west] at (11,3) {\Small $f(v_4)=a$};
\node[anchor=west] at (11,0) {\Small $f(v_5)=b$};
\node[anchor=west] at (11,-3) {\Small $f(v_6)=a$};
\draw (0,3) to (0,-3);
\draw (10,3) to (10,-3);
\filldraw (5,5) circle (5pt);
\node at (5,6.5) {\Small $f(v_7)=c$};
\filldraw (5,-5) circle (5pt);
\node at (5,-6.5) {\Small $f(v_n)=c$};
\draw (0,0) to (5,5);
\draw (10,0) to (5,5);
\draw (0,0) to (5,-5);
\draw (10,0) to (5,-5);
\filldraw (5,3) circle (5pt);
\filldraw (5,-3) circle (5pt);
\draw (0,0) to (5,3);
\draw (10,0) to (5,3);
\draw (0,0) to (5,-3);
\draw (10,0) to (5,-3);
\end{tikzpicture}
$$
The equation $\Delta_G f=\lambda f$ for $f$ as above requires, at 
$v_1,v_2,v_7$ respectively:
\begin{align*}
a-b & = \lambda a, \\
-2a +(j+2) b -jc & = \lambda b, \\
-2b +2c & = \lambda c .
\end{align*}
Since this also includes the solution $\lambda=0$ and $a=b=c=1$,
we can choose $a,b,c$ to make $f$ orthogonal to $\mec 1$, and therefore
impose
$$
4a+2b+jc = 0.
$$
Adding this to the middle equation, the first two equations read:
\begin{align*}
a-b & = \lambda a, \\
2a +(j+4) b & = \lambda b, \\
\end{align*}
which amounts to
$$
q(\lambda)=\lambda^2-(5+j)\lambda+(j+6) = 0.
$$
Since $q(1)=2>0$ and $q(2)=4-(5+j)2+(j+6)=-j<0$ (and $q(\lambda)\to\infty$
as $\lambda\to\infty$), one solution satisfies $1<\lambda<2$ and
the other $\lambda>2$.
\item
Similarly,
the functions
$$
\begin{tikzpicture}[scale=0.20]
\node at (-22,0) {$\forall a_1+\cdots+a_j=0$,};
\tikzmath{ 
  int \j ;
}
\foreach \i in {1,...,3} {
  \filldraw (0,6 - \i * 3) circle (5pt);
  \filldraw (10,6 - \i * 3) circle (5pt);
  \tikzmath{
    \j = \i + 3;
  }
  \filldraw (5, 2 - \i * 1 ) circle (3pt);
}
\node[anchor=east]  at (-1,3) {\Small $f(v_1)=0$};
\node[anchor=east] at (-1,0) {\Small $f(v_2)=0$};
\node[anchor=east] at (-1,-3) {\Small $f(v_3)=0$};
\node[anchor=west] at (11,3) {\Small $f(v_4)=0$};
\node[anchor=west] at (11,0) {\Small $f(v_5)=0$};
\node[anchor=west] at (11,-3) {\Small $f(v_6)=0$};
\draw (0,3) to (0,-3);
\draw (10,3) to (10,-3);
\filldraw (5,5) circle (5pt);
\node at (5,6.5) {\Small $f(v_7)=a_1$};
\filldraw (5,-5) circle (5pt);
\node at (5,-6.5) {\Small $f(v_n)=a_j$};
\draw (0,0) to (5,5);
\draw (10,0) to (5,5);
\draw (0,0) to (5,-5);
\draw (10,0) to (5,-5);
\filldraw (5,3) circle (5pt);
\filldraw (5,-3) circle (5pt);
\draw (0,0) to (5,3);
\draw (10,0) to (5,3);
\draw (0,0) to (5,-3);
\draw (10,0) to (5,-3);
\end{tikzpicture}
$$
each give an eigenfunction with eigenvalue $2$.  Hence the
corresponding eigenspace has dimension $j-1=n-7$ (hence $2$ isn't an
eigenvalue in the case $n=7$).
\end{enumerate}
The reader can easily check that this process has produced 
five subspaces of functions (counting~(4) as a two-dimensional space,
having imposed $f$ is orthogonal to $\mec 1$), each of which is
orthogonal to all the others, and whose sums of dimensions is $n$.

\begin{remark}
At the risk of ``overkill,'' we have
$$
q(\lambda) - p(\lambda) = 6 - 2 \lambda,
$$
and it follows that at the larger root of $p(\lambda)$, which satisfies
$\lambda>3$ (since $p(3)<0$ as per above), $q$ at this root is negative;
hence the larger root of $q$ is larger than the larger root of $p$.
Moreover, one easily see that for $j\to\infty$ the roots of $p(\lambda)$
are
$$
\lambda = 1 - 2/j + O(1/j^2)= 1 - 2/n + O(1/n^2),
\quad
\lambda = j+ 2 + 2/j + O(1/j^2) = n-4 + 2/n + O(1/n^2),
$$
and those of $q(\lambda)$ are
$$
\lambda = 1 + 2/j + O(1/j^2)= 1 + 2/n + O(1/n^2),
\quad
\lambda = j+ 4 - 2/j + O(1/j^2) = n-2 - 2/n + O(1/n^2).
$$
\end{remark}

\subsection{Details of the Eigenvalue $1$ for a Path with Two Left Ends}
\label{su_path_with_two_left_ends_details}

In this subsection we outline a proof of the spectral results claimed in
Subsection~\ref{su_path_with_two_left_ends_details}.
So let $G_n$ be the $n$-vertex path with two left ends, as in
Subsection~\ref{su_path_with_two_left_ends}.

The diagrams in Subsection~\ref{su_path_with_two_left_ends}
visibly demonstrates a two-dimensional eigenspace for $\lambda=1$
when $n=3m+1$, and a one-dimensional eigenspace when $n=3m,3m-1$.
Hence $1$ is an Laplacian eigenvalue of $G_n$ of multiplicity at
least $2$ for $n=3m+1$ and multiplicity at least $1$.
Hence it suffices to prove that respective upper bounds on
the multiplicity of $1$ as a Laplacian eigenvalue of $G$ and to
find their indices as an eigenvalue.

By discarding one of the edges at one of the split ends of $G_n$ we get
a graph, $G'$, with two connected components: one that is an isolated vertex,
the other that is a path of $n-1$ vertices:
$$
\begin{tikzpicture}[scale=0.20]
\node at (-8,0) {\Large\bf $\bf G'$:};
\foreach \i in {3,...,5} {
  \filldraw (4*\i-8,0) circle (5pt);
  \node at (4*\i-8,-1.5) {\Small $v_\i$};
}
\filldraw (0,2) circle (5pt);
\filldraw (0,-2) circle (5pt);
\node [anchor=east] at (-1,2) {\Small $v_1$};
\node [anchor=east] at (-1,-2) {\Small $v_2$};
\draw (0,2) to (4,0);
\draw (4,0) to (14,0);
\node at (16,0) {$\cdots$};
\node at (20,-1.5) {\Small $v_n$};
\draw (18,0) to (20,0);
\filldraw (20,0) circle (5pt);
\end{tikzpicture}
$$
The path on $n-1$ vertices, $P_{n-1}$,
is easily seen to have Laplacian eigenvalues $\lambda_1,\ldots,\lambda_{n-1}$
where
\begin{equation}\label{eq_lambda_j_for_path_eigenvalues}
\lambda_j\bigl(P_{n-1}\bigr) 
= 2 - 2 \cos\bigl(\pi (j-1)/(n-1) \bigr)
\end{equation} 
(and eigenfunction $\cos\bigl(\pi (j-1)(x-1/2)/(n-1)\bigr)$ with 
$x=1,\ldots,n-1$
being the vertices of the path).  Note that these eigenvalues are
strictly increasing as $j$ increases, and that
$$
\lambda_j(P_{n-1})=1 
\quad\iff\quad 
\pi(j-1)/(n-1)=\pi/3 
\quad\iff\quad 
j-1=(n-1)/3;
$$
hence also
$$
\lambda_j(P_{n-1})<1
\quad\iff\quad 
j-1<(n-1)/3
$$
and similarly with ``$<$'' replaced in both occurrences with ``$>$''.
It follows that:
\begin{enumerate}
\item
for $n=3m+1$, we have 
\begin{equation}\label{eq_n_equals_three_m_plus_one_path_eigs}
\lambda_{(n-1)/3}(P_{n-1})=\lambda_m(P_{n-1})=1
\end{equation}
(and $\lambda_j<1$ if $j<m$ and $\lambda_j>1$ if $j>m$); and
\item
for $n=3m$ and $n=3m-1$, we have (for all $j\in\integers$)
$j-1<(n-1)/3$ iff $j\le m$ (and otherwise $j-1>(n-1)/3$) and hence
\begin{equation}\label{eq_n_not_equals_three_m_plus_one_path_eigs}
\lambda_{m}(P_{n-1})<1<\lambda_{m+1}(P_{n-1}) .
\end{equation}
\end{enumerate}

Since $G'$ is $P_{n-1}$ plus an isolated vertex, its Laplacian 
has eigenvalues:
$$
\lambda_1(G')=0,\ \lambda_2(G')=\lambda_1(P_{n-1})=0<\cdots<
\lambda_n(G')=\lambda_{n-1}(P_{n-1}) .
$$
Since the Laplacian for $G$ is that of $G'$ plus a rank one 
positive semidefinite update
(for the single added edge), the min-max and max-min principle implies
\begin{equation}\label{eq_eigenvalue_intertwining}
0 \le \lambda_1(G_n) \le \lambda_1(P_{n-1}) \le \lambda_2(G_n) \le
\lambda_2(P_{n-1}) \le \cdots \le \lambda_{n-1}(P_{n-1})\le \lambda_n(G_n).
\end{equation} 
It follows that:
\begin{enumerate}
\item
for $n=3m$ and $n=3m-1$, 
\eqref{eq_eigenvalue_intertwining} 
and \eqref{eq_n_not_equals_three_m_plus_one_path_eigs} implies that 
$1$ is an eigenvalue
of $G_n$ of multiplicity
at most $1$, and if so then $1=\lambda_{m+1}(G_n)$;
\item
for $n=3m+1$, 
\eqref{eq_eigenvalue_intertwining}
and \eqref{eq_n_equals_three_m_plus_one_path_eigs}
implies that 
$1$ is a Laplacian eigenvalue
of $G_n$ of multiplicity
at most $2$, and if so then $1=\lambda_{m+1}(G_n)=\lambda_{m+2}(G_n)$.
\end{enumerate}
But these upper bounds on the multplicity of $1$ as a Laplacian eigenvalue
of $G_n$ are matched by the lower bounds mentioned at the beginning
of this section.  Hence we get equality for all $n$, and the
items above also locate the index of $1$ as an eigenvalue of $G_n$.


\newcommand{\etalchar}[1]{$^{#1}$}
\providecommand{\bysame}{\leavevmode\hbox to3em{\hrulefill}\thinspace}
\providecommand{\MR}{\relax\ifhmode\unskip\space\fi MR }
\providecommand{\MRhref}[2]{%
  \href{http://www.ams.org/mathscinet-getitem?mr=#1}{#2}
}
\providecommand{\href}[2]{#2}

\end{document}